\newtheorem{theorem}{Theorem}[section]
\newtheorem{lemma}[theorem]{Lemma}
\newtheorem{proposition}[theorem]{Proposition}
\newtheorem{corollary}[theorem]{Corollary}
\numberwithin{equation}{theorem}
\theoremstyle{plain}
\newtheorem*{theoremA}{Theorem A}
\newtheorem*{theoremB}{Theorem B}
\newtheorem*{theoremC}{Theorem C}
\theoremstyle{definition}
\newtheorem{dfn}[theorem]{Definition}
\newtheorem{definition}[theorem]{Definition}
\newtheorem{defi}[theorem]{Definition}
\newtheorem*{dfn-plain}{Definition}
\theoremstyle{remark}
\newtheorem{ntn}[theorem]{Notation}
\newtheorem{rem}[theorem]{Remark}
\newtheorem{remark}[theorem]{Remark}
\newtheorem{exm}[theorem]{Example}
\newtheorem{convention}[theorem]{Convention}
\newtheorem{example}[theorem]{Example}
\newtheorem*{rem-plain}{Remark}
\DeclareMathOperator{\reg}{reg}
\DeclareMathOperator{\Spec}{Spec}
\DeclareMathOperator{\trdeg}{trdeg}
\DeclareMathOperator{\Pic}{Pic}
\DeclareMathOperator{\codim}{codim}
\DeclareMathOperator{\rank}{rank}
\DeclareMathOperator{\ord}{ord}
\DeclareMathOperator{\lex}{lex}
\DeclareMathOperator{\Exc}{Exc}
\DeclareMathOperator{\res}{res}
\DeclareMathOperator{\cone}{cone}
\DeclareMathOperator{\Supp}{Supp}
\DeclareMathOperator{\cent}{centre}
\DeclareMathOperator{\rrk}{rat.rank}
\def\rd#1.{\lfloor{#1}\rfloor}
\def\rp#1.{\lceil{#1}\rceil}
\newcommand{\N}{\mathbb N}
\newcommand{\Z}{\mathbb Z}
\newcommand{\Q}{\ensuremath{\mathbb Q}}
\newcommand{\R}{\mathbb R}
\renewcommand{\P}{\mathbb P}
\renewcommand{\O}{\mathscr O}
\renewcommand{\phi}{\varphi}
\renewcommand{\theta}{\vartheta}
\newcommand{\id}{\mathrm{id}}
\newcommand{\Bs}{\mathrm{Bs}}
\newcommand{\Rees}{\mathrm{Rees}}
\newcommand{\sO}{\mathscr{O}}
\DeclareMathOperator{\supp}{supp}
\newcommand{\M}{\mathrm M}
\newcommand{\GL}{\mathrm{GL}}
\newcommand{\SL}{\mathrm{SL}}
\DeclareMathOperator{\ddiv}{div}
\newcommand{\tY}{\widetilde{Y}}
\numberwithin{equation}{theorem}
\DeclareRobustCommand{\SkipTocEntry}[5]{}
\newcommand{\iref}[3]{\the\value{#1}.\the\value{#2}(\the\value{#3})}
\renewcommand{\tilde}{\widetilde}
\newcommand{\sOX}{\ensuremath{\mathscr{O}^{}_{\! X}}}
\newcommand{\tildeX}{\ensuremath{\tilde{X}}}
\newcommand{\tX}{\tildeX}
\DeclareMathOperator{\Eff}{Eff}
\DeclareMathOperator{\Amp}{Amp}
\DeclareMathOperator{\Nef}{Nef}
\renewcommand{\P}{\ensuremath{\mathbb P}}
\newcommand{\NN}{\ensuremath{\mathbb N}}
\newcommand{\PP}{\ensuremath{\mathbb P}}
\newcommand{\QQ}{\ensuremath{\mathbb Q}}
\newcommand{\RR}{\ensuremath{\mathbb R}}
\newcommand{\ZZ}{\ensuremath{\mathbb Z}}
\newcommand{\OO}{\ensuremath{\mathcal O}}
\newcommand\lra{\longrightarrow}
\newcommand{\HH}[3]{\ensuremath{H^{#1}\left(#2,#3\right)}}
\newcommand{\vol}[2]{\ensuremath{{\rm vol}_{#1}( #2 ) } }
\newcommand{\st}[1]{\ensuremath{ \left\{ #1 \right\} }}
\newcommand{\deq}{\ensuremath{ \stackrel{\textrm{def}}{=}}}
\newcommand\ybul{Y_{\bullet}}
\def\.{\cdot}
\def\^{\widehat}
\def\~{\widetilde}
\newcommand\newop[2]{\def#1{\mathop{\rm #2}\nolimits}}
\newop\voll{vol}
\newop\NS{NS}
\newop\Neg{Neg}
\newop\Null{Null}
\newop\Pic{Pic}
\newop\Bstab{B_{+}}
\newop\Bst{B_{stab}}
\newop\Bres{B_{restr}}
\newop\Bplus{\mathbf{B}_+}
\newop\Bminus{\mathbf{B}_-}
\newop\Exc{Exc}
\newop\B{\mathbf{B}{}}
\newop\Bs{Bs}
\newop\End{End}
\newop\Amp{Amp}
\newop\Face{Face}
\newop\BigCone{Big}
\newop\index{ind}
\newop\reg{reg}
\newcommand{\equ}{\ensuremath{\,=\,}}
\newcommand{\dgeq}{\ensuremath{\,\geq\,}}
\newcommand{\dsubseteq}{\ensuremath{\,\subseteq\,}}
\newop\Gal{Gal}
\newcommand{\elx}{\ensuremath{\varepsilon(L;x)}}
\newcommand{\mlx}{\ensuremath{\mu(L;x)}}
\newcommand{\ilx}{\ensuremath{\iota(L;x)}}
\newcommand{\wht}[1]{\ensuremath{\widehat{#1}}}
\newcommand{\xh}{\wht{X}}
\newcommand{\lh}{\wht{L}}
\newcommand{\yh}{\wht{Y}}
\newcommand{\dht}{\wht{\Delta}}
\newcommand{\dyl}{\Delta_{\ybul}(L)}
\newcommand{\sigh}{\wht{s}}
\newcommand{\xqedhere}[2]{%
  \rlap{\hbox to#1{\hfil\llap{\ensuremath{#2}}}}}
\title{Concave transforms of filtrations and rationality of Seshadri constants}
\author{Alex K\"uronya}
\address{Alex K\"uronya, Institut f\"ur Mathematik, Goethe-Universit\"at Frankfurt, Robert-Mayer-Str. 6-10., D-60325 Frankfurt am Main, Germany}
\address{BME TTK Matematika Int\'ezet Algebra Tansz\'ek, Egry J\'ozsef u. 1., H-1111 Budapest, Hungary}
\email{\tt kuronya@math.uni-frankfurt.de}
\author{Catriona Maclean}
\address{Institut Fourier, Universit\'e Grenoble Alpes,  CS 40700,  38058 Grenoble cedex 9, France}
\email{\tt catriona.maclean@univ-grenoble-alpes.fr}           
\author{Joaquim Ro\'e}
\address{Departament de Matem\'atiques,	Facultat de Ci\'encies, C1/346, Universitat Auton\'oma de Barcelona 08193 Bellaterra (Barcelona) Spain}
\email{\tt jroe@mat.uab.cat}
\begin{document}

\begin{abstract}
	We show that the subgraph of the concave transform of a multiplicative  filtration on a section ring is the Newton--Okounkov body of a certain semigroup, and if the filtration is induced by a divisorial valuation, then the associated graded algebra is the algebra of sections of a concrete line bundle in higher dimension. We use this description to give a rationality criterion for certain Seshadri constants. Along the way we introduce Newton--Okounkov bodies of abstract graded semigroups and determine conditions for their slices to be Newton--Okounkov bodies of subsemigroups.
\end{abstract}

\maketitle

\section{Introduction}

\subsection{Background and motivation}	
This paper deals with Newton--Okounkov bodies and the associated  concave transforms of  
multiplicative filtrations.
 On the one hand  in Chapter 2 we generalise the theory of Newton--Okounkov bodies on subsemigroups of $\Z^n$ to the more general settings of subsemigroups of $\Z\times\R^n$ and abstract graded semigroups. 
 On the other, in Chapters 3 and 4 we use these generalizations to show that the subgraph of the concave transform on a Newton--Okounkov body of a filtration is itself the Newton--Okounkov body of an explicit semigroup and an explicit algebra, which in the case of a divisorial valuation is the Newton--Okounkov body of a concrete line bundle in higher dimension. Finally we use this description to give a rationality criterion for certain Seshadri constants. 
	
In the domain of algebraic geometry, Newton--Okounkov bodies are convex bodies associated to subalgebras of rational function fields of algebraic varieties. They  arose as a way of understanding asymptotic behaviour of lattice semigroups, and have become by now a near standard tool in the asymptotic theory of linear series on projective varieties with applications in arithmetic geometry, combinatorics, Diophantine approximation, mirror symmetry, and representation theory (for a sampling of applications the reader is invited to consult \cite{FFL,HK,RW} for instance). These  Newton--Okounkov bodies were defined by Kaveh--Khovanskii \cite{KK12} and Lazarsfeld--Musta\c{t}\u{a} \cite{LM09}, with 
both works building on earlier results of Okounkov \cite{Ok1,Ok2}. For the fundamentals of the theory, the reader can consult the original works, but also the expository papers \cite{Bou14,KL_Geom}.

\subsection{Main results}
Inspired by Boucksom's proposal to define Newton--Okounkov bodies with respect to valuations of maximal rational rank (which essentially consists in \emph{reembedding} the value group as a subgroup of $\Z^n$) and by Boucksom--Chen's construction of \emph{filtered} Newton--Okounkov bodies (which do not relate to a subsemigroup of a finitely generated group) in  Section 2 we will construct the Newton--Okounkov body of a graded cancellative torsion-free semigroup. In the case of a subsemigroup of $\Z^n$, it is well-known that the growth of its Hilbert function is governed by the volume of its Newton--Okounkov body; we generalize this result to the abstract setting as follows:

\begin{theoremA}[Theorem~\ref{thm:nob-semigroup-general}]
	Let $\Sigma$ be a graded, cancellative, torsion free semigroup, let $\Sigma_\Z$ bethe minimal abelian group containing $\Sigma$, and denote $H_\Sigma$ the Hilbert function of $\Sigma$.
	\begin{enumerate}
		\item If $\Sigma_\Z$ is finitely generated then $\rank\Sigma=n<\infty$, and
		\begin{itemize}
			\item if $\Sigma$ is linearly bounded, $H_\Sigma(d)=\vol{}{\Delta(\Sigma)}\,d^{n-1}+o(d^{n-1}),$
			\item otherwise, $\vol{}{\Delta(\Sigma)}=\lim(H_\Sigma(d)/d^{n-1})=\infty$.
		\end{itemize}
		\item If $\Sigma_\Z$ is not finitely generated and $\rank\Sigma=n<\infty$, then $\lim(H_\Sigma(d)/d^{n-1})=\infty$.
		\item If $\rank\Sigma=\infty$, then for every natural $n$, $\lim(H_\Sigma(d)/d^{n-1})=\infty$, i.e., the growth rate of $H_\Sigma$ is not polynomial.
	\end{enumerate} 
\end{theoremA}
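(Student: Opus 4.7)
The plan is to reduce each case to the classical situation of a finitely generated graded subsemigroup of $\Z^n$, for which the Khovanskii--Kaveh asymptotic applies, and to exhaust $\Sigma$ from within by such subsemigroups when $\Sigma_\Z$ is not itself finitely generated. After normalising the grading so that $\pi\colon\Sigma_\Z\twoheadrightarrow\Z$ is surjective, the kernel $\ker\pi$ has rank $\rank\Sigma-1$, and $\Delta(\Sigma)$ lives in $\ker\pi\otimes\R$.

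For part (1), since $\Sigma_\Z$ is finitely generated torsion free of rank $n$, splitting the grading yields $\Sigma_\Z\simeq\Z\oplus\Z^{n-1}$, and $\Sigma$ becomes an ordinary graded subsemigroup of $\N\times\Z^{n-1}$. In the linearly bounded case, $\Delta(\Sigma)\subset\R^{n-1}$ is bounded convex, and the classical Khovanskii--Kaveh theorem delivers the stated asymptotic. If $\Sigma$ is not linearly bounded, I would exhaust $\Delta(\Sigma)$ from inside: given $M>0$, pick finitely many rational points $p_i=v_i/d_i\in\Delta(\Sigma)$ whose convex hull is full-dimensional of volume $\geq M$, and let $\Sigma'=\langle(d_i,v_i)\rangle\subseteq\Sigma$. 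Because Newton--Okounkov bodies are closed convex, $\Delta(\Sigma')=\mathrm{conv}(p_1,\ldots,p_k)$; applying the bounded case to $\Sigma'$ yields $\liminf H_\Sigma(d)/d^{n-1}\geq\vl{\Delta(\Sigma')}\geq M$, and letting $M\to\infty$ concludes.

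For part (2), I would choose a strictly ascending chain of finitely generated subgroups $G_1\subsetneq G_2\subsetneq\cdots\subset\Sigma_\Z$ with $\bigcup_k G_k=\Sigma_\Z$, each of rank $n$, and with $G_1$ already containing $n$ elements of $\Sigma$ that are $\Q$-linearly independent. Each $\Sigma^{(k)}:=\Sigma\cap G_k$ then has rank $n$, and applying part (1) to $\Sigma^{(k)}$ yields either immediate super-polynomial growth, or the asymptotic $H_{\Sigma^{(k)}}(d)=V_k d^{n-1}+o(d^{n-1})$ with $V_k=\vol{G_k}{\Delta(\Sigma^{(k)})}>0$. The crucial point is that $G_{k+1}\cap\ker\pi$ strictly refines $G_k\cap\ker\pi$ as a rank-$(n-1)$ lattice, so $e_k:=[G_{k+1}\cap\ker\pi:G_k\cap\ker\pi]\geq 2$; combined with $\Delta(\Sigma^{(k)})\subseteq\Delta(\Sigma^{(k+1)})$ and the fact that the $G_{k+1}$-volume of a fixed body is $e_k$ times its $G_k$-volume, this forces $V_{k+1}\geq e_k V_k$, so $V_k\to\infty$. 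Since $H_\Sigma(d)\geq H_{\Sigma^{(k)}}(d)$, we conclude $\lim H_\Sigma(d)/d^{n-1}=\infty$.

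For part (3), given any $n\in\N$, use $\rank\Sigma=\infty$ to pick $n+1$ elements of $\Sigma$ whose images in $\Sigma_\Z\otimes\Q$ are linearly independent, let $G\subset\Sigma_\Z$ be a finitely generated rank-$(n+1)$ subgroup containing them, and set $\Sigma'=\Sigma\cap G$. Then $\Sigma'$ has rank $n+1$, so part (1) applied to $\Sigma'$ yields Hilbert function growth of order $d^n$, whence $H_\Sigma(d)/d^{n-1}\geq H_{\Sigma'}(d)/d^{n-1}\to\infty$. The main technical difficulty is part (2): one must verify that the $G_k$-normalised volume used when reducing to part (1) is compatible with the intrinsic Newton--Okounkov body constructed abstractly in Section~2, and that the chain $\{G_k\}$ can genuinely be chosen so the indices $e_k$ remain nontrivial at every step---this is precisely the kind of information the abstract graded semigroup framework is designed to supply.
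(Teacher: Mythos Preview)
Your strategy matches the paper's: part (1) reduces to Kaveh--Khovanskii, part (2) exhausts $\Sigma$ by subsemigroups whose generated lattices form a strictly ascending chain so that the index-doubling forces $H_\Sigma(d)/d^{n-1}\to\infty$, and part (3) follows from (1) applied to a subsemigroup of higher rank. The paper's execution is slightly cleaner in two places: for the unbounded case of (1) it simply invokes Kaveh--Khovanskii (Theorem~\ref{thm:volNOB_semigroup_KK}, which already covers that case) rather than your inner-exhaustion argument, where one must be careful that the asymptotic for $\Sigma'$ involves the volume normalized to $(\Sigma')_\Z$ and not the ambient Euclidean volume of $\mathrm{conv}(p_i)$; and for (2) it works with \emph{finitely generated} subsemigroups $\Sigma_i\subset\Sigma$ directly rather than with $\Sigma\cap G_k$, so that the relevant lattice is $(\Sigma_i)_\Z$ by construction and the compatibility issue you flag at the end does not arise.
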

\noindent
The application we have in mind for Newton--Okounkov bodies of semigroups $\Sigma$ such that $\Sigma_\Z$ is not finitely generated is to give a unified approach that encompasses filtered Newton--Okounkov bodies. This type of body was introduced by Boucksom and Chen in \cite{BC} with an ad-hoc construction that builds the convex body from slices, which are themselves bodies of semigroups which \emph{do embed} into finitely generated groups.
With our approach, filtered Newton--Okounkov bodies are regular Newton--Okounkov bodies (see Subsection \ref{sec:ct-via-graded-linear-series}), and we prove that their slices are, under a technical hypothesis (asymptotic convexity, \ref{def:asymptotically-convex}) Newton--Okounkov bodies of \emph{restricted subsemigroups}.
Such subsemigroups were already considered by 
Lazarsfeld--Musta\c{t}\u{a} in the particular case when $\Sigma$ is the value semigroup of a graded linear series under a valuation of maximal rank.
In that case, the restricted semigroups have a geometric meaning, corresponding to restricted linear series.
As a consequence of our work we obtain an integral formula (Corollary \ref{cor:volume-and-slices}, in terms of restricted subsemigroups) for the volume of every asymptotically convex semigroup $\Sigma$, widely generalizing the one for filtered Newton--Okounkov bodies given in \cite{BC}.
The relationship between the global Newton--Okounkov body of a variety \cite[Theorem 4.5]{LM09} and the bodies of individual divisor classes is also a case of slicing with respect to restricted subsemigroups (Remark \ref{rem:global}).
We refer for notation and further details to Subsections 2.A and 2.B.

Let us summarise what is known about the convex geometry of Newton--Okounkov bodies arising in algebraic geometry. 
We know that the Newton--Okounkov bodies of full linear series are always polygons in dimension two \cite{KLM,AKL}, and  that they are not polyhedral in higher dimensions in general \cite{KLM}  unless some  strong finite generation condition is present \cite{AKL} (see also  \cite{PU}). On the other hand, any convex set can appear as the Newton--Okounkov body of a graded linear series \cite{LM09}. It has been conjectured that every line bundle possesses a Newton--Okounkov body which is a semi-algebraic set \cite{KLM_volume,KL_Geom}. 

Let $X$ be a projective variety of dimension $n$ over an algebraically closed field $K$.  To a line bundle  $L$ on $X$ and  a 
full flag of subvarieties $\ybul$ subject to some mild nondegeneracy conditions one can associate the appropriate Newton--Okounkov body $\dyl\subseteq \RR^n$. If $L$ is big  then the resulting convex body  (as a subset of $\RR^n$) only depends on the numerical equivalence class of $L$. Conversely, the association $\ybul \to \dyl$ yields a universal numerical invariant for big line bundles \cite{Jow}.  Following this train of thought the authors of \cite{KL_noninf,KL_inf,KL_Reider} and \cite{Roe} studied the local positivity of line bundles. 

Although most of the development for projective varieties focuses on big  divisors, the papers \cite{CHPW,CHPW2} extended many of the results to the 
pseudo-effective case. It should be noted that in these papers, as in the definition of the global Newton--Okounkov bodies in 
\cite{LM09}, the definition  of the Newton--Okounkov body of a non-big divisor  necessarily differs from that given in the big case, which does not generally
give a numerical invariant in the non-big case. The Newton Okounkov body of a non-big pseudo-effective divisor $D$ is therefore defined as the limit of the 
Newton--Okounkov bodies of $D+\epsilon A$ for positive $\epsilon$ and ample $A$. Note that the construction of Newton--Okounkov bodies as a function of numerical equivalence classes is not continuous in general as one approaches the boundary of the pseudo-effective cone.   
For any Mori dream space $X$, Postinghel and Urbinati in  \cite{PU} find a flag on $X$  with respect to which the global Newton--Okounkov body $\Delta_{\ybul}(X)$ over 
$\overline{\Eff}(X)$ is rational polyhedral. 		
 
The Newton--Okounkov body of a line bundle can be seen as a generalisation to arbitrary varieties of the toric polytope of a line bundle on a toric variety.
An analogue of the moment map on these polytopes --- the concave transform of a multiplicative filtration on the section ring --- was introduced by Boucksom--Chen \cite{BC} and independently by  Witt-Nystr\"om \cite{WN} and further studied in \cite{BKMS} (see also \cite{KMSwB}). 

Multiplicative filtrations on sections rings arise naturally in various ways. One immediate example is to consider the order of vanishing along a smooth subvariety. 
One of our main results, Theorem C,  links knowledge about the order of vanishing filtration to the rationality of Seshadri constants and hence to the 
conjectures of Nagata and Segre--Harbourne--Gimigliano--Hirschowitz (cf. \cite{DKMS}).
 
Donaldson's test configurations \cite{Don} are another source of  multiplicative filtrations \cite{RT1,RT2,Szek,WN}. 
Donaldson \cite{Don} studies the link between K-stability and constant scalar curvature metrics on toric surfaces and proves a weaker version of the
Donaldson-Tian conjecture: a key ingredient of this work is the use of a toric polytope of a line bundle of a toric threefold whose rational points encode, amongst other things, the Futaki invariant\footnote{Donaldson  in fact constructs this toric polytope in all dimensions.}
In \cite{WN} this  polytope is re-interpreted as the graph of a concave transform on a  Newton--Okounkov body of a multiplicative filtration arising from the test 
configuration, which enables Witt-Nystr\"om to generalise Donaldson's toric polytope  construction to arbitrary varieties.

In Chapter 4 we show that in the case of a divisorial multiplicative filtration the subgraph of the concave transform is again a Newton-Okoukov body in higher dimension. 

\begin{theoremB}[Theorem~\ref{thm:integrals are volumes}]
Let $X$ be a projective variety, $L$ a big line bundle on $X$,  $v$ a valuation  of maximal rational rank $n=\dim X$, $w$ a divisorial valuation on $K(X)$. Then there exists a projective variety $\xh$ of dimension $n+1$, a valuation of maximal rational rank $\widehat{v}$ on $\xh$, and a big  line bundle $\lh$ on $\xh$ such that the subgraph of the function $w\colon \Delta_{v}(L)\to \RR_{\geq}$  arising from $w$ equals the Newton--Okounkov body $\Delta_{\widehat{v}}(\lh)$. 	
\end{theoremB}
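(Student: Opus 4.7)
\textbf{Reduction.} Since $w$ is divisorial, there exists a smooth projective birational model $\pi\colon\widetilde X\to X$ and a prime divisor $E\subset\widetilde X$ with $w=c\cdot\ord_E$ for some $c>0$. The assignment $(x,t)\mapsto(x,t/c)$ carries the subgraph of $\ord_E$ to that of $w$, and such a linear change of coordinates can be absorbed into a rescaling of $\widehat v$ (or equivalently, replacing $\widehat L$ by a rational multiple on a suitable compactification). So I will work throughout with $w=\ord_E$ and treat the final rescaling as cosmetic.

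\textbf{Construction of $\widehat X$ and $\widehat L$.} Let $p\colon\widehat X:=\PP_{\widetilde X}\bigl(\calo_{\widetilde X}\oplus\calo_{\widetilde X}(-E)\bigr)\longrightarrow\widetilde X$; this is a projective variety of dimension $n+1$. Since $L$ is big, the slope $\alpha:=\sup_{m,s\neq0}\frac{w(s)}{m}$ is finite and rational. Choose an integer $N\geq\alpha$ and set
\[
\widehat L\;:=\;\calo_{\widehat X}(N)\otimes p^{*}\pi^{*}L.
\]
By the projection formula and the identification $p_{*}\calo_{\widehat X}(kN)=\bigoplus_{i=0}^{kN}\calo_{\widetilde X}(-iE)$,
\[
H^{0}\bigl(\widehat X,\,k\widehat L\bigr)\;=\;\bigoplus_{i=0}^{kN} H^{0}\bigl(\widetilde X,\pi^{*}kL-iE\bigr)\;=\;\bigoplus_{i\geq 0}\calf_{w}^{i}H^{0}(X,kL),
\]
the Rees algebra of the $w$-adic filtration on $R(X,L)$ (the upper bound $kN$ is ineffective by the choice of $N$). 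The growth rate is $h^{0}(\widehat X,k\widehat L)\sim k^{n+1}\!\int_{\Delta_v(L)}\!G_w$, so $\widehat L$ is big.

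\textbf{Valuation $\widehat v$.} Let $F_0\subset\widehat X$ be the section of $p$ corresponding to the surjection $\calo\oplus\calo(-E)\twoheadrightarrow\calo(-E)$, so $F_0\cong\widetilde X$ is a prime divisor. The complement $\widehat X\setminus F_0$ is affine over $\widetilde X$ with a fiber coordinate $T$, giving $K(\widehat X)=K(\widetilde X)(T)$. Define $\widehat v$ on $K(\widehat X)$ as the composite valuation that first reads off the $T$-adic order $i_0:=\ord_{F_0}(f)$, then extracts the leading coefficient $f_{i_0}\in K(\widetilde X)=K(X)$, then applies $v$ to it:
\[
\widehat v(f)\;:=\;\bigl(i_0,\,v(f_{i_0})\bigr)\in\ZZ\times\ZZ^{n}\quad(\text{lex order}).
\]
This is a rank-$(n+1)$, rational-rank-$(n+1)$ valuation with value group $\ZZ^{n+1}$.

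\textbf{Semigroup and NOB identification.} For a nonzero $s=\sum_{i}s_i T^{i}\in H^{0}(\widehat X,k\widehat L)$ with $s_{i_0}$ its first nonzero $T$-coefficient, we have $s_{i_0}\in\calf^{i_0}_{w}H^{0}(X,kL)\setminus\{0\}$ and $\widehat v(s)=(i_0,v(s_{i_0}))$. Conversely, for every $\sigma\in\calf^{i}_{w}H^{0}(X,kL)\setminus\{0\}$, the section $\sigma T^{i}$ realises the value $(i,v(\sigma))$. Hence the graded value semigroup of $(\widehat X,\widehat L,\widehat v)$ is
\[
\widehat\Sigma\;=\;\bigl\{(k,i,v(\sigma))\;:\;k\geq 0,\ i\geq 0,\ \sigma\in\calf^{i}_{w}H^{0}(X,kL)\setminus\{0\}\bigr\}.
\]
Applying the semigroup-theoretic definition of the Newton--Okounkov body from Section~2 (in particular Theorem~A and the slicing/restriction machinery) to $\widehat\Sigma$ yields
\[
\Delta_{\widehat v}(\widehat L)\;=\;\overline{\Bigl\{\bigl(i/k,\,v(\sigma)/k\bigr):(k,i,v(\sigma))\in\widehat\Sigma\Bigr\}}\;=\;\bigl\{(t,x):x\in\Delta_v(L),\,0\leq t\leq G_w(x)\bigr\},
\]
which is the subgraph of the concave transform of $w$ on $\Delta_v(L)$. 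Rescaling the first coordinate by $c$ recovers the general divisorial case.

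\textbf{Main obstacle.} The delicate point is the last equality, i.e.\ checking that the closure of the rescaled semigroup equals the subgraph of $G_w$ rather than some proper subset. This requires (i) verifying that $\widehat\Sigma$ satisfies the hypotheses of the abstract NOB theory of Section~2 (cancellative, torsion-free, generating a lattice of rank $n+1$, and the asymptotic convexity / linear boundedness needed for Theorem~A), and (ii) matching the slice at fixed $i/k=t$ with $\Delta_v\bigl(\calf^{tk}_{w}H^{0}(X,kL)\bigr)$, the very data from which $G_w$ is defined by concavification. This is where the results on restricted subsemigroups and the unified description of filtered NOBs from Section~2 do the bulk of the work; choosing a flag on $\widehat X$ compatible with $\widehat v$ (starting from $F_0$ and pulling back a flag adapted to $v$ on $\widetilde X$) is what translates this semigroup equality into the claimed NOB equality.
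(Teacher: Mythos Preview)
Your construction of $\widehat X$, $\widehat L$, and $\widehat v$ coincides with the paper's up to notation: the paper also takes the $\PP^1$-bundle $\PP_X(\calo\oplus\calo(D))$ (isomorphic to your $\PP(\calo\oplus\calo(-E))$ after twisting by $\calo(E)$) and sets $\widehat L=\pi^*L+bX_1$, which under the projection formula has exactly the Rees-algebra global sections you write down. There is, however, a mislabeling in your valuation: with $x$ generating the summand $\calo$ and $y$ the summand $\calo(-E)$, your $F_0$ (from the quotient $\to\calo(-E)$) is $\{x=0\}$, and $\ord_{F_0}(x^{N-i}y^i)=N-i$, not $i$. The ``$T$-adic order'' you actually use (with $T=y/x$ on the chart $x\neq 0$) is $\ord_{\{y=0\}}$, i.e.\ the order along the \emph{other} section, the one coming from $\calo\oplus\calo(-E)\to\calo$; this is precisely the paper's $X_2$. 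With that correction the value-semigroup identification $\widehat\Sigma=\Sigma_{v,F,0}$ goes through.

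Where your argument genuinely diverges from the paper is in establishing the equality $\Delta_{\widehat v}(\widehat L)=\text{subgraph}$. You use the projection formula to identify $\bigoplus_k H^0(\widehat X,k\widehat L)$ with the bounded Rees algebra, and then appeal to the Section~3 machinery (asymptotic convexity of $\Sigma_{v,F,0}$, Proposition~3.16 and its corollaries, and Proposition~3.19) to conclude that the Newton--Okounkov body of this semigroup is the filtered body $\widehat\Delta(L,F)_0$. The paper argues directly and does not invoke Section~3 at all: it writes the composite valuation on a section $\widehat s$ as $(\ord_{X_2}\widehat s,\,v(\widehat s_1))$ with $\widehat s_1$ the residue along $X_2$, reduces to showing that the restriction map $H^0(\widehat X,d(\widehat L-\alpha X_2))\to H^0(X_2,d(L-\alpha D))$ is surjective for all relevant $\alpha$, and proves this surjectivity by exploiting the disjoint section $X_1$. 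Namely $\widehat L-\alpha X_2=\pi^*(L-\alpha D)+(b-\alpha)X_1$ with $(b-\alpha)X_1$ effective and $X_1\cap X_2=\varnothing$, so pullback from $X$ followed by multiplication by the canonical section of $(b-\alpha)X_1$ gives a right inverse to restriction. Your route is cleaner once Section~3 is available; the paper's is self-contained and makes the geometry of the two disjoint sections do the work that you delegate to the abstract semigroup theory.
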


As an application of this result we give a sufficient condition for the 
rationality of Seshadri constants on surfaces. The question of the rationality 
of Seshadri constant has been present ever since  they were first defined by Demailly in \cite{Dem92}. The paper \cite{DKMS} 
added to the significance of the issue by proving that rationality of Seshadri 
constants on certain surfaces would disprove that  Segre--Harbourne--Gimigliano--Hirschowitz conjecture. 

Even though several asymptotic invariants of line bundles turned out to be 
rational in dimension two, this is far from clear for Seshadri constants. We 
use the above theorem to link rationality of volumes on threefolds to 
rationality of
Seshadri constants. This is a territory where not much is known: some volumes 
on threefolds are irrational \cite{Cut,Laz04I} (cf. \cite{KLM_volume} as well).

\begin{theoremC}[Corollaries~\ref{cor:fg implies Sehsadri rtl} and \ref{cor:rationality of Seshadri}]
Let $X$ be a smooth projective surface, $x\in X$, and let $L$ be an ample line bundle on $X$. Let $\tilde{X}$ be the blow-up of $X$ at $x$. 
\begin{enumerate}
	\item There exists a $\PP^1$-bundle $\xh$ over $\tilde{X}$ and a big line bundle $\lh$ on $\xh$ such that $\elx$ is rational provided $\vol{\xh}{\lh}$ is. In particular, this holds if 
$R(\xh,\lh)$ is finitely generated.
	\item 
	If there exists a positive integer $b$ satsifying $\mlx < b < \epsilon(L-K_X;x)-2$, then 
	$\elx\in \QQ$. 
\end{enumerate}
\end{theoremC}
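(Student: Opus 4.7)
The plan is to apply Theorem B with the divisorial valuation $w=\ord_E$ along the exceptional divisor $E\subset\tilde X$ of $\pi\colon\tilde X\to X$, together with a maximal rational rank valuation $v=(v_1,v_2)$ on $K(X)=K(\tilde X)$ whose first coordinate equals $w$; concretely, take the flag valuation associated to $(E,p)$ for a general point $p\in E$. Because $v_1=w$, the concave transform $G_w\colon\Delta_v(L)\to\RR_{\ge 0}$ is just the first coordinate $\alpha\mapsto\alpha_1$, so its subgraph is the three-dimensional prism
\[
\widehat\Delta \;=\; \{(\alpha_1,\alpha_2,s)\in\RR^3 : (\alpha_1,\alpha_2)\in\Delta_v(L),\ 0\le s\le\alpha_1\},
\]
and Theorem B produces a big line bundle $\lh$ on a projective threefold $\xh$ with $\Delta_{\widehat v}(\lh)=\widehat\Delta$. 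In the divisorial case, the construction in the proof of Theorem B proceeds via the Proj of the Rees algebra of $w$, which naturally exhibits $\xh$ as a $\PP^1$-bundle over $\tilde X$, as required.

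For part (1), it remains to translate rationality of $\mathrm{vol}(\lh)$ into rationality of $\elx$. The strategy is to compute
\[
\mathrm{vol}(\lh) \;=\; 3!\,\mathrm{vol}(\widehat\Delta) \;=\; 6\int_{\Delta_v(L)}\alpha_1\,d\alpha_1\,d\alpha_2
\]
using the Lazarsfeld--Musta\c t\u a description of the slices $\Delta_v(L)\cap\{\alpha_1=t\}$ in terms of the Zariski decomposition of $\pi^*L-tE$ on the smooth projective surface $\tilde X$. All Zariski-chamber breakpoints of this family are intersection-theoretic on $\tilde X$ and hence rational, with the sole possible exception of the first one, $t=\elx$. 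Chamber-by-chamber integration therefore expresses $\mathrm{vol}(\lh)$ as a polynomial in $\elx$ with rational coefficients, the leading contribution being the cubic $L^2\elx-\elx^3/3$ coming from the initial chamber. Combined with the integrality of the Chern classes of $\lh$, which constrains the admissible Galois orbits of $\elx$ consistent with the polynomial identity, this forces $\elx\in\QQ$ whenever $\mathrm{vol}(\lh)$ is rational. The parenthetical assertion is immediate since finite generation of $R(\xh,\lh)$ implies $\mathrm{vol}(\lh)\in\QQ$ by the standard asymptotic Riemann--Roch argument for big line bundles with finitely generated section rings.

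For (2), the plan is to verify that the stated hypotheses force $R(\xh,\lh)$ to be finitely generated, reducing the claim to the parenthetical case of (1). The upper bound $b<\varepsilon(L-K_X;x)-2$ is the surface case of a Demailly-style jet-separation hypothesis: for $m\gg 0$ it guarantees surjectivity of the evaluation maps $H^0(mL)\surj H^0(mL\otimes\calo_x/\mf m_x^{bm+1})$ and, more pertinently, surjectivity of the multiplication maps on the Rees algebra of $w$ in filtration degrees $\le b$. The complementary bound $\mlx<b$ ensures $F^{bm+1}H^0(mL)=0$ for $m\gg 0$, so the filtration on $R(X,L)$ is asymptotically bounded. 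Together these statements imply that the Rees algebra of $w$ is generated in filtration degrees $\le b$ and hence is finitely generated as an algebra; the explicit Proj-construction of $\xh$ and $\lh$ in Theorem B then transfers finite generation to $R(\xh,\lh)$.

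The principal obstacle is the rationality step of (1): one must carefully track how $\elx$ enters the piecewise-polynomial expression for $\mathrm{vol}(\lh)$ through all the chamber integrations, and show that the integer-class constraint on $\lh$ precludes any irrational solution, a delicate point given that a rational polynomial identity alone only forces $\elx$ to be algebraic. A secondary difficulty, in (2), is upgrading the finite-level jet-separation statements coming from the Demailly bound into genuine finite generation of the Rees algebra, which will require a uniform Noetherian argument in the bigraded setting.
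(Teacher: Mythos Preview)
Your proposal has genuine gaps in both parts, each stemming from missing a key simplifying idea that the paper exploits.

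\textbf{Part (1).} Your chamber-by-chamber integration strategy is unnecessary and, as you yourself note, insufficient: a rational polynomial identity in $\elx$ only makes $\elx$ algebraic, and your appeal to ``integrality of Chern classes'' and ``Galois orbits'' is too vague to close this gap. The paper's argument bypasses all of this via a single clean observation (Remark~\ref{rmk:KLM}, from \cite{KLM}): on a smooth projective \emph{surface}, if $\elx$ is irrational then automatically $\elx=\mlx$ (and hence $\elx=\sqrt{(L^2)}$). This collapses the Zariski decomposition of $\pi^*L-tE$ to the trivial one on the \emph{entire} interval $[0,\mlx]$, so there is only one chamber. Proposition~\ref{prop:integral estimate} then gives the \emph{equality}
\[
\ilx \;=\; \frac{\elx^{3}}{3!\,(L^2)} \;=\; \frac{\elx}{6},
\]
the last step using $\elx^2=(L^2)$. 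Thus irrationality of $\elx$ forces irrationality of $\ilx=\tfrac{1}{6}\vol{\xh}{\lh}$ directly, with no polynomial-equation argument needed.

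\textbf{Part (2).} Your plan to extract finite generation of the Rees algebra from jet-separation statements is speculative; the ``uniform Noetherian argument in the bigraded setting'' you flag is not a routine step, and it is unclear how the bound $b<\epsilon(L-K_X;x)-2$ would yield the multiplication-map surjectivity you need. The paper's route is entirely different and much more direct: one computes
\[
\lh-K_{\xh}\;=\;(b+2)\,\xi+\pi^*\bigl(\eta^*(L-K_X)-(b+2)E\bigr)
\]
on the $\PP^1$-bundle $\xh=\PP_{\tX}(\sO_{\tX}\oplus\sO_{\tX}(E))$, and uses the explicit description of $\Nef(\xh)$ (Remark~\ref{rmk:nef cone of projective bundle}) to see that the hypothesis $b+2<\epsilon(L-K_X;x)$ is exactly what makes $\lh-K_{\xh}$ big and nef. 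Finite generation of $R(\xh,\lh)$ then follows from the basepoint-free theorem / finite generation of adjoint rings \cite{KKM,BCHM,CL}, and one concludes by part (1). The role of the numerical bound is thus adjunction-theoretic, not jet-theoretic.
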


\subsection{Organization of the article}
The article has arguably a somewhat expository flavour at places. Part of the time we treat material that is not far from the existing literature, nevertheless, we believe that our more general framework and slightly different point of view justifies our approach. 

This being said, Section 2 is devoted to the more abstract part of the paper dealing with the construction of Newton--Okounkov bodies of abstract semigroups and adjusting the results of Kaveh--Khovanskii to our setting. In the later subsections we discuss the case of ordered semigroups, and with it, the role of valuations, which leads to a detailed discussion of the construction of Newton--Okounkov bodies of line bundles on projective varieties. In Section 3 we describe concave transforms and their relationship with Rees algebras of filtrations. Finally, Section 4 hosts the explicit demonstration that subgraphs of concave transforms of multiplicative filtrations are in fact Newton--Okounkov bodies of line bundles in dimension one higher, and an application of this fact to the rationality of Seshadri constants. 
 
\subsection{Notation and conventions.}
All groups and semigroups in this paper are commutative and written in additive notation. All rings are commutative with identity. When working with varieties, we will be doing so over an arbitrary algebraically closed field except in Subsection 4.B. Large parts of the algebro-geometric material in the paper work for varieties over an arbitrary field, but we do not pursue minimal hypotheses in this direction.

\subsection*{Acknowledgements} We are grateful to  Christian Haase, Vlad Lazi\'c, Victor Lozovanu,  Matthias Nickel, Mike Roth  and  Lena Walter for helpful discussions. The second author was partially supported by ERC grant ALKAGE. The first and third authors gratefully acknowledge partial  support from the LOEWE Research Unit 'Uniformized Structures in Arithmetic and Geometry', and the Mineco Grant No. MTM2016-75980-P,  while the first author also enjoyed partial support from the NKFI Grant No. 115288 'Algebra and Algorithms', and the third also from AGAUR 2017SGR585. Our  project was initiated during the workshop 'Newton--Okounkov Bodies, Test Configurations, and Diophantine Geometry' at the Banff International Research Station. We  appreciate the stimulating atmosphere and the excellent working conditions at BIRS.

\section{Convex objects associated to semigroups and filtrations}
\label{sec:convex-objects-semigroups} 
Our purpose here  is to define Newton--Okounkov bodies for graded cancellative torsion-free semigroups. 
These are the abstract semigroups which can be embedded in $\R^n$; we extend the construction of Kaveh and Khovanskii \cite{KK12}, which works for semigroups $\Sigma$ embedded in $\Z^n\subset\R^n$ such that the generated abelian group $\Sigma_\Z$ equals $\Z^n$,  by 
\begin{enumerate*}[label=\itshape\alph*\upshape)]
	\item allowing arbitrary groups $\Sigma_\Z$, and
	\item showing independence from reembeddings, as long as these are \emph{full} (a technical condition essentially meaning that the $\R$-linear span of $\Sigma$ is of maximal dimension).
\end{enumerate*}
In Theorem \ref{thm:nob-semigroup-general} we establish that the Newton--Okounkov bodies obtained this way satisfy some of the most important properties proven by Kaveh--Khovanskii in \cite{KK12}; indeed, if $\Sigma_\Z$ is finitely generated then the volume of the body governs the growth rate of the semigroup. 
When $\Sigma_\Z$ is not finitely generated, the body and its volume are still important invariants of the semigroup, and in fact they are our main tool to approach concave transforms of filtrations in section \ref{sec:concave-transforms}.

The Newton--Okounkov bodies of restricted linear series introduced by Lazarsfeld--Musta\c{t}\u{a} in \cite{LM09} turn out to have an underlying semigroup-theoretic base; 
in subsection \ref{sec:abstract-volume} we introduce restricted semigroups of ordered semigroups, and give an integral formula for the volume in terms of restricted volumes (Corollary \ref{cor:volume-and-slices}). 

We study in greater detail Newton--Okounkov bodies of \emph{ordered} semigroups, such as those obtained from valuations and filtrations, which are the most relevant in algebraic geometry. 
In that case the some of the restricted semigroups, and accordingly some \emph{slices} of the Newton--Okounkov bodies are also invariants of the semigroup.
The section concludes by introducing, in our setting, the Newton--Okounkov bodies of line bundles in projective varieties.
\subsection{Newton--Okounkov sets and slices}\label{sec:sub-Rn}

We start by recalling the definition of Newton--Okounkov bodies of semigroups as introduced by Kaveh-Khovanskii.
To the best of our knowledge, previous work on Newton--Okounkov bodies of semigroups requires that these are subsemigroups of some lattice in a finite-dimensional vector space.
For the purposes of this paper we need to relax this hypothesis and allow arbitrary subsemigroups of finite-dimensional vector spaces. 
Therefore, even though our presentation follows the spirit of Boucksom \cite{Bou14} and Kaveh-Khovanskii \cite{KK12}, we will allow this added generality from the beginning.

We introduce the notions of \emph{asymptotically convex semigroup} and \emph{restricted semigroups}, which will play a key role in our interpretation of the \emph{slices} of Newton--Okounkov bodies appearing in \cite{LM09}, \cite{Bou14}, \cite{BC}.
The main result in this subsection is Theorem \ref{thm:restricted_body_semigrp}, in which we prove that slices of the Newton--Okounkov body of an asymptotically convex semigroup are Newton--Okounkov bodies of restricted semigroup.

\begin{dfn} A \emph{grading} on a group $\Gamma$ (resp. a semigroup $\Sigma$) is a homomorphism $\deg:\Gamma\rightarrow \Z$ (resp. $\Sigma\rightarrow \Z$). 
	We assume throughout that gradings are surjective.
	An \emph{embedding} of semigroups is an injective homomorphism.
	A \emph{graded group} (resp. \emph{semigroup})
	is a group (resp. a semigroup) with a fixed grading.

\end{dfn}
\begin{ntn}
	Let $V$ be a finite-dimensional real vector space, and $X\subset V$ a subset.
	The subsemigroup (respectively, subgroup, linear span and $\Q$-linear span) of $V$ generated by $X$ will be denoted $\langle X\rangle_\N$ (respectively, $\langle X\rangle_\Z$, $\langle X\rangle_\R$, $\langle X\rangle_\Q$).
	The convex cone generated by $X$ will be denoted by 
	$\cone(X)=\{a_1x_1+\dots+a_nx_n|a_i\ge0, x_i\in X \}$, and the topological closure by $\overline{X}$.
\end{ntn}

\begin{dfn}	
	Let $V$ be a finite-dimensional real vector space, and $\Sigma$ a graded subsemigroup of $V$.
	$\Sigma$ is said to be \emph{linearly graded} if the grading map $\deg:\Sigma\rightarrow\Z$
	extends to a surjective linear form $\deg:V\rightarrow\R$.
	In this case we use the notation $L_d$ for the hyperplane
	$\{x\in V | \deg(x)=d\}$.
	If $V=\R^r$ and $\deg(x_1,\dots,x_r)=x_1$ then we say that $\Sigma$ is graded by first component.
		
	A linearly graded subsemigroup $\Sigma$ of $V$ is said to be 
	\emph{linearly bounded} if there is a basis $v_1,\dots,v_n$ of $V$ formed by vectors of positive degree such that $\Sigma$ is contained in the positive orthant $\cone(v_1,\dots,v_r)$.
\end{dfn}

\begin{dfn}
	The \emph{Newton--Okounkov set} $\Delta(\Sigma)$ 
	of a linearly graded subsemigroup $\Sigma\subset V$ is the topological closure
	\[ \Delta(\Sigma)=\overline{\left\{\left.
		\frac{\sigma}{\deg(\sigma)}
		\,\right|\, \sigma \in \Sigma \setminus \deg^{-1}(\{0\})\right\}} \subset 
	L_1.\]
	Note that if $\Sigma$ is linearly bounded then 0 is the only element of degree 0 in $\Sigma$. 
	It is not hard to see that $\Delta(\Sigma)$ is convex, and if $\Sigma$ is linearly bounded then $\Delta(\Sigma)$ is compact; more precisely, there is an equality
	\[ \Delta(\Sigma)= \overline{\cone(\Sigma)}\cap L_1.\]
	Moreover, $\Delta(\Sigma)$ has nonempty interior if and only if the linear span $\langle\Sigma\rangle_\R$ is $V$. 
	
	If $\Sigma$ is linearly bounded and $\langle\Sigma\rangle_\R=V$, then $\Delta(\Sigma)$ is called the Newton--Okounkov \emph{body} of $\Sigma$.
\end{dfn}

The following result, originating in Khovanskii's work \cite{Kho95}, underlies much of the theory of Newton--Okounkov bodies. 
It tells us that, if the group $\langle\Sigma\rangle_\Z$ is a lattice in $V$, then $\cone(\Sigma)\cap \langle\Sigma\rangle_\Z$ is asymptotically (i.e., for large degrees) a good approximation of $\Sigma$.

\begin{theorem}[{\cite[Théorème 1.3]{Bou14}}, {\cite[Theorem 1.6]{KK12}}]\label{thm:approx_semigroup}
	Let $\Sigma\subset V$ be a linearly graded subsemigroup such that $\langle\Sigma\rangle_\Z$ is a lattice in $\langle\Sigma\rangle_\R$. 
	Let $C\subset \overline{\cone(\Sigma)}$ be a closed strongly convex cone that intersects the boundary of $\overline{\cone(\Sigma)}$ only at the origin.
	Then there is a constant $N>0$ such that each $\gamma\in C\cap \langle\Sigma\rangle_\Z$ with $\deg(\gamma)\ge N$ belongs to $\Sigma$.
\end{theorem}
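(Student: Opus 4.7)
The plan is to adapt Khovanskii's approximation argument to the slightly more general setting in which $\langle\Sigma\rangle_\Z$ need not be embedded as a full-rank lattice in the ambient $V$. I would first replace $V$ by $\langle\Sigma\rangle_\R$ so that $\Lambda:=\langle\Sigma\rangle_\Z$ becomes a full-rank lattice there; the cone $C$ lies inside $\overline{\cone(\Sigma)}\subset\langle\Sigma\rangle_\R$, so this loses nothing. The hypotheses on $C$ imply that $C\cap L_1$ is compact (any degree-zero ray of $C$ would lie on the boundary of $\overline{\cone(\Sigma)}$, hence would have to be $\{0\}$) and that $C\setminus\{0\}\subset\intt(\overline{\cone(\Sigma)})$. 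Since interior points of a convex cone are exactly the strictly positive combinations of $\R$-linearly independent elements of a generating set, each $v\in C\cap L_1$ lies in the relative interior of a simplicial subcone $K_v=\cone(\sigma_1^{(v)},\dots,\sigma_n^{(v)})$ with vertices in $\Sigma$. By compactness a finite subfamily $K_1,\dots,K_s$ suffices, and by slightly shrinking it I obtain compact subsets $\overline{U_j}\subset\mathrm{relint}(K_j)\cap L_1$ still covering $C\cap L_1$. Continuity of the coordinate functions in the basis $\sigma_i^{(j)}$ then produces a constant $c_j>0$ such that every degree-one $v\in\overline{U_j}$ has all of its coordinates bounded below by $c_j$.

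The second step builds the correction data. For each $j$ let $\Lambda_j:=\Z\sigma_1^{(j)}+\cdots+\Z\sigma_n^{(j)}\subset\Lambda$; this is a sublattice of finite index because $\sigma_1^{(j)},\dots,\sigma_n^{(j)}$ is an $\R$-basis of $V$. The composition $\Sigma\to\Lambda\to\Lambda/\Lambda_j$ is a semigroup homomorphism into a finite group; its image is thus a subsemigroup of a finite group, hence a subgroup, and since $\Sigma$ generates $\Lambda$ as a group that image must be all of $\Lambda/\Lambda_j$. Consequently every coset $\tau\in\Lambda/\Lambda_j$ contains a representative $t_\tau\in\Sigma$. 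Expanding $t_\tau=\sum_i c_i^{(j,\tau)}\sigma_i^{(j)}$ in the simplicial basis yields finitely many real numbers, and I set $M_j:=\max_{\tau,i}|c_i^{(j,\tau)}|$.

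The proof then concludes by an integer-part argument. Set $N:=\max_j M_j/c_j$ and let $\gamma\in C\cap\Lambda$ with $\deg(\gamma)\ge N$. Pick $j$ with $\gamma/\deg(\gamma)\in\overline{U_j}$ and expand uniquely $\gamma=\sum_i r_i\sigma_i^{(j)}$; scaling from $\overline{U_j}$ yields $r_i\ge c_j\deg(\gamma)\ge M_j$ for every $i$. Letting $\tau$ be the coset of $\gamma$ modulo $\Lambda_j$, we have $\gamma-t_\tau\in\Lambda_j$ and therefore $\gamma=t_\tau+\sum_i b_i\sigma_i^{(j)}$ with $b_i\in\Z$. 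Comparing with the basis expansion forces $b_i=r_i-c_i^{(j,\tau)}\ge r_i-M_j\ge0$. Since $t_\tau$ and each $\sigma_i^{(j)}$ belong to $\Sigma$, this exhibits $\gamma$ as a non-negative integer combination of elements of $\Sigma$, so $\gamma\in\Sigma$.

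The main technical obstacle is the geometric construction in the first paragraph: extracting a simplicial cover of $C\cap L_1$ whose compact pieces admit a uniform positive lower bound $c_j$ on basis coordinates. Strong convexity of $C$, together with the boundary hypothesis, is precisely what promotes the qualitative inclusion $C\setminus\{0\}\subset\intt(\overline{\cone(\Sigma)})$ to the quantitative linear estimate $r_i\ge c_j\deg(\gamma)$, and this in turn is what lets the correction terms $M_j$ produced by the group-theoretic step be absorbed. The role of the hypothesis that $\Sigma$ generates $\Lambda$ as a group is likewise isolated to a single point, namely the lifting of each coset of $\Lambda_j$ to $\Sigma$.
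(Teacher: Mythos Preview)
The paper does not prove this statement; it is quoted from \cite{Bou14} and \cite{KK12}. Your argument is exactly the standard Khovanskii approximation proof found in those references: a finite simplicial cover of the compact slice $C\cap L_1$ by cones with vertices in $\Sigma$, finite-index sublattices $\Lambda_j\subset\Lambda$ together with the observation that $\Sigma\to\Lambda/\Lambda_j$ is surjective so that every coset lifts to $\Sigma$, and the integer-part correction absorbing the bounded coset representatives. It is correct.

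One small point worth flagging: your justification that $C\cap L_1$ is compact (``any degree-zero ray of $C$ would lie on the boundary of $\overline{\cone(\Sigma)}$'') tacitly uses that $\overline{\cone(\Sigma)}\cap L_0\subset\partial\,\overline{\cone(\Sigma)}$, which amounts to $\overline{\cone(\Sigma)}\subset\{\deg\ge0\}$ or, equivalently, strong convexity of $\overline{\cone(\Sigma)}$. The cited sources state this hypothesis explicitly, and every application in the present paper is to linearly bounded semigroups where it holds, so the gap is harmless in context; but as the theorem is phrased here it is not literally part of the hypotheses.
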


In particular, if $\sigma\in\langle\Sigma\rangle_\Z$ belongs to the interior of  $\overline{\cone(\Sigma)}$, it follows by  applying the theorem to the ray $\cone(\sigma)$ that there is a multiple $k\sigma$ belonging to $\Sigma$.
Semigroups not included in a lattice often do not have this property, but if they do, then their Newton--Okounkov sets behave not unlike usual Newton--Okounkov bodies of lattice subsemigroups. 
Thus we make the following definition.

\begin{definition}\label{def:asymptotically-convex}
	A subsemigroup $\Sigma\subset V$ will be called \emph{asymptotically convex} if, for every $\sigma\in\langle\Sigma\rangle_\Z$ belonging to the interior of  $\overline{\cone(\Sigma)}$, there is a multiple $k\sigma$ belonging to $\Sigma$.
\end{definition}

\begin{ntn}
	Let $\Sigma\subset V$ be a linearly graded subsemigroup, and  $W\subset L_0$ a linear subspace. We denote $\Sigma/W$ the image of $\Sigma$ in $V/W$ by the natural projection $p_W:V\rightarrow V/W$. Because elements of $W$ have degree $0$, the grading descends, and $\Sigma/W$ is a linearly graded subsemigroup. 
	For every  $\sigma\in\Sigma$, we denote
	\[\Sigma|_{W+\sigma}=\Sigma\cap\langle W+\sigma\rangle_\R\]
	the \emph{restricted semigroup} determined by $W$ and $\sigma$. 
	
	We call a linear $\R$-subspace $W\subset V$ \emph{$\Sigma$-rational} if it can be generated by vectors in  $\langle\Sigma\rangle_\Z$.
\end{ntn}

If $\Sigma$ is asymptotically convex, the \emph{slices} of its Newton--Okounkov body in the direction of a $\Sigma$-rational subspace $W$ are Newton--Okounkov bodies, namely those of the restricted semigroups $\Sigma|_{W+\sigma}$:

\begin{theorem}\label{thm:restricted_body_semigrp}
	Let $\Sigma\subset V$ be an asymptotically convex, linearly graded, linearly bounded, subsemigroup, and $W\subset L_0$ a $\Sigma$-rational linear subspace.
	Denote $\overline{V}=V/W$, let $p:V\rightarrow \overline{V}$ be the projection, and for each $\mathbf{v}\in \overline{V}$, denote $L_\mathbf{v}$ the affine space $p^{-1}(\mathbf{v})\subset V$.
		
	\begin{enumerate}
		\item The image of $\Delta(\Sigma)\subset L_1$ by the projection $p$ is $\Delta(\Sigma/W)\subset \overline{L}_1$, where $\overline{L}_1$ denotes the hyperplane of vectors of degree 1 in $\overline{V}$.
		\item For every vector $\mathbf{v}\in\langle\Sigma\rangle_\Q$ in the relative interior of $\Delta(\Sigma/W)$, the slice $\Delta(\Sigma)\cap L_{\mathbf{v}}$  is the Newton--Okounkov set of the restricted semigroup $\Sigma|_{W+\sigma}$ for a suitable $\sigma \in \Sigma$.
	\end{enumerate}
	
\end{theorem}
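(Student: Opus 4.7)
\emph{Part (1).} The plan is to exploit that $p:V\to\overline{V}$ is a continuous linear map that respects degree, since $W\subset L_0$. First I would verify that $\Sigma/W$ is itself linearly graded and linearly bounded (a positive basis adapted to $W$ projects to a positive basis of $\overline V$), so that $\Delta(\Sigma/W)\subset\overline{L}_1$ is defined. Since $p$ commutes with the normalisation $\sigma\mapsto\sigma/\deg(\sigma)$, the dense generating set of $\Delta(\Sigma)$ maps exactly onto the dense generating set of $\Delta(\Sigma/W)$; linear boundedness makes $\Delta(\Sigma)$ compact, so $p(\Delta(\Sigma))$ is automatically closed, and therefore equal to $\Delta(\Sigma/W)$.

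\emph{Construction of $\sigma$.} Reading ``$\mathbf v\in\langle\Sigma\rangle_\Q$'' as $\mathbf v\in p(\langle\Sigma\rangle_\Q)=\langle\Sigma/W\rangle_\Q$, I would clear denominators to obtain $k>0$ and a lift $\sigma_0\in\langle\Sigma\rangle_\Z$ with $p(\sigma_0)=k\mathbf v$. A standard convex-geometric fact --- compatibility of relative interiors with surjective linear projections --- then gives that the affine fibre $\sigma_0+W$ meets the interior of $\overline{\cone(\Sigma)}$ (inside $\langle\Sigma\rangle_\R$) in a nonempty relatively open subset. Because $W$ is $\Sigma$-rational, $W\cap\langle\Sigma\rangle_\Z$ is a full-rank lattice in $W$, so $\sigma_0$ can be translated by an element of this lattice into that interior; asymptotic convexity then yields a positive multiple $\sigma\in\Sigma$, which by construction satisfies $p(\sigma)/\deg(\sigma)=\mathbf{v}$.

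\emph{The two inclusions.} The containment $\Delta(\Sigma|_{W+\sigma})\subseteq\Delta(\Sigma)\cap L_\mathbf{v}$ is immediate: any $\tau\in\Sigma|_{W+\sigma}$ lies in $\Sigma\cap(W+\R\sigma)$, so $\tau/\deg(\tau)\in L_\mathbf{v}$, and the closure step preserves this. For the reverse, by closedness of $\Delta(\Sigma|_{W+\sigma})$ it is enough to argue on the dense subset $\relint(\Delta(\Sigma)\cap L_\mathbf{v})\cap\langle\Sigma\rangle_\Q$. The same compatibility lemma shows that any such $x$ lies in $\relint\Delta(\Sigma)$, i.e.\ in the interior of $\overline{\cone(\Sigma)}$ viewed inside $\langle\Sigma\rangle_\R$; asymptotic convexity (after clearing denominators) produces $M>0$ with $Mx\in\Sigma$. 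Since $p(x)=\mathbf{v}=p(\sigma)/\deg(\sigma)$, the vector $x$ lies in $\sigma/\deg(\sigma)+W\subset\langle W+\sigma\rangle_\R$, whence $Mx\in\Sigma|_{W+\sigma}$ and $x\in\Delta(\Sigma|_{W+\sigma})$.

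\emph{Main obstacle.} The technical core is the convex-geometric lemma $\relint(C\cap p^{-1}(\mathbf v))\subseteq\relint C$ whenever $\mathbf v\in\relint p(C)$; I use it twice, first to place a lift of $\mathbf v$ in the interior of $\overline{\cone(\Sigma)}$ so that asymptotic convexity can manufacture $\sigma$, and then to move rational approximants of the slice into the same interior so that asymptotic convexity applies again. Everything else --- density of $\Q$-rational points in relative interiors, $\Sigma$-rationality of $W$ to keep integer lifts available at each step, and the compactness supplied by linear boundedness --- is essentially bookkeeping.
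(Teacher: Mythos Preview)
Your argument is correct and follows essentially the same route as the paper's: produce $\sigma$ via asymptotic convexity, then obtain the nontrivial inclusion by approximating points of the slice by $\langle\Sigma\rangle_\Q$-points in the relative interior and invoking asymptotic convexity again. The paper packages the reverse inclusion at the cone level (showing $\overline{\cone(\Sigma)}\cap\langle W+\sigma\rangle_\R\subseteq\overline{\cone(\Sigma|_{W+\sigma})}$) rather than at the body level, and one terminological caveat: $W\cap\langle\Sigma\rangle_\Z$ need not literally be a \emph{lattice} in the generality considered (only a subgroup spanning $W$), but your argument only uses the spanning property, so this is harmless.
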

\begin{proof}
	If $\Sigma\subset \langle W+\sigma\rangle_\R$ for some $\sigma\in\Sigma$ (and hence for all), then $\langle p(\Sigma)\rangle_\R$ is one-dimensional, $\Delta(\Sigma/W)$ is a single point, and the claims are obvious. 
	So we assume that $\Sigma\not\subset \langle W+\sigma\rangle_\R$ for all $\sigma\in\Sigma$
	
	The first claim is immediate from the definitions, observing that $p(L_1)=\overline{L}_1$.
	For the second claim, note that since $W$ is $\Sigma$-rational and $\Sigma$ is asymptotically convex, $\mathbf{v}$ belongs to the image of the map
	\begin{align*}
	\pi:\Sigma &\longrightarrow \Delta(\Sigma/W)\\
	\sigma &\longmapsto p(\sigma)/\deg(\sigma)
	\end{align*}
	Moreover, the fiber of $\pi$ 
	over $\mathbf{v}=\pi(\sigma) \in \pi(\Sigma)$ is exactly the restricted semigroup $\Sigma|_{W+\sigma}$.
	We will prove that, given an element $\sigma\in \Sigma$ such that $\mathbf{v}=\pi(\sigma)$ belongs to the interior of $\Delta(\Sigma)$, the equality 
	\[\Delta(\Sigma)\cap p^{-1}(\mathbf{v})=\Delta(\Sigma|_{W+\sigma})\]
	holds.
	 
	Consider the linear space $H=\langle W+\sigma\rangle_\R=p^{-1}(\langle \mathbf{v}\rangle_\R)$. As $W$ is $\Sigma$-rational, we have $H=\langle\Sigma|_{W+\sigma}\rangle_\R$.
	Let $C_{W,\sigma}=\overline{\cone(\Sigma|_{W+\sigma})}\subset H\subset V$, and $C=\overline{\cone(\Sigma)}$, so that $\Delta(\Sigma|_{W+\sigma})=C_{W,\sigma} \cap L_1$ and
	$\Delta(\Sigma)=C \cap L_1$.
	Obviously $C_{W,\sigma}\subseteq C \cap H$, therefore the claim will follow by proving the reverse inclusion $C\cap H \subset C_{W,\sigma}$.
	Since $W$ is $\Sigma$-rational, $C\cap H \subset\overline{C \cap H\cap \langle\Sigma\rangle_\Z}$, and since $\Sigma$ is asymptotically convex, $C \cap H\cap \langle\Sigma\rangle_\Z\subset \overline{\cone(\Sigma\cap H)}$. Now the claim follows.
\end{proof}

\begin{remark}
	Note that the restricted semigroup $\Sigma|_{W+\sigma}$ is linearly bounded, and it follows from the theorem that if $p(\sigma)/\deg(\sigma)$ belongs to the relative interior of $\Delta(\Sigma/W)$ then the linear span of $\Sigma|_{W+\sigma}$ is $\langle W+\sigma\rangle_\R$.
	So the Newton--Okounkov set of $\Sigma|_{W+\sigma}$ is a body in $\langle W+\sigma\rangle_\R$, and we call it the \emph{restricted Newton--Okounkov body} of $\Sigma|_{W+\sigma}$. 
\end{remark}

\begin{remark}\label{rmk:brunn-minkowski} 
	Now assume that the vector space $V$ is endowed with a volume form (for instance, $V=\R^n$ with the Euclidean form). 
Then $\vol{}{\Delta(\Sigma)}$ is an invariant of $\Sigma$, which is especially important when $\langle\Sigma\rangle_\Z$ is a finitely generated group, as will be shown in the next subsection.
Theorem \ref{thm:restricted_body_semigrp} has strong consequences on how the volumes $\vol{}{\Delta(\Sigma|_{W+\sigma})}$ depend on $\sigma\in \Sigma$.
Namely, the map $\sigma\mapsto \vol{}{\Delta(\Sigma|_{W+\sigma})}^{1/\dim W}$ factors  as the composition of two maps
\begin{align*}
\Sigma\setminus\{0\} & \rightarrow \Delta(\Sigma/W)& \Delta(\Sigma/W) &\rightarrow \R\\
\sigma& \mapsto \frac{p(\sigma)}{\deg(\sigma)}&
\mathbf{v}& \mapsto \varphi(\mathbf{v})
\end{align*}
with $\phi$ upper semicontinuous, concave (by the Brunn-Minkowski inequality) and continuous in the interior of $\Delta(\Sigma/W)$.
\end{remark}

\subsection{Newton--Okounkov bodies of abstract semigroups. Volume and Hilbert function}
\label{sec:abstract-volume}
One of the principal results in the theory of Newton--Okounkov bodies is Theorem \ref{thm:volNOB_semigroup_KK} below, which relates the volume of the body $\Delta(\Sigma)$ with the growth rate of the Hilbert function of $\Sigma\subset V$ when the generated subgroup is a lattice.
However, the Hilbert function only depends on the abstract graded semigroup $\Sigma\rightarrow\Z$, not on its being embedded in a vector space. 
In this section we study the dependence of the Newton--Okounkov bodies of a given graded semigroup $\Sigma$ on the choice of embeddings in real vector spaces.
We show that the Newton--Okounkov body determined by a \emph{full embedding} (Definition \ref{def:full-embedding}) is essentially independent of the particular choice of embedding, which will allow us to define the Newton--Okounkov body of an abstract graded cancellative and torsion-free semigroup (see Lemma \ref{lem:sl_invariance} below).

We show that such bodies determine the rate of growth of $H_\Sigma$ if and only if $\Sigma_\Z$ is finitely generated (Theorem \ref{thm:nob-semigroup-general}), and we also investigate the interplay of the volume of the Newton--Okounkov body with the rate of growth of restricted subsemigroups, obtaining an integral formula for the volume in the case of an embedded semigropup $\Sigma\subset V$, which does not need $\Sigma_\Z$ to be finitely generated and generalizes the Boucksom--Chen formula \cite[Corollary 1.13]{BC} for filtered Newton--Okounkov bodies.

\begin{definition}
	Let $\Sigma$ be a graded semigroup.
	For each $d\ge 0$, denote $\Sigma_d=\{\sigma\in\Sigma|\deg(\sigma)=d\}$ and $H_\Sigma(d)=|\Sigma_d|\in\N\cup\{\infty\}$. 
	$H_\Sigma$ is called the \emph{Hilbert function} of $\Sigma$.
	If $\Sigma$ is a linearly graded, linearly bounded, subsemigroup of a real vector space $V$ of finite dimension, and $\langle\Sigma\rangle_\Z$ is finitely generated, then $H_\Sigma(d)$ is finite for all $d$.
\end{definition} 
\begin{theorem}[Kaveh-Khovanskii, 
	see {\cite[Corollary 1.16 and Theorem 1.18]{KK12}}
	or {\cite[Théorème 1.12 and Corollaire 1.14]{Bou14}}]\label{thm:volNOB_semigroup_KK}
	Let $V$ be a real $r$-dimensional vector space endowed with a volume form.
	Let $\Sigma\subset V$ be a linearly graded subsemigroup such that $\langle\Sigma\rangle_\Z$ is a lattice. 
	Denote $\det^1(\Sigma)$ the determinant of the lattice $\langle\Sigma\rangle_Z\cap L_1$ with respect to the volume form induced on $L_1$.
	If $\Sigma$ is linearly bounded then
	\[H_\Sigma(d)=\frac{\vol{}{\Delta(\Sigma)}}{\det^1(\Sigma)}d^{r-1}+o(d^{r-1}).\]
	If $\Sigma$ is not linearly bounded then $\Delta(\Sigma)$
	has infinite volume and $\lim H_\Sigma(d)/d^{r-1}=\infty$.
\end{theorem}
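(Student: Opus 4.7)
The plan is to deduce the statement from Khovanskii's approximation theorem (Theorem~\ref{thm:approx_semigroup}) combined with standard lattice-point counting in dilates of convex bodies. First I would fix the grading linear form $\deg:V\to\R$, the hyperplane $L_0=\ker(\deg)$, and a degree-$1$ lattice point $\sigma_0\in\langle\Sigma\rangle_\Z$ (which exists by surjectivity of the grading). Then $\Lambda:=\langle\Sigma\rangle_\Z\cap L_0$ is a full-rank lattice in $L_0$, and $\langle\Sigma\rangle_\Z\cap L_d=d\sigma_0+\Lambda$ for every $d\in\Z$. The choice of volume form on $V$ induces one on $L_1$, and $\det^1(\Sigma)$ is just the covolume of $\Lambda$ in $L_0\cong L_1$.

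\smallskip
Assuming first that $\Sigma$ is linearly bounded, so that $\Delta(\Sigma)\subset L_1$ is compact, I would produce the asymptotic from above and below separately. For the lower bound, approximate $\Delta(\Sigma)$ from inside by an increasing sequence of compact convex bodies $K_n\subset\Delta(\Sigma)$ such that $\cone(K_n)$ is a strongly convex cone meeting $\partial\overline{\cone(\Sigma)}$ only at the origin; this is possible by shrinking $\Delta(\Sigma)$ slightly towards its interior (and is the first place where the full-dimensionality hypothesis matters, since otherwise one works inside $\langle\Sigma\rangle_\R$ and replaces the ambient dimension $r$). Theorem~\ref{thm:approx_semigroup} then guarantees a constant $N_n$ such that for $d\geq N_n$ every lattice point of $\langle\Sigma\rangle_\Z$ lying in $d\cdot K_n$ belongs to $\Sigma_d$, giving
\[
H_\Sigma(d)\;\geq\;\bigl|\,dK_n\cap(d\sigma_0+\Lambda)\,\bigr|.
\]
Standard lattice-point counting (a one-line Riemann-sum argument using that $K_n$ is a Jordan-measurable compact set in an affine hyperplane parallel to $L_0$) shows that the right hand side equals $\vol(K_n)/\det^1(\Sigma)\cdot d^{r-1}+o(d^{r-1})$. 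Letting $n\to\infty$ and using $\vol(K_n)\to\vol(\Delta(\Sigma))$ yields the lower bound.

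\smallskip
For the upper bound, every degree-$d$ element of $\Sigma$ lies in $d\cdot\Delta(\Sigma)$, hence
\[
H_\Sigma(d)\;\leq\;\bigl|\,d\,\Delta(\Sigma)\cap(d\sigma_0+\Lambda)\,\bigr|,
\]
and the same lattice-point counting gives $|d\,\Delta(\Sigma)\cap(d\sigma_0+\Lambda)|=\vol(\Delta(\Sigma))/\det^1(\Sigma)\cdot d^{r-1}+o(d^{r-1})$. Combined with the lower bound this proves the asymptotic in the linearly bounded case. The non-linearly-bounded case will be handled by observing that the failure of linear boundedness forces $\overline{\cone(\Sigma)}$ to contain a nonzero element of $L_0$, so that $\Delta(\Sigma)$ is an unbounded convex subset of $L_1$ of full relative dimension; it therefore contains compact convex subsets of arbitrarily large $(r-1)$-volume, in particular $\vol(\Delta(\Sigma))=\infty$. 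Applying the lower bound argument of the linearly bounded case to such subsets (after replacing $\Sigma$ with its intersection with an appropriate strongly convex subcone, to which Theorem~\ref{thm:approx_semigroup} applies) yields $\liminf H_\Sigma(d)/d^{r-1}\geq M$ for every $M>0$, i.e.\ $\lim H_\Sigma(d)/d^{r-1}=\infty$.

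\smallskip
The main obstacle I expect is the sharpness of the lattice-point count near the boundary of $\Delta(\Sigma)$: Khovanskii's theorem only controls lattice points in \emph{strongly} convex subcones, so one must interpose the auxiliary exhaustion $\{K_n\}$ and take a limit, rather than applying the approximation directly to $\Delta(\Sigma)$ itself. This is a standard but slightly delicate interchange of limits, and the cleanest way to organise it is to separate the upper-bound inclusion $\Sigma_d\subset d\Delta(\Sigma)\cap(d\sigma_0+\Lambda)$ (which is purely combinatorial) from the lower-bound application of Theorem~\ref{thm:approx_semigroup} to the approximants.
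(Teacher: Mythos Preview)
The paper does not give its own proof of this theorem: it is stated with attribution to Kaveh--Khovanskii and Boucksom (the cited \cite{KK12} and \cite{Bou14}) and used as a black box. Your proposal is correct and is essentially the standard argument one finds in those references, namely the combination of Khovanskii's approximation theorem (Theorem~\ref{thm:approx_semigroup}) with elementary lattice-point counting in dilates of a convex body, organised via an inner exhaustion to handle the boundary.
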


\noindent
Note that $\det^1(\Sigma)$ is simply the volume of the smallest parallelepiped in $L_1$ with vertices on $\langle\Sigma\rangle_\Z$.

We now fix a graded semigroup $\Sigma$, and study the Newton--Okounkov sets obtained from it by different embeddings.
The next lemma collects some elementary facts on embeddings which will be useful to describe the effect of reembedding on the Newton--Okounkov bodies. 	

\begin{lemma}\label{lem:sgrp-embeddings}
	\begin{enumerate}
		\item  	A semigroup $\Sigma$ can be embedded in a group if and only if it is cancellative ($\sigma_0+\sigma_2=\sigma_1+\sigma_2$ implies $\sigma_0=\sigma_1$).  $\Sigma$ can be embedded in a rational or real vector space if and only if it is cancellative and torsion free.
	\end{enumerate}
	\noindent\emph{In this case we denote $\Sigma_\Z$ the minimal abelian group containing $\Sigma$ (unique up to isomorphism).	The rational rank\footnote{To avoid confusion, later in the paper, with the rank of a valuation, which is the order rank of its value group, the rank of a group will always we called \emph{rational rank}.} of a cancellative semigroup $\Sigma$ is defined as $\rrk\Sigma\deq\rrk \Sigma_\Z\deq\dim_\Q (\Sigma_\Z \otimes_\Z \Q)$.}
	\begin{enumerate}[resume]
		\item 	Let $\Sigma$ be a cancellative semigroup of finite rational rank. There exists an embedding $\iota:\Sigma\hookrightarrow\R^r$ with $\langle\iota(\Sigma)\rangle_\R=\R^r$ if and only if $\Sigma$ is torsion free and $r \le \rrk \Sigma$.
		\item	If $\Sigma$ is a graded, cancellative, torsion free semigroup with finite rational rank, then for every embedding $\iota:\Sigma\hookrightarrow\R^{\rrk\Sigma}$ such that $\langle\iota(\Sigma)\rangle_\R=\R^{\rrk\Sigma}$,
		the grading on $\Sigma$ induces a linear grading on $\iota(\Sigma)$.
		\item	If $\Sigma$ is a graded, cancellative, torsion free semigroup with finite rational rank, and $\iota_1, \iota_2:\Sigma\rightarrow\R^{\rrk\Sigma}$ are two embeddings such that  $\langle\iota_i(\Sigma)\rangle_\R=\R^{\rrk\Sigma}$,
		then $\iota_1(\Sigma)$ is linearly bounded if and only if $\iota_2(\Sigma)$ is linearly bounded.
	\end{enumerate}
\end{lemma}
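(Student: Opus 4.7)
I would prove the four claims in turn, using the Grothendieck group construction together with basic linear algebra over $\Q$ and $\R$. For (1), the Grothendieck group $\Sigma_\Z$ of formal differences of elements of $\Sigma$ (under the equivalence $a-b = c-d$ when $a+d = b+c$) provides the minimal enveloping group of any cancellative $\Sigma$; cancellativity is exactly the condition needed for the natural map $\Sigma \to \Sigma_\Z$ to be injective, and any injective semigroup homomorphism $\iota\colon\Sigma \hookrightarrow G$ to a group factors through an injective extension $\bar\iota\colon\Sigma_\Z \hookrightarrow G$ (if $\bar\iota(a-b)=0$ then $\iota(a)=\iota(b)$, whence $a=b$). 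Embeddings into $\Q$- or $\R$-vector spaces then correspond to embeddings into $\Sigma_\Z \otimes_\Z \Q$ or $\Sigma_\Z \otimes_\Z \R$, which are injective precisely when $\Sigma_\Z$ is torsion-free.

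For (2), necessity follows immediately: a full embedding $\iota\colon\Sigma \hookrightarrow \R^r$ extends injectively to $\Sigma_\Z \hookrightarrow \R^r$, so $\Sigma$ is torsion-free, and the spanning hypothesis forces $\rrk\Sigma \ge r$. For sufficiency, I would assume $\rrk\Sigma = \rho$ is finite and $r \le \rho$. The chain $\Sigma \hookrightarrow \Sigma_\Z \hookrightarrow \Sigma_\Z \otimes_\Z \Q \cong \Q^\rho \subset \R^\rho$ provides a full embedding for $r = \rho$. For $r < \rho$, I would exploit the countability of $\Sigma_\Z$ (a consequence of its inclusion into $\Q^\rho$) to select a linear subspace $W \subset \R^\rho$ of codimension $r$ meeting the image of $\Sigma_\Z$ only at the origin: each of the countably many nonzero vectors $v$ in (the image of) $\Sigma_\Z$ imposes the condition $v \notin W$, which cuts a proper closed subvariety of positive codimension out of the Grassmannian of $(\rho-r)$-planes, so a generic $W$ works. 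Composing with the quotient $\R^\rho \to \R^\rho/W \cong \R^r$ yields the required full embedding.

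For (3), the grading $\deg\colon\Sigma \to \Z$ extends to a group homomorphism $\Sigma_\Z \to \Z$, then $\Q$-linearly to $\Sigma_\Z \otimes_\Z \Q \to \Q$, and finally $\R$-linearly to $\Sigma_\Z \otimes_\Z \R \to \R$. Any full embedding $\iota\colon\Sigma \hookrightarrow \R^{\rrk\Sigma}$ induces an $\R$-linear surjection $\Sigma_\Z \otimes_\Z \R \twoheadrightarrow \R^{\rrk\Sigma}$ that is an isomorphism by dimension count, and transporting the linear form above through it produces the required linear grading. For (4), two such embeddings $\iota_1,\iota_2$ induce two such isomorphisms whose composite is a linear automorphism $\phi$ of $\R^{\rrk\Sigma}$ satisfying $\phi\circ\iota_1 = \iota_2$; this $\phi$ intertwines the two induced linear degrees since $\deg_2\circ\phi\circ\iota_1 = \deg_2\circ\iota_2 = \deg = \deg_1\circ\iota_1$ and $\iota_1(\Sigma)$ spans $\R^{\rrk\Sigma}$. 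Hence if $v_1,\dots,v_{\rrk\Sigma}$ is a basis of positive $\iota_1$-degree containing $\iota_1(\Sigma)$ in its positive cone, then $\phi(v_1),\dots,\phi(v_{\rrk\Sigma})$ does the same for $\iota_2(\Sigma)$, so linear boundedness is preserved. The only nontrivial step in this plan is the genericity argument in (2) for $r < \rrk\Sigma$, which crucially uses the finite rational rank hypothesis to guarantee countability of $\Sigma_\Z$.
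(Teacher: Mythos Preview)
The paper states this lemma without proof, introducing it as ``some elementary facts on embeddings,'' so there is no argument to compare against. Your proof is correct and fills in all the details the paper omits. The arguments for (1), (3), and (4) are the natural ones; the only point requiring real care is the construction in (2) of a full embedding into $\R^r$ when $r<\rrk\Sigma$, and your Baire-type genericity argument on the Grassmannian (using that $\Sigma_\Z\subset\Q^\rho$ is countable) is a clean way to do it.
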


\begin{definition}\label{def:full-embedding}
	Motivated by the last two properties in the lemma above, we call an embedding $\iota:\Sigma\hookrightarrow V$, where $V$ is a finite-dimensional real vector space, \emph{full} if $\dim_\R (V)=\rrk\Sigma$ and $\langle\iota(\Sigma)\rangle_\R=V$. 
	We shall say that a graded, cancellative, torsion free semigroup with finite rational rank is linearly bounded if its image by a full embedding is linearly bounded.
\end{definition}

\begin{remark}
	If $\iota:\Sigma\hookrightarrow V$ is a full embedding of a graded, cancellative, torsion free semigroup with finite rational rank, since the induced grading on $\iota(\Sigma)$ is linear, we may compose it with an appropriate isomorphism $V\rightarrow\R^{\rrk\Sigma}$ so that $\iota(\Sigma)$ is graded by first component in $\R^{\rrk\Sigma}$.
	Further, if $\Sigma_\Z$ is finitely generated, then a full embedding induces an embedding as a lattice $\Sigma_\Z\hookrightarrow V$.
	Again composing with a suitable isomorphism $V\rightarrow\R^{\rrk\Sigma}$ we may assume that $\Sigma_\Z\cong\langle\iota(\Sigma)\rangle_\Z=\Z^{\rrk\Sigma}\subset \R^{\rrk\Sigma}$, so $\iota(\Sigma)$ is one of the semigroups originally considered by Kaveh--Khovanskii.
\end{remark}

\begin{lemma}\label{lem:sl_invariance}
	Let $\Sigma$ be a graded, cancellative, torsion free semigroup with $\rrk\Sigma=r<\infty$. Let $\iota_1,\iota_2:\Sigma\rightarrow \R^r$ be two embeddings,
	and denote $\Delta_1(\Sigma)$, $\Delta_2(\Sigma)$
	the respective Newton--Okounkov bodies. Assume that
	\begin{enumerate}
		\item $(\iota_j)_1(\sigma)=\deg(\sigma)$ for every 
		$\sigma\in \Sigma$. In other words, $\Sigma$ is
		graded by first component for both embeddings. 
		\item $\langle\iota_j(\Sigma)\rangle_\Z=\Z^{r}\subset \R^{r}$ for both $j=1, 2$.
	\end{enumerate}
	Then there is an automorphism with integer coefficients $\phi\in \GL_{r-1}(\Z)\subset\GL_{r-1}(\R)$ of $\R^{r-1}$
	such that $\phi(\Delta_1(\Sigma))=\Delta_2(\Sigma)$.
\end{lemma}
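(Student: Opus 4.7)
The plan is to study the composite map $A\deq\iota_2\circ\iota_1^{-1}$ and show that, as a linear automorphism of $\Z^r$, it has a block form dictated by the compatibility of the two gradings; this will induce the required unimodular transformation on degree-$1$ slices.

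First I would extend each $\iota_j$ to a group isomorphism $\Sigma_\Z\bij\Z^r$. By the universal property of $\Sigma_\Z$, the embedding $\iota_j$ factors uniquely through a group homomorphism $\Sigma_\Z\to\R^r$; by hypothesis~(2) the image of this map is exactly the lattice $\Z^r$, and since $\Sigma_\Z$ is torsion-free of rational rank $r=\rrk\Sigma$, the extension is also injective. Hence $A\deq\iota_2\circ\iota_1^{-1}$ is a well-defined $\Z$-linear automorphism of $\Z^r$, that is, $A\in\GL_r(\Z)$.

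Next, hypothesis~(1) tells us that both embeddings send $\sigma\in\Sigma$ to a vector whose first coordinate equals $\deg(\sigma)$; so the first coordinate of $A(\iota_1(\sigma))=\iota_2(\sigma)$ agrees with that of $\iota_1(\sigma)$ for every $\sigma\in\Sigma$. Since $\iota_1(\Sigma)$ generates $\Z^r$ as an abelian group, $A$ must fix the first coordinate on the whole lattice, so its matrix takes the block form
\[A\equ\begin{pmatrix}1 & 0\\ \mathbf{b} & \phi\end{pmatrix},\]
with $\mathbf{b}\in\Z^{r-1}$ and $\phi\in\GL_{r-1}(\Z)$ (the latter since $\det A=\det\phi=\pm1$).

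Finally, identifying the affine hyperplane $L_1\subset\R^r$ with $\R^{r-1}$ via projection onto the last $r-1$ coordinates, the restriction of $A$ to $L_1$ becomes the affine map $y\mapsto\phi(y)+\mathbf{b}$. By construction it sends $\iota_1(\sigma)/\deg(\sigma)$ to $\iota_2(\sigma)/\deg(\sigma)$ for every $\sigma\in\Sigma$ of positive degree, and since it is a homeomorphism it carries the topological closure $\Delta_1(\Sigma)$ onto $\Delta_2(\Sigma)$. The unimodular linear part $\phi\in\GL_{r-1}(\Z)$ is then the transformation claimed by the lemma. The only delicate point is keeping track of the integer translation $\mathbf{b}$: read as an equality of subsets of $\R^{r-1}$ the identification takes the form $\Delta_2(\Sigma)\equ\phi(\Delta_1(\Sigma))+\mathbf{b}$, so the statement should be understood up to this integer shift, which is invisible to the invariants (volume, combinatorial type, rationality) that the lemma is designed to control.
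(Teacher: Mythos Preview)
Your argument is essentially the same as the paper's: both extend the $\iota_j$ to isomorphisms $\Sigma_\Z\to\Z^r$, form the composite $\iota_2\circ\iota_1^{-1}\in\GL_r(\Z)$, observe that compatibility with the grading forces it to preserve the first coordinate, and then restrict to $L_1$.

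The one substantive difference is that you are more careful than the paper about the translation term. The paper simply says that $\varphi_+$ ``restricts to an automorphism $\phi$ of $\{1\}\times\Z^{r-1}$'' and asserts $\phi(\Delta_1(\Sigma))=\Delta_2(\Sigma)$; but as you correctly note, an automorphism of the affine lattice $\{1\}\times\Z^{r-1}$ is an \emph{affine} map $y\mapsto\phi(y)+\mathbf{b}$ with $\phi\in\GL_{r-1}(\Z)$ and $\mathbf{b}\in\Z^{r-1}$, so strictly speaking one obtains $\Delta_2(\Sigma)=\phi(\Delta_1(\Sigma))+\mathbf{b}$ rather than the literal equality stated in the lemma. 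Your observation that this integer shift is invisible to the volume and to the other invariants used downstream (Definition~\ref{def:nob-abstract-semigroup}) is exactly right, and is a point the paper leaves implicit.
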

\begin{proof}
	Slightly abusing notation, we denote the extension of $\iota_j$ to $\Sigma_\Z\rightarrow\Z^{\rrk\Sigma}$, which is an isomorphism for both $j=1, 2$ with the same symbol $\iota_j$.
	Consider the automorphism $\varphi_+=\iota_2\circ\iota_1^{-1}:\Z^r\rightarrow\Z^r$.
	By Lemma \ref{lem:sgrp-embeddings} and the hypotheses, both embeddings are linearly graded by first component, and $\iota_i(\Sigma_\Z)\cap L_1=\{1\}\times\Z^{r-1}$.  
	Hence, $\varphi_+$ restricts to an automorphism $\phi$ of $\{1\}\times\Z^{r-1}$, and it is immediate from the definiton of the Newton--Okounkov set that $\phi(\Delta_1(\Sigma))=\Delta_2(\Sigma)$.
\end{proof}
This allows the following definition.

\begin{definition}[Newton--Okounkov body of an abstract semigroup]\label{def:nob-abstract-semigroup}
	Given a graded cancellative commutative semigroup $\Sigma$ such that 
	$\Sigma_\Z$ is finitely generated and torsion free, we define
	the Newton--Okounkov body $\Delta(\Sigma)$ as the Newton--Okounkov body
	of any embedding into $\R^r$ satisfying the properties of lemma \ref{lem:sl_invariance}, modulo
	the action of $\GL_{r-1}(\Z)$.

	Elements in $ \GL_{r-1}(\Z)$ have determinant $\pm1$. Therefore, for every graded, cancellative, torsion free semigroup of finite rational rank, $\vol{}{\Delta(\Sigma)}$ is a well defined positive real number.
\end{definition}

We can now prove our main Theorem on the growth of Hilbert functions of abstract graded semigroups.

\begin{theorem}\label{thm:nob-semigroup-general}
	Let $\Sigma$ be a graded, cancellative, torsion free semigroup.
	\begin{enumerate}
		\item If $\Sigma_\Z$ is finitely generated then $\rrk\Sigma=r<\infty$, and
		\begin{itemize}
			\item if $\Sigma$ is linearly bounded, $H_\Sigma(d)=\vol{}{\Delta(\Sigma)}\,d^{r-1}+o(d^{r-1}),$
			\item otherwise, $\vol{}{\Delta(\Sigma)}=\lim(H_\Sigma(d)/d^{r-1})=\infty$.
		\end{itemize}
		\item If $\Sigma_\Z$ is not finitely generated and $\rrk\Sigma=r<\infty$, then $\lim(H_\Sigma(d)/d^{r-1})=\infty$.
		\item If $\rrk\Sigma=\infty$, then for every natural number $r$, $\lim(H_\Sigma(d)/d^{r-1})=\infty$, i.e., the growth rate of $H_\Sigma$ is not polynomial.
	\end{enumerate} 
\end{theorem}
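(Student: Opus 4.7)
The plan is to treat part (1) as a direct consequence of Kaveh--Khovanskii (Theorem~\ref{thm:volNOB_semigroup_KK}), and to reduce parts (2) and (3) to part (1) by producing finitely generated subsemigroups of $\Sigma$ whose Hilbert functions grow at arbitrarily large polynomial rates.

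For part (1), since $\Sigma_\Z$ is finitely generated and torsion free of rank $r$, Lemma~\ref{lem:sgrp-embeddings} produces a full embedding $\iota\colon \Sigma\hookrightarrow\R^r$ graded by the first coordinate with $\iota(\Sigma_\Z)=\Z^r$. Then $\det^1(\iota(\Sigma))=1$, and the statement reduces directly to Theorem~\ref{thm:volNOB_semigroup_KK}, with $\vol{}{\Delta(\Sigma)}$ interpreted via Definition~\ref{def:nob-abstract-semigroup}. For part (3), $\rrk\Sigma=\infty$ lets me pick $r+1$ elements of $\Sigma$ of positive degree that are $\Q$-linearly independent in $\Sigma_\Z\otimes\Q$; the subsemigroup $\Sigma'$ they generate has $\Sigma'_\Z$ finitely generated of rank $r+1$, so part (1) yields $H_{\Sigma'}(d)\gtrsim d^{r}$, and $H_\Sigma\geq H_{\Sigma'}$ forces $H_\Sigma(d)/d^{r-1}\to\infty$.

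The substantive work is part (2). Fix a full embedding $\iota\colon\Sigma\hookrightarrow\R^r$ graded by the first coordinate. For any linearly bounded finitely generated subsemigroup $\Sigma''\subset\Sigma$ of rank $r$, part (1) combined with the change-of-coordinates identity $\vol{}{\Delta(\Sigma'')}=\vol{L_1}{\Delta(\iota(\Sigma''))}/\det^1(\iota(\Sigma''))$ gives $H_{\Sigma''}(d)/d^{r-1}\to \vol{L_1}{\Delta(\iota(\Sigma''))}/\det^1(\iota(\Sigma''))$. It therefore suffices to build a family $\Sigma^{(k)}\subset\Sigma$ of such subsemigroups with $\vol{L_1}{\Delta(\iota(\Sigma^{(k)}))}\geq v_0>0$ uniformly and $\det^1(\iota(\Sigma^{(k)}))\to 0$. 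I fix once a finitely generated $\Sigma^{(0)}\subset\Sigma$ of rank $r$ generated by $\R$-linearly independent positive-degree elements, so that $\Delta(\iota(\Sigma^{(0)}))$ is a simplex of positive volume $v_0$, and I demand $\Sigma^{(k)}\supset\Sigma^{(0)}$. Because $\Sigma_\Z/(\Sigma_\Z\cap L_0)\cong\Z$ while $\Sigma_\Z$ is not finitely generated, the group $\iota(\Sigma_\Z)\cap L_0$ has rank $r-1$ and is not finitely generated, hence contains a strictly ascending chain $G_1\subsetneq G_2\subsetneq\cdots$ of full-rank finitely generated subgroups; the indices $[G_{k+1}:G_k]\geq 2$ force $\det(G_k)\to 0$. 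Writing each generator of $G_k$ as a difference $\iota(\alpha)-\iota(\beta)$ with $\alpha,\beta\in\Sigma$ of equal positive (and sufficiently large) degree, I adjoin these finitely many $\alpha,\beta$ to $\Sigma^{(0)}$; the resulting $\Sigma^{(k)}$ satisfies $\iota(\Sigma^{(k)})_\Z\supset G_k$, so $\det^1(\iota(\Sigma^{(k)}))\leq\det(G_k)\to 0$, while $\Sigma^{(k)}\supset\Sigma^{(0)}$ and positive-degree generators ensure linear boundedness and $\vol{L_1}{\Delta(\iota(\Sigma^{(k)}))}\geq v_0$. Consequently $H_{\Sigma^{(k)}}(d)/d^{r-1}\to v_0/\det(G_k)\to\infty$, and $H_\Sigma\geq H_{\Sigma^{(k)}}$ concludes part (2).

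The main obstacle is the construction in part (2): the subsemigroup must be large enough that its group of differences contains the prescribed subgroup $G_k$ (forcing $\det^1\to 0$), yet its Newton--Okounkov body must retain a uniform lower bound on the volume. Anchoring every $\Sigma^{(k)}$ to the fixed linearly bounded core $\Sigma^{(0)}$ and adjoining only finitely many positive-degree representatives of a generating set of $G_k$ reconciles both demands.
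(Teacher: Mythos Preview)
Your proof is correct and follows essentially the same strategy as the paper: part~(1) is a direct appeal to Kaveh--Khovanskii via a full embedding with $\det^1=1$, and parts~(2) and~(3) are proved by exhibiting finitely generated subsemigroups of $\Sigma$ whose Hilbert functions grow at arbitrarily high rates, then invoking $H_\Sigma\geq H_{\Sigma'}$.

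The only noteworthy difference is in part~(2). The paper simply asserts the existence of an ascending chain $\Sigma_1\subset\Sigma_2\subset\cdots$ of finitely generated rank-$r$ subsemigroups with strictly increasing groups $(\Sigma_i)_\Z$ and surjective grading, then uses $[(\Sigma_{i+1})_\Z:(\Sigma_i)_\Z]\geq 2$ to force $\det^1\to 0$ while $\vol{}{\Delta(\Sigma_i)}\geq\vol{}{\Delta(\Sigma_1)}>0$. You instead construct this chain explicitly: you isolate the non-finitely-generated rank-$(r-1)$ group $\iota(\Sigma_\Z)\cap L_0$, build a strictly ascending tower $G_k$ of full-rank subgroups there, and realize each $G_k$ inside $(\Sigma^{(k)})_\Z$ by adjoining positive-degree difference-representatives to a fixed simplex-generating core $\Sigma^{(0)}$. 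Your version is thus a more explicit implementation of the same idea, and in particular makes linear boundedness of the $\Sigma^{(k)}$ transparent (finitely many positive-degree generators), a point the paper leaves implicit. One small item to make fully rigorous: ensure that the degree map on $(\Sigma^{(k)})_\Z$ is surjective (e.g.\ by including a degree-$1$ element among the generators of $\Sigma^{(0)}$), so that $\langle\iota(\Sigma^{(k)})\rangle_\Z\cap L_1$ is indeed a coset of a lattice containing $G_k$ and the inequality $\det^1(\iota(\Sigma^{(k)}))\leq\det(G_k)$ holds.
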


\begin{proof}
	The first statement follows immediately from Theorem \ref{thm:volNOB_semigroup_KK} applied to any full embedding $\iota$ that maps $\Sigma_\Z$ to $\Z^r$
	
	For the second statement, if $\Sigma_\Z$ is not finitely generated then we can find finitely generated subsemigroups of the same rational rank $r$, $\Sigma_1\subset\dots\subset\Sigma_i\subset\dots\subset\Sigma$ with each $(\Sigma_i)_\Z$ strictly contained in $(\Sigma_{i+1})_\Z$ and such that the restricted degree map $\deg|_{(\Sigma_{i+1})_\Z}$ is surjective for all $i$.
	Fix a full embedding $\iota:\Sigma\hookrightarrow\R^r$.
	Because $\rrk \Sigma_i=\rrk \Sigma=r$, it follows that $(\Sigma_i)_\Z\otimes_\Z \Q=\Sigma_\Z\otimes_\Z \Q\cong\Q^r$ and then $\langle\iota(\Sigma)\rangle_\R=\R^{r}$ guarantees that $\langle\iota(\Sigma_i)\rangle_\R=\R^{r}$, in particular the number $v_1=\vol{}{\Delta(\Sigma_1)}$ is positive.
	Moreover the semigroup inclusions give $\vol{}{\Delta(\Sigma_i)}\ge v_1$ for all $k$.
	
	Since $(\Sigma_i)_\Z$ has index at least 2 in $(\Sigma_{i+1})_\Z$, and the grading is fixed, $\langle\iota(\Sigma_i)\rangle_\Z\cap L_1$ has index at least 2 in $\langle\iota(\Sigma_{i+1})\rangle_\Z\cap L_1$.
	Hence $\det^1(\iota(\Sigma_{i+1}))\le \det^1(\iota(\Sigma_i))/2\le \det^1(\Sigma_1)/2^i$ and hence by Theorem \ref{thm:volNOB_semigroup_KK},
	\[H_\Sigma(d)\ge H_{\Sigma_i}(d)=\frac{\vol{}{\Delta(\Sigma_i)}}{\det^1(\iota(\Sigma_i))}\,d^{r-1}+o(d^{r-1})\ge
	2^i \, \frac{v_1}{\det^1(\iota(\Sigma_1))}\,d^{r-1}+o(d^{r-1})\]
	for all $i$ and $d$. 
	So $H_\Sigma(d)$ has faster growth than $C\,d^{r-1}$ for every constant $C$, as otherwise we would have $C\ge 2^iv_1/\det^1(\iota(\Sigma_1))$ for all $i>0$, a contradiction.
	
	Finally, if $\rrk\Sigma=\infty$, one may find a sequence of subsemigroups $\Sigma_{1}\subset\dots\subset\Sigma_r\subset\dots\subset\Sigma$ with each $\Sigma_r$ finitely generated and of rational rank $r$. Then $H_\Sigma(d)\ge H_{\Sigma_r}(d)=\vol{}{\Delta(\Sigma_r)}d^{r-1}+o(d^{r-1})$ for every $r$, and the last claim follows.
\end{proof}

\begin{definition}
	We define the \emph{volume} of a linearly bounded semigroup $\Sigma$ such that $\Sigma_\Z$ is finitely generated as $\vol{}{\Sigma}=\vol{}{\Delta(\Sigma)}$. 
	It is a positive real number satisfying $H_\Sigma(d)=\vol{}{\Sigma}\,d^{\rrk \Sigma-1}+o(d^{\rrk \Sigma-1})$.
\end{definition}

\begin{definition}[Abstract restricted semigroup]
	Given a graded, cancellative, torsion free semigroup $\Sigma$, let $\Sigma_\Q\deq \Sigma_\Z\otimes_\Z\Q$, and let $\iota_\Sigma:\Sigma\hookrightarrow \Sigma_\Q$ be the canonical embedding. 
The grading of $\Sigma$ extends to a linear form $\deg:\Sigma_\Q\rightarrow\Q$, and we denote $L_d=\deg^{-1}(d)\subset \Sigma_\Q$ the degree $d$ hyperplane.
For everly linear subspace $W\subset L_0$ and every $\sigma\in\Sigma$, we call $\Sigma|_{W+\sigma}=\Sigma\cap\iota_\Q^{-1}(\langle W+\iota_\Q(\sigma)\rangle_\Q)$ an \emph{abstract restricted semigroup}.
\end{definition}

\begin{definition}[Closed concave envelope, \cite{Roc70}, Section 7]\label{def:concave-envelope}
	Let $\Delta\subset V$ be a compact convex set, $f\colon \breve\Delta\to \RR$ a bounded real-valued function defined on a dense subset $\breve\Delta\subset\Delta$. The \emph{closed concave envelope} $f^c:\Delta\rightarrow\RR$ of $f$ is defined as 
	\[
	f^c(x) \deq \inf \{ g(x)\mid g\colon \Delta\to\RR \text{ is concave and upper-semicontinuous and } g|_{\breve\Delta}\dgeq f\, \}\ .
	\]	
\end{definition}

It follows from the definition that $f^c$ is itself concave and usc, in particular it is continuous in the interior of $\Delta$ and and along any line segment contained in $\Delta$. 
As we will see later, continuity along the boundary does not hold in general. 

\begin{ntn}\label{ntn:restricted} Let $\Sigma$ be a graded, cancellative, torsion free semigroup, and $W\subset L_0\subset \Sigma_\Q$ a linear subspace of dimension $s$ such that $\Sigma_W=W\cap\Sigma_\Z$ is a finitely generated group.
	By Theorem \ref{thm:nob-semigroup-general}, $\vol{}{\Sigma|_{W+\sigma}}$ is a finite real number, and $H_{\Sigma|_{W+\sigma}}(d)=\vol{}{\Sigma|_{W+\sigma}} d^{s-1}+o(d^{s-1})$.
	Fix an embedding $\iota:\Sigma \hookrightarrow V$ into a real vector space endowed with a volume form, and denote by the same symbol $\iota$ its unique extension to $\Sigma_\Q$. 
	Further denote $\overline{V}=V/\langle\iota(W)\rangle_\R$, $\bar{\iota}:\Sigma/W\rightarrow\overline{V}$ the embedding induced by $\iota$, and let $p:V\rightarrow \overline{V}$ be the projection.
\end{ntn}

\begin{definition}[Restricted volume function]
		Assume that $\iota$ satisfies $\dim_\R \langle\iota(W)\rangle_\R=s=\dim W$. In this case, the fiber of the map
	\begin{align*}
	\pi:\Sigma &\longrightarrow \Delta(\bar{\iota}(\Sigma/W))\\
	\sigma &\longmapsto \bar\iota(p(\sigma))/\deg(\sigma)
	\end{align*}
	over any point $\mathbf{v}=\pi(\sigma) \in \pi(\Sigma)$ is exactly the restricted semigroup $\Sigma|_{W+\sigma}$.
	If moreover $\iota(\Sigma)$ is asymptotically convex (Definition \ref{def:asymptotically-convex}), then the restriction of $\iota$ to $\Sigma|_{W+\sigma}$ is a full embedding and every $\Sigma/W$-rational point in the interior of $\Delta(\bar\iota(\Sigma/W))$ belongs to $\pi(\Sigma)$. 
	Thus we may define a map $f$ on the dense subset $\pi(\Sigma)$ of $\Delta(\bar\iota(\Sigma/W))$ as
	\[f(\pi(\sigma))=\vol{}{\Sigma|_{W+\sigma}}^{1/s}=
	\lim_{d\to\infty}\frac{H_{\Sigma|_{W+\sigma}}(d)^{1/s}}{d}\]
	whose closed concave envelope, raised to the $s$-th power, we call \emph{restricted volume function} of $\Sigma$ with respect to $W$, denoted $\operatorname{vol}_{\Sigma|W}\deq (f^c)^s:\Delta(\bar\iota(\Sigma/W))\rightarrow\R$. 
\end{definition}	

We saw in Theorem \ref{thm:nob-semigroup-general} that the volume of the Newton--Okounkov body of $\Sigma$ is not connected with its Hilbert function if $\Sigma_\Z$ is not finitely generated (in that case the volume even depends on the choice of a full embedding $\iota:\Sigma\hookrightarrow V$). 
Nevertheless, applying Theorem \ref{thm:restricted_body_semigrp} combined with Theorem \ref{thm:nob-semigroup-general} for the restricted semigroups, we can now prove that the volume of $\Delta(\iota(\Sigma))$ is the integral of the restricted volume function (which we stress is determined by the Hilbert function of the restricted semigroups) with respect to any subspace $W$ and any embedding $\iota$ satisfying the properties above.

\begin{corollary}\label{cor:volume-and-slices}
	Let $\Sigma$ be a graded, cancellative, torsion free semigroup, and $W\subset L_0\subset \Sigma_\Q$ a linear subspace of dimension $s$ such that $\Sigma_W=W\cap\Sigma_\Z$ is a finitely generated group.
	
	Let $\iota:\Sigma \hookrightarrow V$ be an embedding into a real vector space endowed with a volume form, and denote by the same symbol $\iota$ its unique extension to $\Sigma_\Q$. 
	If	$\iota(\Sigma)$ is asymptotically convex, linearly graded and linearly bounded, and $\dim_\R \langle\iota(W)\rangle_\R=s$,  then 
	\[\frac{\vol{}{\Delta(\iota(\Sigma))}}{\det^1(\iota(\Sigma_{W}))}=\int_{\Delta(\bar{\iota}(\Sigma/W))} \vol{\Sigma|W}{\mathbf{v}} d\mathbf{v} .\]
\end{corollary}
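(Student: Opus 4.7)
The plan is to (i) Fubini-decompose $\vol{}{\Delta(\iota(\Sigma))}$ along the projection $p\colon V\to\overline V$, (ii) identify the slices as restricted Newton--Okounkov bodies via Theorem~\ref{thm:restricted_body_semigrp}, (iii) express slice volumes in terms of the abstract volumes $\vol{}{\Sigma|_{W+\sigma}}$ using Theorems~\ref{thm:volNOB_semigroup_KK} and \ref{thm:nob-semigroup-general}, and (iv) replace the pointwise integrand by its closed concave envelope, invoking the Brunn--Minkowski concavity of Remark~\ref{rmk:brunn-minkowski}. Concretely, fixing a linear splitting $V\simeq\langle\iota(W)\rangle_\R\oplus V'$ compatible with $p$, the chosen volume form on $V$ decomposes as $\omega_W\wedge p^{\ast}\omega_{\overline V}$, and restricting to the affine hyperplane $L_1$ yields
\[\vol{}{\Delta(\iota(\Sigma))}\;=\;\int_{\Delta(\bar\iota(\Sigma/W))}\vol{}{\Delta(\iota(\Sigma))\cap p^{-1}(\mathbf v)}\,d\mathbf v,\]
where the identification $p(\Delta(\iota(\Sigma)))=\Delta(\bar\iota(\Sigma/W))$ is part~(1) of Theorem~\ref{thm:restricted_body_semigrp}.

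By part~(2) of the same theorem, for each $\Sigma/W$-rational $\mathbf v$ in the relative interior of $\Delta(\bar\iota(\Sigma/W))$ one can choose a lift $\sigma\in\Sigma$ such that the slice $\Delta(\iota(\Sigma))\cap p^{-1}(\mathbf v)$ equals $\Delta(\iota(\Sigma|_{W+\sigma}))$. Asymptotic convexity then ensures that the relevant lattice in the fibre direction is exactly $\iota(\Sigma_W)$: differences of equal-degree elements of $\iota(\Sigma|_{W+\sigma})$ automatically lie in $\iota(W\cap\Sigma_\Z)=\iota(\Sigma_W)$, while, for any $w\in\Sigma_W$, choosing $N$ large enough that both $N\sigma$ and $N\sigma+w$ sit in the interior of $\overline{\cone(\iota(\Sigma))}$ forces suitable multiples of each to belong to $\Sigma|_{W+\sigma}$, so the whole of $\iota(\Sigma_W)$ is asymptotically recovered. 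Combining this with Theorems~\ref{thm:volNOB_semigroup_KK} and \ref{thm:nob-semigroup-general} applied to the restricted semigroup $\Sigma|_{W+\sigma}$ yields
\[\vol{}{\Delta(\iota(\Sigma|_{W+\sigma}))}\;=\;\det{}^1(\iota(\Sigma_W))\cdot\vol{}{\Sigma|_{W+\sigma}}\;=\;\det{}^1(\iota(\Sigma_W))\cdot f(\mathbf v)^s,\]
where $f(\mathbf v)=\vol{}{\Sigma|_{W+\sigma}}^{1/s}$ is the function from the definition of $\vol{\Sigma|W}{\cdot}$.

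Substituting back into the Fubini identity and dividing by $\det^1(\iota(\Sigma_W))$ produces $\vol{}{\Delta(\iota(\Sigma))}/\det^1(\iota(\Sigma_W))=\int f(\mathbf v)^s\,d\mathbf v$, so it remains to replace $f$ by its closed concave envelope $f^c$. By Remark~\ref{rmk:brunn-minkowski} the map $\mathbf v\mapsto\vol{}{\Delta(\iota(\Sigma|_{W+\sigma}))}^{1/s}$, hence $f$ itself, is upper semicontinuous, concave, and continuous in the relative interior of $\Delta(\bar\iota(\Sigma/W))$; therefore $f=f^c$ on that interior, and since the boundary has Lebesgue measure zero, the integrals $\int f^s\,d\mathbf v$ and $\int(f^c)^s\,d\mathbf v=\int\vol{\Sigma|W}{\mathbf v}\,d\mathbf v$ coincide, giving the claimed identity. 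I expect the main technical obstacle to be the lattice identification in the middle step: when $\Sigma$ has no degree-$1$ elements one must work with sufficiently divisible degrees $d$ and verify that any finite-index discrepancies between $\langle\iota(\Sigma|_{W+\sigma})\rangle_\Z\cap L_0$ and $\iota(\Sigma_W)$ wash out in the asymptotic covolume that defines $\det^1$.
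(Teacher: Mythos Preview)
Your approach is essentially the same as the paper's: both proofs combine Theorem~\ref{thm:restricted_body_semigrp} to identify the slices, Theorem~\ref{thm:nob-semigroup-general} (or \ref{thm:volNOB_semigroup_KK}) to compute their volumes, Remark~\ref{rmk:brunn-minkowski} to pass from $f$ to $f^c$, and an (implicit in the paper, explicit in yours) Fubini step. Your version is in fact more careful than the paper's on the lattice identification $\langle\iota(\Sigma|_{W+\sigma})\rangle_\Z\cap L_0=\iota(\Sigma_W)$, which the paper takes for granted when invoking $\det^1(\iota(\Sigma_W))$.
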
 

\begin{proof}
	In the notations \ref{ntn:restricted}, the image of $\Delta(\iota(\Sigma))\subset L_1$ by the projection $p$ is $\Delta(\bar{\iota}(\Sigma/W))\subset \overline{L}_1$, where $\overline{L}_1$ denotes the hyperplane of vectors of degree 1 in $\overline{V}$, and vor every vector $\mathbf{v}\in\langle\Sigma\rangle_\Q$ in the relative interior of $\Delta(\Sigma/W)$, the slice $\Delta(\Sigma)\cap L_{\mathbf{v}}$  is the Newton--Okounkov set of the restricted semigroup $\Sigma|_{W+\sigma}$ for every  $\sigma \in \pi^{-1}(\mathbf{v})$, by Theorem \ref{thm:restricted_body_semigrp}.
	By Remark \ref{rmk:brunn-minkowski} there is a concave upper semicontinuous function defined on $\Delta(\Sigma/W)$ which agrees with $f$ on $\pi(\Sigma)$, therefore by definition the restricted volume function agrees with $f^r$ on $\pi(\Sigma)$. 
	Now by Theorem \ref{thm:nob-semigroup-general} applied to the restricted semigroups,	for $\sigma\in \Sigma$, $\mathbf{v}=\pi(\sigma)$ we have  
	\begin{equation}\label{eq:volume-slices}
			\vol{}{\Sigma|_{W+\sigma}} = \vol{\Sigma|W}{\mathbf{v}}= \frac{\vol{}{L_{\mathbf{v}}\cap\Delta(\iota(\Sigma))}}{\det^1(\iota(\Sigma_{W}))}\ ,
	\end{equation}
	and the claim follows.
\end{proof}

Corollary \ref{cor:volume-and-slices} can be seen as a far-reaching generalization of Kaveh--Khovanskii's theorem \ref{thm:volNOB_semigroup_KK}, which corresponds to the case $W=\Sigma_\Q$.
We will see in section \ref{sec:concave-transforms} that the volume formula of the Boucksom--Chen filtered Newton--Okounkov body \cite{BC} is another particular case of \ref{cor:volume-and-slices}.

\subsection{Ordered semigroups}\label{sec:ordered-semigroups}
We now turn our attention to graded semigroups endowed with a compatible total order; for these, it is natural to restrict the allowed embeddings into real vector spaces by rquiring that the order be preserved. Doing so, their Newton--Okounkov bodies are determined up to the action of a smaller group, and hence additional features of $\Delta(\Sigma)$ are invariants of $\Sigma$. 
Note that the Newton--Okounkov bodies used in algebraic geometry do correspond to ordered graded semigroups (see Lazarsfeld--Musta\c{t}\u{a} \cite{LM09}, Kaveh--Khovanskii \cite[Part III]{KK12}).
 
Let us now recall the main properties of such semigroups. 

\begin{definition}
	A group $\Gamma$ or a semigroup $\Sigma$
	is said to be \emph{ordered} if it is endowed with a total
	order $\le$ compatible with the operation,
	in the sense that $\gamma_1<\gamma_2 \Rightarrow
	\gamma_1+\gamma_3<\gamma_2+\gamma_3$
	for every $\gamma_3$. 	 
\end{definition}

If $\Sigma$ is an ordered commutative semigroup, then it
is cancellative, and its order can be extended uniquely to the group $\Sigma_\Z$,
endowing it with the structure of an ordered abelian group.
In particular, $\Sigma_\Z$ is torsion free.

Given any element $\gamma$ in an ordered abelian group $\Gamma$, 
we denote $|\gamma|=\gamma$ if $\gamma\ge 0$,
and $|\gamma|=-\gamma$ otherwise.
An ordered abelian subgroup $K\subset \Gamma$ is called 
\emph{isolated} if, given any $\gamma\in K$, 
$K$ contains every $\eta$ such that $|\eta|\le \gamma$.
The kernel of any order-preserving homomorphism between
ordered abelian groups is isolated; the quotient by
an isolated subgroup inherits a natural total order:
$\gamma +K \le \eta +K$ if $\gamma \le \eta$.

The set of isolated subgroups of an ordered abelian group
is totally ordered by inclusion.
The ordinal type of the set of proper isolated subgroups of $\Gamma$ is called the order rank of $\Gamma$, denoted $\rank_{\le}\Gamma$. 
For the basic example $\Gamma=\R^r_{\lex}$, the order rank is $r$ and the chain of isolated subgroups 
$0=K_1\subsetneq K_2 \subsetneq \dots \subsetneq K_r \subsetneq \Gamma$
has $K_i=\{0\}^{r-i+1}\times\R^{i-1}_{\lex}  \subset \R^r_{\lex}$.
We define the order rank of an ordered commutative semigroup $\Sigma$
as the order rank of $\Sigma_\Z$.

\begin{lemma}\label{lem:oag_rkn}
	With notation as above, 
	\begin{enumerate}
		\item 	an ordered abelian group $\Gamma$ is of order rank $r\in\N$ 
		if and only if it is isomorphic to an
		ordered abelian subgroup of $\R^r_{\operatorname{lex}}$.
		(This is Hahn's embedding theorem \cite[p. 62]{FS01} in the case 
		of finite rank).
		\item \label{it:chain}
		If $\Gamma$ is an
		ordered abelian subgroup of $\R^r_{\operatorname{lex}}$
		of order rank $r$
		and $0=K_1\subsetneq K_2 \subsetneq \dots 
		\subsetneq K_r \subsetneq \R^r_{\operatorname{lex}}$
		is the chain of isolated subgroups of $\R^r_{\operatorname{lex}}$,
		then 
		\[0=K_1\cap \Gamma \subsetneq K_2 \cap \Gamma\subsetneq \dots 
		\subsetneq K_r \cap \Gamma\subsetneq \Gamma \]
		is the chain of isolated subgroups of $\Gamma$.
\end{enumerate}\end{lemma}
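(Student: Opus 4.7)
Part (1) combines Hahn's embedding theorem (cited) for the forward direction with an elementary bound $\rank_{\le}\Gamma \le r$ for ordered subgroups $\Gamma \subset \R^r_{\lex}$, which will follow from the argument for part (2) below.

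The plan for part (2) proceeds in three steps. \emph{Step 1 (isolation).} Each $K_i \cap \Gamma$ is isolated in $\Gamma$: if $\gamma \in K_i \cap \Gamma$ and $\eta \in \Gamma$ satisfies $|\eta| \le \gamma$, then $\eta \in K_i$ by isolation of $K_i$ in $\R^r_{\lex}$, so $\eta \in K_i \cap \Gamma$.

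\emph{Step 2 (rank count and strictness).} I prove by induction on $i$ that $\rank_{\le}(K_i \cap \Gamma) = i-1$ for $i = 1, \dots, r$. The base $i=1$ is trivial as $K_1 = 0$. For the inductive step, the canonical quotient $\R^r_{\lex}/K_i \iso \R^{r-i+1}_{\lex}$ carries $K_{i+1}/K_i$ onto the smallest nonzero isolated subgroup, which is $\iso \R$; restricting to $\Gamma$ gives order-preserving injections
\[
  (K_{i+1}\cap\Gamma)/(K_i\cap\Gamma) \hookrightarrow \R, \qquad \Gamma/(K_{i+1}\cap\Gamma) \hookrightarrow \R^{r-i}_{\lex},
\]
yielding $\rank_{\le}$-bounds of $\le 1$ and $\le r-i$ respectively. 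Combined with additivity of order rank across an isolated quotient, namely $\rank_{\le}\Gamma = \rank_{\le}(K_{i+1}\cap\Gamma) + \rank_{\le}(\Gamma/(K_{i+1}\cap\Gamma))$, one obtains $\rank_{\le}(K_{i+1}\cap\Gamma) \ge r - (r-i) = i$ from the second bound and the hypothesis $\rank_{\le}\Gamma = r$, and $\rank_{\le}(K_{i+1}\cap\Gamma) \le (i-1) + 1 = i$ from the inductive hypothesis and the first bound. Since the ranks are distinct, $K_i \cap \Gamma \subsetneq K_{i+1} \cap \Gamma$ for $i < r$, and likewise $K_r \cap \Gamma \subsetneq \Gamma$ because $\rank_{\le}(K_r\cap\Gamma) = r-1 < r$.

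\emph{Step 3 (exhaustion and part (1)).} Since the proper isolated subgroups of $\Gamma$ form a totally ordered chain of ordinal type $r$ by hypothesis, the $r$ distinct proper isolated subgroups $K_1\cap\Gamma,\dots,K_r\cap\Gamma$ produced in Step 2 must be all of them. For the reverse implication in (1), the very same Steps 1--2 applied to an arbitrary ordered subgroup $\Gamma \subset \R^r_{\lex}$ yield at most $r$ distinct proper isolated subgroups, so $\rank_{\le}\Gamma \le r$. The main obstacle is the additivity of order rank across an isolated quotient used in Step 2; while classical, it rests on verifying the bijective correspondence between isolated subgroups of $\Gamma/I$ and isolated subgroups of $\Gamma$ containing the isolated subgroup $I$, and this correspondence is the bookkeeping on which the entire chain count relies.
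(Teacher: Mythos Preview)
The paper does not supply a proof of this lemma: part (1) is cited as Hahn's embedding theorem, and part (2) is stated without argument. Your proposal is correct and fills in what the paper leaves implicit.

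Two minor points of presentation. First, the logical order should be: establish only the \emph{upper-bound} half of Step~2 (namely $\rank_{\le}(K_i\cap\Gamma)\le i-1$, which needs only additivity and the embeddings $(K_{i+1}\cap\Gamma)/(K_i\cap\Gamma)\hookrightarrow\R$), deduce the reverse direction of (1) via $\rank_{\le}\Gamma\le\rank_{\le}(K_r\cap\Gamma)+1\le r$, and only then invoke the hypothesis $\rank_{\le}\Gamma=r$ to force equality throughout---either by your embedding $\Gamma/(K_{i+1}\cap\Gamma)\hookrightarrow\R^{r-i}_{\lex}$ (now licit, since the reverse of (1) is already in hand for smaller $r$) or, more simply, by observing that a sequence $0=a_1\le a_2\le\cdots\le a_{r+1}=r$ with each $a_{j+1}-a_j\le 1$ must satisfy $a_j=j-1$. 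As written, Step~2 uses the bound $\rank_{\le}(\Gamma/(K_{i+1}\cap\Gamma))\le r-i$ before the reverse of (1) has been established, which looks circular.

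Second, the sentence in Step~3 that ``Steps 1--2 yield at most $r$ distinct proper isolated subgroups, so $\rank_{\le}\Gamma\le r$'' is not the right justification: exhibiting $r$ particular isolated subgroups does not by itself bound the total number. The bound $\rank_{\le}\Gamma\le r$ comes from the upper-bound chain just described, not from counting the $K_i\cap\Gamma$. With these rearrangements the argument is clean and non-circular.
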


If $0=K_1\subsetneq K_2 \subsetneq \dots \subsetneq K_r \subsetneq \Gamma$ is the chain of
isolated subgroups of the order rank $r$ ordered abelian group $\Gamma$,
the quotients $R_1=K_2/K_1$, $R_2=K_3/K_2, \dots, R_r=\Gamma/K_r$ are called \emph{components} of $\Gamma$. 
Each component is itself an ordered abelian group of order rank 1, so it can be embedded in $\R$.

\begin{lemma}\label{lem:ker-deg-isolated}
	If $\Gamma$ is an ordered abelian group, and 
	$\deg$ is an order-preserving grading of $\Gamma$,
	then $\ker(\deg)$ is the maximal proper isolated subgroup
	of $\Gamma$, and the top component of $\Gamma$ is $\Gamma/\ker(\deg)$,
	isomorphic to $\Z$.
\end{lemma}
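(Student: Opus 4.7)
The plan is to verify the two claims in sequence: first that $\ker(\deg)$ is a proper isolated subgroup, then that it is the \emph{maximal} such subgroup. The description of the top component then follows immediately from the definitions.

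First I would note that $\ker(\deg)$ is isolated by the general fact recalled just above the statement (``the kernel of any order-preserving homomorphism between ordered abelian groups is isolated''), and that it is proper because $\deg$ is surjective onto the nontrivial group $\Z$. So the only nontrivial content is maximality.

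For maximality, suppose $K \supsetneq \ker(\deg)$ is a larger isolated subgroup; I would show $K = \Gamma$. Pick any $\gamma \in K \setminus \ker(\deg)$. Replacing $\gamma$ by $-\gamma$ if necessary, we may assume $\gamma > 0$, and set $n \deq \deg(\gamma)$, which is a positive integer. Now let $\eta \in \Gamma$ be arbitrary; I want $\eta \in K$. Again I may assume $\eta \ge 0$, since $K$ is a subgroup. Set $m \deq \deg(\eta) \ge 0$. The element $n\eta - m\gamma$ lies in $\ker(\deg) \subseteq K$, and $m\gamma \in K$, so
\[
n\eta \;=\; (n\eta - m\gamma) + m\gamma \;\in\; K.
\]
Since $\eta \ge 0$ and $n \ge 1$, we have $0 \le \eta \le n\eta$, hence $|\eta| = \eta \le n\eta \in K$. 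Because $K$ is isolated, this forces $\eta \in K$, as desired.

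Finally, having identified $\ker(\deg)$ as the maximal proper isolated subgroup, by definition it is the group called $K_r$ in the chain $0 = K_1 \subsetneq \dots \subsetneq K_r \subsetneq \Gamma$, so the top component $R_r = \Gamma/K_r$ equals $\Gamma/\ker(\deg)$, which is isomorphic to $\Z$ because $\deg$ is assumed surjective. The main (mild) obstacle is the maximality step, but the trick of multiplying $\eta$ by $n = \deg(\gamma)$ to reduce into $\ker(\deg)$ and then invoking the isolated property makes the argument essentially automatic.
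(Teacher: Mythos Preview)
Your proof is correct. It differs slightly from the paper's argument for maximality: the paper observes that $\Gamma/\ker(\deg)\cong\Z$ and then invokes the fact that $\Z$ has no nontrivial isolated subgroups (implicitly using the correspondence between isolated subgroups of $\Gamma$ containing $\ker(\deg)$ and isolated subgroups of the quotient). Your argument instead works directly in $\Gamma$: given $K\supsetneq\ker(\deg)$ isolated, you use the Archimedean-style trick $0\le\eta\le n\eta\in K$ to swallow every $\eta$. This is really the same idea unwound---your computation is essentially the proof that $\Z$ has no nontrivial isolated subgroups, performed upstairs. The paper's version is a bit shorter and more conceptual; yours is more self-contained and avoids appealing to the (unstated but standard) correspondence between isolated subgroups of a quotient and isolated subgroups containing the kernel.
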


\begin{proof}
	By hypothesis $\deg$ is order-preserving, hence $\ker(\deg)$ is an isolated subgroup.
	Moreover, by the assumption that gradings are surjective, $\Gamma/\ker(\deg)\cong\Z$.
	Since $\Z$ has no nontrivial isolated subgroups, it follows that $\ker(\deg)$ is maximal among the proper isolated subgroups of $\Gamma$.
\end{proof}

\begin{corollary}\label{cor:grading-by-first}
	Let $\Gamma$ be an ordered abelian subgroup of 
	$\R^r_{\lex}$ with an order-preserving grading
	$\deg:\Gamma\rightarrow \Z$. Then there exists a
	positive real number $t$ such that
	\begin{enumerate}
		\item $\Gamma$ is contained in the subgroup 
		$(\Z t \times \R^{r-1})_{\lex}$, and
		\item for all $(x_1,x_2,\dots,x_n)\in\Gamma$, $\deg(x_1,x_2,\dots,x_n)=x_1/t$.
	\end{enumerate}
\end{corollary}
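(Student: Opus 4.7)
The plan is to show that $\ker(\deg)$ coincides with the kernel of the first-coordinate projection $\pi_1\colon\Gamma\to\R$, and then to take $t$ to be the positive generator of the cyclic group $\pi_1(\Gamma)\subset\R$.

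First, Lemma~\ref{lem:ker-deg-isolated} tells us that $\ker(\deg)$ is the maximal proper isolated subgroup of $\Gamma$ and that $\Gamma/\ker(\deg)\cong\Z$. Next, I would invoke the second part of Lemma~\ref{lem:oag_rkn} (working under the understanding that the order rank of $\Gamma$ equals $r$, which is forced by the conclusion: otherwise $\pi_1(\Gamma)=0$ and the asserted identity $\deg(\gamma)=x_1/t$ would contradict surjectivity of $\deg$). That lemma identifies the chain of isolated subgroups of $\Gamma$ as the intersections $K_i\cap\Gamma$, so the maximal proper isolated subgroup is $K_r\cap\Gamma=\{\gamma\in\Gamma\mid \pi_1(\gamma)=0\}=\ker(\pi_1|_\Gamma)$. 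Therefore $\ker(\deg)=\ker(\pi_1|_\Gamma)$.

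From here $\pi_1$ factors through $\Gamma/\ker(\deg)\cong\Z$ as an order-preserving injection into $\R$, so $\pi_1(\Gamma)$ is a nonzero cyclic subgroup of $\R$, hence of the form $t\Z$ for a unique $t>0$ (positivity from the fact that $\pi_1$ is order-preserving). This immediately yields conclusion (1), since every element of $\Gamma$ has first coordinate in $t\Z$. For (2), observe that both $\deg$ and $\gamma\mapsto\pi_1(\gamma)/t$ are order-preserving surjections $\Gamma\to\Z$ with the same kernel $\ker(\deg)$; since the only order-preserving automorphism of $\Z$ is the identity, they must coincide, so $\deg(x_1,\ldots,x_r)=x_1/t$.

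The main technical content sits in the second step: converting the abstract characterization of $\ker(\deg)$ as the maximal proper isolated subgroup into the concrete statement ``the first coordinate vanishes.'' This hinges on the explicit chain-of-isolated-subgroups description inside $\R^r_{\lex}$ provided by Lemma~\ref{lem:oag_rkn}, and implicitly on $\Gamma$ having order rank $r$. Once that identification is in place, the remainder of the proof is a routine diagram chase through the quotient $\Gamma/\ker(\deg)$.
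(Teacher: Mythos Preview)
Your argument is correct and is exactly the deduction the paper intends: the corollary is stated without proof, and your use of Lemma~\ref{lem:ker-deg-isolated} to identify $\ker(\deg)$ as the maximal proper isolated subgroup, together with Lemma~\ref{lem:oag_rkn}\,(2) to recognize that subgroup concretely as $\ker(\pi_1|_\Gamma)$, is the natural route. Your parenthetical observation that the statement implicitly needs $\pi_1(\Gamma)\neq 0$ (which is weaker than full order rank $r$, but suffices for the argument) is well taken---without it the conclusion fails, as the example $\Gamma=\{0\}\times\Z\subset\R^2_{\lex}$ with $\deg(0,n)=n$ shows.
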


In other words, every graded ordered subgroup of $\R^r_{\lex}$ is graded (up to a constant) by first component, and so by applying an appropriate automorphism of $\R_{\lex}^r$ we may assume that $\Sigma\subset (\Z\times\R^{r-1})_{\lex}$
and $\deg(\sigma)=x_1$ if $\sigma=(x_1,\dots,x_r)\in \Sigma$.

\begin{definition}
	An ordered abelian group $\Gamma$ with finite order rank is said to be \emph{discrete}
if each of its components is isomorphic to $\Z$, or equivalently, if it is isomorphic to $\Z^r_{\lex}$.		
We will say that an ordered semigroup $\Sigma$ is discrete if the ordered group $\Sigma_\Z$ is discrete. In that case rational rank and order rank coincide and we will call this number simply rank.
\end{definition}

\begin{remark}
	If $\Sigma$ is a discrete ordered semgroup, some of the embeddings $\Sigma\hookrightarrow\Z^r\subset\R^r_{\lex}$ are distinguished, namely those which respect the ordering.
	If $\iota:\Sigma\hookrightarrow\Z^r_{\lex}\subset\R^r_{\lex}$ is a full, order-preserving embedding, then by Lemma \ref{lem:oag_rkn}, the ordered subgroup $\langle\iota(\Sigma)\rangle_\Z\subset\R^r_{\lex}$ is a lattice, and composing with a suitable automorphism of $\R^r_{\lex}$ we may assume that in fact $\langle\iota(\Sigma)\rangle_\Z=\Z^r_{\lex}\subset\R^r_{\lex}$.
	This allows for a stronger version of Lemma \ref{lem:sl_invariance}:
\end{remark}

\begin{lemma}\label{lem:ordered_sl_invariance}
	Let $\Sigma$ be a graded discrete ordered semigroup of rank $r$. 
	Let $\iota_1,\iota_2:\Sigma\rightarrow \R^r_{\lex}$ be two order preserving embeddings,
	and denote $\Delta_1(\Sigma)$, $\Delta_2(\Sigma)$
	the respective Newton--Okounkov bodies. 
	Assume that $\langle\iota_j(\Sigma)\rangle_\Z=\Z^{r}_{\lex}\subset \R^{r}_{\lex}$ for both $j=1, 2$.
	Then
	\begin{enumerate}
		\item $(\iota_i)_1(\sigma)=\deg(\sigma)$ for every 
		$\sigma\in \Sigma$, i.e., for both embeddings, $\Sigma$ is
		graded by first component.
		\item There is a unipotent automorphism $\phi\in \SL_{r-1}(\Z)$ of $\Z^{r-1}$ represented by a lower triangular matrix such that $\phi(\Delta_1(\Sigma))=\Delta_2(\Sigma)$.
	\end{enumerate}
\end{lemma}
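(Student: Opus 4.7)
The overall strategy is to analyze the transition isomorphism $\phi = \iota_2 \circ \iota_1^{-1}\colon \Z^r_{\lex} \to \Z^r_{\lex}$, using order-preservation together with the explicit chain of isolated subgroups supplied by Lemma~\ref{lem:oag_rkn} and the first-coordinate description of the grading from Corollary~\ref{cor:grading-by-first}.

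For part (1), I would apply Corollary~\ref{cor:grading-by-first} to each graded ordered subgroup $\iota_j(\Sigma_\Z) \subseteq \R^r_{\lex}$ (with the grading inherited from $\Sigma$). This yields a positive real $t_j$ with $\iota_j(\Sigma_\Z) \subseteq (\Z t_j \times \R^{r-1})_{\lex}$ and $\deg(\sigma) = (\iota_j(\sigma))_1 / t_j$. The hypothesis $\iota_j(\Sigma_\Z) = \Z^r_{\lex}$ forces the set of first coordinates of elements of $\iota_j(\Sigma_\Z)$ to be all of $\Z$, while it is simultaneously contained in $t_j\Z$; hence $t_j = 1$ and $(\iota_j)_1 = \deg$.

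For part (2), $\phi$ is an order-preserving group automorphism of $\Z^r_{\lex}$, so by Lemma~\ref{lem:oag_rkn}\ref{it:chain} it preserves the chain of isolated subgroups $0 = K_1 \subsetneq K_2 \subsetneq \cdots \subsetneq K_r$, where $K_i$ is spanned by the basis vectors $e_{r-i+2},\dots,e_r$. Writing $\phi$ as an integer matrix acting on column vectors, preservation of this chain is exactly the statement that the matrix is lower triangular. The induced action on each one-dimensional quotient $K_{i+1}/K_i \cong \Z$ is multiplication by the corresponding diagonal entry, which is forced to be a positive integer by order-preservation; combined with $\det\phi = \pm 1$, every diagonal entry equals $1$, so $\phi \in \SL_r(\Z)$ is unipotent lower triangular. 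Part (1) further forces the first row to be $(1,0,\dots,0)$, hence $\phi$ preserves the affine hyperplane $L_1 = \{1\} \times \R^{r-1}$; in the coordinates $(y_2,\dots,y_r)$ its restriction is the composition of the unipotent lower triangular element of $\SL_{r-1}(\Z)$ given by the lower-right $(r-1)\times(r-1)$ block with a translation coming from the entries of the first column below the top. Since $\iota_2 = \phi \circ \iota_1$, this map carries $\Delta_1(\Sigma)$ onto $\Delta_2(\Sigma)$ by definition of the Newton--Okounkov set.

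The principal technical step is the matrix-theoretic consequence of order-preservation: lower triangularity comes from preservation of the isolated subgroups, while the unit diagonal follows from combining order-preservation on the rank-one quotients $K_{i+1}/K_i$ with integrality of $\det\phi$. A further subtlety is that the first column of $\phi$ generally contributes a nontrivial affine translation on $L_1$, so the conclusion is most naturally read with the $\SL_{r-1}(\Z)$-action on $\Delta_1(\Sigma)$ supplemented by an integer translation.
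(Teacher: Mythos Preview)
Your approach is essentially the same as the paper's: analyze the transition map $\iota_2\circ\iota_1^{-1}$, obtain lower-triangularity from preservation of the chain of isolated subgroups, and obtain unit diagonal from order-preservation on the rank-one quotients (together with $\det=\pm1$). The paper phrases part~(1) via Lemma~\ref{lem:ker-deg-isolated} and part~(2) by first invoking Lemma~\ref{lem:sl_invariance} to reduce to an order-preserving automorphism of $\Z^{r-1}_{\lex}$, but the content is identical to what you wrote.

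Your closing observation is a genuine point the paper glosses over: the restriction of the linear automorphism $\varphi_+\colon\Z^r\to\Z^r$ to the affine hyperplane $\{1\}\times\Z^{r-1}$ is $y\mapsto Ay+b$ with $A$ unipotent lower triangular but $b$ a generally nonzero integer vector coming from the subdiagonal of the first column. Thus the statement as written (purely linear $\phi\in\SL_{r-1}(\Z)$) is literally correct only up to an integer translation; the paper's proof of Lemma~\ref{lem:sl_invariance} already has the same looseness. This does not affect the downstream use (volumes, or the body modulo the natural affine action), but you are right to flag it.
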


\begin{proof}
	The first claim follows from Lemma \ref{lem:ker-deg-isolated}.
	For the second, after lemma \ref{lem:sl_invariance}, we just need to check that if $\phi:\Z^{r-1}_{\lex}\rightarrow\Z^{r-1}_{\lex}$ is order-preserving then the matrix $A_\phi$ representing it is unipotent and lower triangular.
	Both properties follow from the fact that $\phi$ must preserve each isolated subgroup $\{0\}^{r-i-1} \times \Z^{i}_{\lex}$, $i=0, \dots, r-2$
	and the set of positive elements.
\end{proof}

\begin{remark}
	Let $\Sigma$ be a graded discrete ordered semigroup of rank $n$, whose associated ordered group $\Sigma_\Z$ has chain of isolated subgroups  $0=K_1\subsetneq K_2 \subsetneq \dots \subsetneq K_r \subsetneq K_{r+1}=\Sigma_\Z$, 
	and let $\iota:\Sigma \rightarrow\R^r_{\lex}$ be an order-preserving embedding. 
	By Lemma \ref{lem:oag_rkn}, $\iota$ descends to an order preserving embedding $\iota_i: \Sigma/K_i \rightarrow \R^{r-i+1}_{\lex}$.
	Moreover $\iota$ extends to $\iota:\Sigma_\Z\rightarrow \R^r_{\lex}$ and we denote $W_i=\langle {\iota}(K_i)\rangle_\R=\{0\}^{r-i+1}\times\R^{i-1}_{\lex}\subset L_0=\{0\}\times\R^{r-1}_{\lex}$ 
	and $\Sigma|_{K_i+\sigma}=\Sigma|_{W_i+\sigma}$, which \emph{is independent on the embedding.}	
\end{remark}

\begin{lemma}If $\Sigma$ is a graded ordered semigroup of order rank $r$, whose associated ordered group $\Sigma_\Z$ has chain of isolated subgroups  $0=K_1\subsetneq K_2 \subsetneq \dots \subsetneq K_r \subsetneq K_{r+1}=\Sigma_\Z$, and $i\in\{1,\dots,r+1\}$ is such that $K_i$ is a discrete ordered semigroup of rank $i-1$, then  \begin{enumerate}
		\item For every $\sigma\in\Sigma$, and every $j\in\{i,\dots,r\}$, the restricted semigroup $\Sigma|_{K_j+\sigma}$ is a discrete ordered semigroup of rank $j$.
		\item There is an ordered group automorphism $\phi$ of $\R^r_{\lex}$ such that $\phi\circ\iota(K_i)=\{0\}^{r-i+1}\times\Z^{i-1}_{\lex}\subset\R^r_{\lex}$.
	\end{enumerate}
\end{lemma}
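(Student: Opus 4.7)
The plan is to handle (2) first by an explicit construction, and then use the resulting normalized embedding to attack (1).

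For (2), Lemma \ref{lem:oag_rkn}(ii) forces $\iota(K_i)\subset\{0\}^{r-i+1}\times\R^{i-1}_{\lex}$. Since $K_i\cong\Z^{i-1}_{\lex}$ by the discreteness hypothesis, I pick a $\Z$-basis $b_1,\dots,b_{i-1}$ of $\iota(K_i)$ compatible with the chain of isolated subgroups, so that each $b_k$ has its first nonzero coordinate at position $i-k$ (within the last $i-1$ coordinates of $\R^r_{\lex}$) and that coordinate is positive. Arranging $b_{i-1},b_{i-2},\dots,b_1$ as the columns of an $(i-1)\times(i-1)$ matrix produces a lower triangular matrix with positive diagonal; its inverse is also lower triangular with positive diagonal, and therefore represents an order-preserving linear automorphism of $\R^{i-1}_{\lex}$ carrying $\iota(K_i)$ onto $\Z^{i-1}_{\lex}$. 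Extending by the identity on the first $r-i+1$ coordinates yields the required $\phi\in\Aut(\R^r_{\lex})$.

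For (1), after normalising $\iota$ via (2) so that $\iota(K_i)=\{0\}^{r-i+1}\times\Z^{i-1}_{\lex}$ and $\Sigma$ is graded by the first coordinate, I would unpack the defining relation $\deg(\sigma)\tau-\deg(\tau)\sigma\in K_j$ coordinate-wise: its first $r-j+1$ entries vanish, which forces the first $r-j+1$ coordinates of $\tau$ to be proportional to those of $\sigma$, while the remaining $j-1$ coordinates are constrained to lie in a coset of the projection of $\iota(K_j)$ onto those coordinates. I would then consider the semigroup homomorphism
\[\Psi\colon\Sigma|_{K_j+\sigma}\longrightarrow \Z\oplus K_j,\qquad \tau\longmapsto\bigl(\deg(\tau),\ \deg(\sigma)\tau-\deg(\tau)\sigma\bigr),\]
which is injective since $\iota_\Q(\tau)=(\deg(\tau)\sigma+\Psi(\tau)_{2})/\deg(\sigma)$. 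Extending $\Psi$ to the generated group realises $(\Sigma|_{K_j+\sigma})_\Z$ as an ordered subgroup of $\Z\oplus K_j$, and the rigidity enforced by the lattice $\iota(K_i)$ together with the $\Z$-valued degree places this image inside a rank-$j$ discrete ordered subgroup of $\Z\oplus K_j$.

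The main obstacle I expect is confirming that the image realises the \emph{full} rank $j$ for every $j\in\{i,\dots,r\}$: the upper bound is immediate from the description above, but the lower bound requires exhibiting $j$ linearly independent elements, namely $\sigma$ contributing the degree direction together with a rank-$(j-1)$ family of degree-zero witnesses coming from the chain $K_i\subset K_{i+1}\subset\cdots\subset K_j$, for which one must leverage both the ordered structure of $\Sigma$ and the discrete character of $K_i$.
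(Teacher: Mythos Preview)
The paper states this lemma without proof, so there is nothing to compare against directly; I will assess your argument on its own merits.

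Your treatment of (2) is sound: having $\iota(K_i)\subset\{0\}^{r-i+1}\times\R^{i-1}_{\lex}$ from Lemma~\ref{lem:oag_rkn}, and knowing $K_i\cong\Z^{i-1}_{\lex}$, you can indeed choose a basis of $\iota(K_i)$ adapted to the chain of isolated subgroups, obtain a lower-triangular matrix with positive diagonal, invert it, and extend by the identity on the first $r-i+1$ coordinates. This is the natural argument and it works.

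For (1), your map $\Psi$ is well defined and injective (the verification that $\deg(\sigma)\tau-\deg(\tau)\sigma$ lands in $K_j$ uses exactly that $\Sigma_\Z\cap W_j=K_j$, which is Lemma~\ref{lem:oag_rkn}\,(ii)), and it embeds $(\Sigma|_{K_j+\sigma})_\Z$ as an ordered subgroup of $\Z\times K_j$ lexicographically. But the real content here is \emph{discreteness}, and your phrase ``the rigidity enforced by the lattice $\iota(K_i)$ together with the $\Z$-valued degree places this image inside a rank-$j$ discrete ordered subgroup of $\Z\oplus K_j$'' does no actual work. The hypothesis makes $K_i$ discrete, not $K_j$ for $j>i$; the components $K_{\ell+1}/K_\ell$ for $\ell\ge i$ are a priori arbitrary rank-one ordered subgroups of $\R$. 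In particular, for $j=r$ and $\deg\sigma\ne 0$ one has $\langle W_r+\sigma\rangle_\R$ equal to the whole ambient space, so $\Sigma|_{K_r+\sigma}=\Sigma$ and $(\Sigma|_{K_r+\sigma})_\Z=\Sigma_\Z$; the assertion would then force $\Sigma_\Z$ itself to be discrete, yet nothing in the stated hypotheses prevents $\Sigma_\Z$ from having a non-discrete component above level $i$. So either the lemma carries an implicit assumption you have not identified and used, or the index range in (1) is misstated; in any case your argument has not closed this gap. The rank issue you flag at the end (producing $j$ independent directions) is real but secondary by comparison.
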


Therefore, if $\iota(\Sigma)$ is asymptotically convex (for instance, if $\Sigma$ is discrete, by Khovanskii's Theorem \ref{thm:approx_semigroup}) then for every discrete isolated subgroup $K_i$ the hypotheses of Corollary \ref{cor:volume-and-slices} are satisfied,  te slices of $\Delta(\Sigma)$ in the direction $W_i=\langle \iota(K_i)\rangle_\R$ are the Newton--Okounkov bodies of its restricted semigroups, and their volume is defined.
Since the $K_i$ are canonically associated to $\Sigma$, the restricted volumes $\vol{}{\Sigma|_{K_i+\sigma}}$ are canonically associated to each $\sigma\in \Sigma$, and these invariants can be computed as volumes of slices of the Newton--Okounkov body, by \eqref{eq:volume-slices}.

\subsection{Valuations and Newton--Okounkov bodies of graded algebras}

Newton--Okounkov bodies of graded algebras are defined via valuations, used as a means of associating, to an appropriate graded algebra, a graded semigroup with the same Hilbert function. 
In this subsection we recall some basics of valuation theory for convenience of the reader and to fix notations, and we give a generalization of the construction of Newton--Okounkov bodies of $\Z$-graded linear series to more general graded algebras.

\begin{dfn}
	A \emph{valuation} on a field $K$ is a map $v:K^\times\rightarrow \Gamma$, where
	$\Gamma$ is an ordered abelian group, satisfying the following properties:
	\begin{enumerate}
		\item $v(fg)=v(f)+v(g), \, \forall f, g\in K^\times$,
		\item $v(f+g)\geqslant \min(v(f),v(g)), \, \forall f, g\in K^\times$,
	\end{enumerate}
	If $R\subset K$ is a subring such that $v(a)=0,  \, \forall a \in R\setminus 0$, we say that $v$ is an $R$-valuation.
	$v(K^\times)$ is called the \emph{value group} of the valuation.
	Two valuations $v, v'$ with value groups $\Gamma, \Gamma'$ respectively are said to be
	\emph{equivalent} if there is an isomorphism $\iota:\Gamma\rightarrow \Gamma'$ of ordered groups such that $v'=\iota\circ v$.
\end{dfn}

The subring
$$R_v=\{f\in K^\times\,|\,v(f)\geqslant 0\}\cup\{0\}$$
is a \emph{valuation ring}, i.e., for all $f\in K$,
if $f\not\in R_v$  then $f^{-1}\in R_v$;
its unique maximal ideal is $\M_v=\{f \in K \,|\, v(f)>0\}$ and
the field $K_v=R_v/\M_v$ is called the \emph{residue field} of $v$.
Two valuations $v, v'$ are equivalent if and only if
$R_v=R_{v'}$ \cite[VI, \S8]{ZS75II}.

If $R\subset K$ is a subring contained in the valuation ring $R_v$, then $v(R\setminus 0)$ is a subsemigroup of the value group, which will be extremely relevant in the sequel.

\begin{dfn}
	The \emph{rank} (respectively \emph{rational rank}) of a valuation $v$  is the order rank (respectively \emph{rational rank}) of its value group.  
	Hence the rational rank is at least as large as the order rank.
	The standard example of a valuation with rational rank larger then its rank is in \cite[VI, \S14, Example 1, p. 100]{ZS75II}.
	\end{dfn}

\begin{rem}\label{rem:rational-rank} If $k\subset K$ is a subfield with finite transcendence degree $n=\trdeg_k(K)$, the rational rank (and hence the rank) of every $k$-valuation on $K$ is bounded by $n$, and in fact, by the Zariski-Abhyankar theorem, $\trdeg_k(K_v)+\rrk(v)\le\trdeg_k(K)$. Thus, for every valuation of rational rank $n$, $K_v$ is algebraic over $k$. Moreover every valuation of rank $n$ is discrete, i.e., it has a  value group  isomorphic to $\Z^{n}_{\lex}$, and every valuation of rational rank $n$ has a value group isomorphic (as an abstract group, but not necessarily as an ordered group) to $\Z^n$
	(see \cite[VI, \S10 and \S14]{ZS75II}, \cite{Abh56}, or \cite[Chap. 6, \S10, n. 3, Corollaire 1, page 161]{Bou06}).
\end{rem}
 
\begin{definition}[Newton--Okounkov body of a $\Gamma$-graded algebra]\label{def:NOB-algebra}
	Let $k$ be an algebraically closed field and $K/k$ an extension with finite transcendence degree $n$, and let $v$ be a $k$-valuation of $K$, of maximal rational rank $n$.
	Denote $\Gamma_v=v(K^\times)$ the value group of $v$.
	Let $\Gamma$ be a graded (abelian) group and $R$ a graded $k$-subalgebra of the group algebra $K[\Gamma]$:
	\[R \subset K[\Gamma]\deq\bigoplus_{\gamma\in \Gamma} K u^\gamma,\]
	where $u$ is a dummy variable and $u^\gamma \cdot u^{\gamma'}\deq u^{\gamma+\gamma'}$. 
	Denote as usual $R_\gamma=R\cap Ku^\gamma$ (the assumption that $R$ is a \emph{graded} subalgebra means that $R=\bigoplus R_\gamma$). 
	$R$ is a domain, and therefore its support
	$\Supp R=\{\gamma\in\Gamma\,|\,R\cap Ku^\gamma \ne 0\}$
	is a (graded) semigroup. 
	Moreover, it is easy to see that
	\[ \Sigma_{v}(R)=\{(\gamma,\gamma_v)\in \Gamma\times\Gamma_v \,|\,
	\exists s\in K^\times, su^\gamma \in R_\gamma, v(s)=\gamma_v \}  \]
	is a subsemigroup of $\Gamma\times\Gamma_v$. 
	The group $\Gamma\times\Gamma_v$ is graded by $\deg(\gamma,\gamma_v)=\deg(\gamma)$.
	If $\Gamma\times\Gamma_v$ is finitely generated, or a fixed embedding $\iota:\Gamma\times\Gamma_v \hookrightarrow V$ is given, we define the \emph{Newton--Okounkov set of $R$ with respect to $v$} as the convex set determined by this semigroup: $\Delta_{v}(R)\deq\Delta(\iota(\Sigma_{v}))$.
	
	The points in $\Delta_v(R)$ of the form $\sigma/\deg(\sigma)$ for some $\sigma\in \Sigma_{v}(R)$, or equivalently of the form $(\gamma,v(s))/\deg(\gamma)$ for some $su^\gamma \in R_\gamma$, are called \emph{valuative}.
\end{definition} 
 
\begin{remark}
	The usual definition of Newton--Okounkov bodies of algebras as given by \cite{KK12}, for instance, coincides with the particular case of \ref{def:NOB-algebra} in which $\Gamma=\Z$.
	Note that, since $\Gamma$ is always assumed to be graded, every algebra $R$ as in definition \ref{def:NOB-algebra} is also $\Z$-graded, with homogeneous pieces $R_d=\bigoplus_{\deg\gamma=d}R_\gamma$; when we refer to the Hilbert function of $R$ we always mean the Hilbert function with respect to the $\Z$-grading: $H_R(d)=\dim R_d$.
\end{remark} 
 
 \begin{remark}
 	 The valuation $v : K\rightarrow \Gamma_v$ can be trivially extended to $K[\Gamma]$ by setting $v(u)=0$ and $v(\sum s_\gamma u^\gamma)=\min_\gamma(v(s_\gamma))$. 
 	 We denote the restriction to $R$ of this trivial extension $v_R: R\rightarrow \Gamma$, or if no confusion seems likely, simply $v$.
 \end{remark}
 
\begin{theorem}\label{thm:Hilbert-function-algebra-semigroup}
	Let $k$ be an algebraically closed field and $K/k$ an extension with finite transcendence degree $n$, and let $v$ be a $k$-valuation of $K$, of maximal rational rank $n$.
	Denote $\Gamma_v=v(K^\times)$ the value group of $v$.
	Let $\Gamma$ be a graded (abelian) group with finite rational rank $r$, and $R$ a $k$-subalgebra of the group algebra $K[\Gamma]$.
	Then the Hilbert functions of the algebra $R$ and $\Sigma_{v}(R)$ agree.
	In particular, $\lim_{d\to\infty} H_R(d)/d^{n+r-1}$ exists, it is finite if and only if $\Gamma\times\Gamma_v$ is finitely generated and $\Sigma_v(R)$ is linearly bounded, and in that case $\lim_{d\to\infty} H_R(d)/d^{n+r-1}=\vol{}{\Delta_{v}(R)}$.
\end{theorem}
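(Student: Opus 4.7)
The plan is first to prove the Hilbert function equality $H_R(d)=H_{\Sigma_v(R)}(d)$ for every $d$, then to apply Theorem~\ref{thm:nob-semigroup-general} to the semigroup $\Sigma_v(R)$ to obtain the asymptotic statement. The core input is the equality $K_v=k$, forced by the hypothesis of maximal rational rank: Remark~\ref{rem:rational-rank} gives $\trdeg_k K_v+\rrk v\le n$, so $\trdeg_k K_v=0$, whence $K_v=k$ because $k$ is algebraically closed.

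Fix $\gamma\in\Gamma$ and write $R_\gamma=R_\gamma'\,u^\gamma$ with $R_\gamma'\subset K$ a $k$-subspace. For each $\gamma_v\in\Gamma_v$, set
\[F_{\ge\gamma_v}(R_\gamma')=\{s\in R_\gamma'\mid s=0\text{ or }v(s)\ge\gamma_v\},\]
and define $F_{>\gamma_v}(R_\gamma')$ analogously. The essential step, which I would establish next, is the \emph{one-dimensional leaves} property: $\dim_k F_{\ge\gamma_v}(R_\gamma')/F_{>\gamma_v}(R_\gamma')\le 1$, with equality precisely when $(\gamma,\gamma_v)\in\Sigma_v(R)$. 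Given two nonzero $s_1,s_2\in F_{\ge\gamma_v}(R_\gamma')$ with $v(s_i)=\gamma_v$, the ratio $s_1/s_2$ lies in $R_v^\times$ and its residue class $\bar c\in K_v=k$ gives $v(s_1-cs_2)>\gamma_v$, showing the quotient is at most one-dimensional; it is nonzero iff some $s\in R_\gamma'$ has $v(s)=\gamma_v$, i.e.\ iff $(\gamma,\gamma_v)\in\Sigma_v(R)$.

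For any finite-dimensional subspace $V\subset R_\gamma'$, the decreasing filtration $\{F_{\ge\gamma_v}(V)\}_{\gamma_v\in\Gamma_v}$ has finitely many jumps, and the leaves property forces each jump to contribute exactly one dimension, so $\dim_k V$ equals the number of distinct valuations attained by nonzero elements of $V$. Taking suprema over finite-dimensional subspaces yields
\[\dim_k R_\gamma = \bigl|\{\gamma_v\mid(\gamma,\gamma_v)\in\Sigma_v(R)\}\bigr|\quad\text{in }\N\cup\{\infty\},\]
and summation over $\gamma\in\Gamma$ with $\deg\gamma=d$ produces $H_R(d)=H_{\Sigma_v(R)}(d)$.

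For the ``in particular'' statement, note that $\Gamma_v\cong\Z^n$ as an abstract group by Remark~\ref{rem:rational-rank}, hence $\Gamma\times\Gamma_v$ has rational rank $r+n$ and $\Sigma_v(R)$ is a graded subsemigroup with respect to $\deg(\gamma,\gamma_v)=\deg\gamma$. Theorem~\ref{thm:nob-semigroup-general} then shows that $H_{\Sigma_v(R)}(d)/d^{n+r-1}$ has a limit in $[0,\infty]$; the limit is finite precisely when $\Gamma\times\Gamma_v$ is finitely generated, $\Sigma_v(R)$ has full rational rank $r+n$, and is linearly bounded, and in that case it equals $\vol{}{\Delta_v(R)}$ by Definition~\ref{def:nob-abstract-semigroup}. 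The most delicate point in the whole argument is the one-dimensional-leaves step, where the use of maximal rational rank to secure $K_v=k$ is unavoidable; the passage to infinite-dimensional $R_\gamma$ is then routine via the finite-dimensional reduction above.
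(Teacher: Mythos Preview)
Your proof is correct and follows essentially the same approach as the paper. The paper's proof is very terse: it invokes Remark~\ref{rem:rational-rank} (yielding $K_v=k$) and cites the case $\Gamma=\Z$ treated in \cite{KK12} to assert $\dim_k R_\gamma=|v_R(R_\gamma\setminus\{0\})|$, then sums over $\gamma$ in a given degree and applies Theorem~\ref{thm:nob-semigroup-general}, exactly as you do; your ``one-dimensional leaves'' argument simply spells out the step the paper leaves implicit in its reference to Kaveh--Khovanskii. One small caution: in the ``in particular'' clause you add ``$\Sigma_v(R)$ has full rational rank $r+n$'' to the finiteness characterization, but if the rank is strictly smaller the limit is $0$, hence still finite, so this extra condition should be dropped to match the stated theorem.
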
 
\begin{proof}
	As in the case $\Gamma=\Z$ considered in \cite{KK12}, from the hypothesis that $k$ is algebraically closed and Remark \ref{rem:rational-rank} it follows that 
	$\dim_k R_\gamma= |v_R(R_\gamma\setminus\{0\})|.$
	Hence $\dim_k R_d= \sum_{\deg\gamma=d} \dim_k R_\gamma=|(\Sigma_{v}(R))_d|$, i.e., the Hilbert functions agree. 
	The remaining claims follow from Theorem \ref{thm:nob-semigroup-general}.
\end{proof}
 
 \subsection{Newton--Okounkov bodies of Cartier divisors and of line bundles.}
Now we recall the construction of Newton--Okounkov bodies in the geometric case. 
This subsection is mainly expository, to fix notation and to show how the theory of \cite{LM09} fits in our context.
We believe that Example \ref{exa:negative-no}, on \emph{negative} Newton--Okounkov bodies, and Remark \ref{rem:slices-geometric}, on the variation of the body when a different but equivalent valuation is used, are new.
 
Assume $X$ is a normal projective variety of dimension $n$ over the algebraically closed field $k$, let $K=K(X)$ be the field of rational functions on $X$, and let $v$ be a $k$-valuation of $K$ of maximal rational rank $n$.
By the valuative criterion of properness \cite[II, 4.7]{HAG}, since $X$ is projective, there is a (unique) morphism
$$\sigma_{X,v}:\Spec (R_v)\rightarrow X$$
which, composed with
$\Spec (K(X)) \rightarrow \Spec( R_v)$, identifies $\Spec (K(X))$
as the {generic point} of $X$.
The image in $X$ of the closed point of $\Spec (R_v)$
(or the irreducible subvariety which is its closure)
is called the \emph{centre} of $v$ in $X$, and we denote it by $\cent_X(v)$, or simply $\cent(v)$ when the variety $X$ is understood.
In the case of a valuation of maximal rational rank, the residual field is $k$ by Remark \ref{rem:rational-rank}, and hence the center is actually a closed point of $X$. 

Let us recall the most used valuations of maximal rational rank.

\begin{exm}(Valuation associated to an admissible flag)\label{exa:flag} A \emph{full flag} $Y_\bullet$ of irreducible subvarieties
	\begin{equation}\label{eq:flag2}
	X = Y_0 \supset Y_1 \supset \ldots\supset Y_{r-1} \supset Y_n
	\end{equation}
	is called \emph{admissible}, if $\codim_X (Y_i)=i$ for all $0\leqslant i\leqslant\dim (X)=n$, and $Y_i$ is normal and smooth at the point $Y_n$, for all $0\leqslant i\leqslant n-1$. The flag is called \emph{good} if $Y_i$ is smooth for all $i=0,\ldots,n$.
	
	Fix an admissible flag $Y_\bullet$.
	Let $f\in K(X)$ be a non-zero rational function, and set
	\[
	v_1(f) = \ord_{Y_1}(f) \quad \text{and}   \quad  f_1 = \left.\frac{f}{g_1^{v_1(f)}}\right|_{Y_1}
	\]
	where $g_1=0$ is a local equation of $Y_1$ in $Y_0$ in an open Zariski subset around the point $Y_r$.  Continuing this way via
	\[
	v_i(f) = \ord_{Y_i}(f_{i-1})\ ,\  f_i = \left.\frac{f_{i-1}}{g_i^{v_i(f_{i-1})}}\right|_{Y_i} \quad \text {for all}\quad i=2,\ldots, n,
	\]
	where $g_{i}=0$ is a local equation of $Y_i$ on $Y_{i-1}$ around $Y_n$,
	we arrive at a function
	\[
	f \mapsto v_{Y_\bullet}(f) = (v_1(f),\ldots, v_n(f))\ .
	\]
	One verifies that $v_{Y_\bullet}$ is a valuation of maximal rank, whose center is the point $Y_n$, and its value group is the discrete ordered group of rank $n$, namely $\Z^n_{\lex}$.
\end{exm}

\begin{exm}[Proper transform flag]\label{ex:proper transform flag}
	Fix an admissible flag $Y_\bullet$.	
	Let $\pi\colon \tX\to X$ be a proper birational morphism such that $Y_n$ is outside of the exceptional locus of $\pi$.  
	Then $\pi^*$ induces a $K$-algebra isomorphism $K(X)\to K(\tX)$.
	Let us write $\tY_\bullet$ for the flag where $\tY_i$ is the proper transform of $Y_i$ under $\pi$. Then $v_{\tY_\bullet}(\pi^*(f)) = v_{\ybul}(f)$ for a rational function $f$ on $X$. 
\end{exm}

\begin{remark} \label{thm:flag-valuation}
	It was proved in \cite[Theorem 2.9 and Remark 2.10]{CFKLRS} that for every $k$-valuation
	of $K(X)$ of maximal rank $n=\dim (X)$ there exist a proper birational morphism
	$\pi:\tilde X \rightarrow X$  and an admissible flag
	\[
	Y_\bullet : \tilde X= Y_0 \supset Y_1 	\supset \ldots \supset Y_n
	\]
	such that $v$ is equivalent to the valuation associated to $Y_\bullet$.
\end{remark}

\begin{example}
	Let $p\in X$ be a smooth point, and $(x_1,\dots,x_n)$ local coordinates around $p$, so that the completion of the local ring at $p$ is the power series ring in the coordinates: $\widehat{\mathcal{O}_{X,p}}\simeq k[[x_1,\dots,x_n]]$.
	Let $\Gamma$ be an ordered group of rank $n$, and $\gamma_1,\dots,\gamma_n\in\Gamma$ positive elements with $\langle\gamma_1,\dots,\gamma_n\rangle_\Z=\Gamma$.
	Define a valuation in $k[[x_1,\dots,x_n]]$ by
	\[v\left(\sum a_\alpha\textbf{x}^\alpha\right)=
	\min\{\alpha\cdot\gamma\deq\alpha_1\gamma_1+\dots\alpha_n\gamma_n | a_\alpha\ne0\}.\]
	One easily checks that the restriction of this valuation to $K(X)\hookrightarrow\mathcal{O}_{X,p}\hookrightarrow k[[x_1,\dots,x_n]]$ has rational rank $n$, and its center is the point $p$.
	
	Note that flag valuations are particular instances of this construction, taking $\Gamma=\Z^n_{\lex}$, $(x_1,\dots,x_i)$ to be local equations for $Y_i$ and $\gamma_i$ equal to the $i$th unit coordinate vector in $\Z^n_{\lex}$.
\end{example}

\begin{example}[Composite valuations]\label{exa:composite}
	Let $r$ be an integer $0\le r< n$; let
		\[
		Y_\bullet : \tilde X= Y_0 \supset Y_1 	\supset \ldots \supset Y_r
		\]
	be a \emph{partial admissible flag}, i.e., 	$\codim_X (Y_i)=i$ for all $0\leqslant i\leqslant\dim (X)=r$, and $Y_i$ is normal and smooth at the point $Y_r$, for all $0\leqslant i\leqslant r$; 
	and let $\bar v$ be a valuation on $K(Y_r)$ of maximal rational rank $n-r$ and value group $\Gamma$.
	Then defining $v_i(f)$ and $f_i$ as in Example \ref{exa:flag} for $i=1, \dots, r$, and $v_{r+1}(f)=\bar v(f_r)$ one gets a valuation
	\[f \mapsto v_{Y_\bullet,\bar{v}}(f)=(v_1(f),\dots,v_{r+1}(f))\in\Z^r\times\Gamma\]
	of rational rank $r+n-r=n$.
\end{example}

\begin{rem}\label{rem:valuation of sections of subsheaves}
Global sections of subsheaves of $K(X)$ are evaluated naturally, in particular, $v(s)$ for $s\in H^0(X,\OO_X(D))$ is naturally defined for a Cartier divisor $D$. The situation with invertible sheaves is less comfortable, as the following example shows. Consider $X\deq \PP^1$, and $v\deq \ord_P$ for $P\in\PP^1$ an arbitrary point, let $Q\neq P\in \PP^1$. 
Then 
\[
H^0(X,\OO_X(Q)) \equ \{ f\in K(X) \mid \ddiv (f) + P \succcurlyeq 0  \} \cup \{0\}\ ,
\]
in particular $v(s)\geq 0$ for all $0\neq f\in H^0(X,\OO_X(Q))$. At the same time the isomorphic invertible sheaf $\OO_X(P)$ has a global section $f$ with $v(f) =-1$; one can in fact obtain global sections with arbitrarily negative valuations by considering $\OO_X(mP-(m-1)Q)$. 

We see that in order to be able to define valuations of global sections of invertible sheaves some choices are needed.
\end{rem}

\begin{rem}\label{rem:val_divisors}
Let $L$ be an invertible sheaf on $X$, $v$ a valuation of $K(X)$. Let $D$ be a Cartier divisor on $X$ such that $L\simeq \OO_X(D)$ and the center of $v$ is not contained in $\Supp D$. Write $\phi_D\colon L\to \OO_X(D)$ an isomorphism. For a global section $0\neq s\in H^0(X,L)$ we set 
\[
v(s) \deq v(\phi_D(s)) \dgeq 0 \ .
\] 
Observe that the value $v(s)$ is independent of the choice of $\phi_D$. Moreover, if $D'\sim D$ is another Cartier divisor, then $v(\phi_D(s)) = v(\phi_{D'}(s)) + v(f)$  upon writing $D-D'=\ddiv (f)$ for a suitable rational function $f\in K(X)$,
hence by assuming $\cent (v)\not\subseteq\Supp D'$ as well, we obtain that $v(f)=0$, and that $v(\phi_D(s)) = v(\phi_{D'}(s))$. If $E=V(s)$ is a divisor then we define $v(E)$ to be $v(s)$.

We can conclude that by taking an isomorphism $L\stackrel{\sim}{\to} \OO_X(D)$ for  a Cartier divisor $D$ such that $\cent (v)\not\subseteq \supp(D)$, $v(s)$ has a well-defined non-negative value for every non-zero global section of $H^0(X,L)$. Note that this is the implicit convention used in \cite{LM09}. 
\end{rem}

Fix a projective variety of dimension $r$ over $k$, $K=K(X)$ its field of rational functions, and a (not necessarily effective) Cartier divisor $D$ on $X$. Write 
\begin{equation}\label{eq:section-ring}
R\deq R(X,\OO_X(D))=\bigoplus_{d\in \Z} H^0(X, \OO_X(dD))u^d\subset K[u,u^{-1}]=K[\Z] 
\end{equation}
for the section ring of $D$. 
Here, we consider the space $H^0(X, \OO_X(dD))$ as a space of rational functions on $X$ with singularities only along positive components of the divisor $D$, rather than as sections of the line bundles $\mathcal{O}_X(D)$. 
For each $d$ the graded  piece $R_d= H^0(X, \OO_X(dD))$ therefore comes equipped with a natural inclusion $R_d\subset K$, whence the inclusion $R\subset K[u,u^{-1}]$ of \eqref{eq:section-ring}. 
(The existence of this natural inclusion is the reason we initially work with Cartier divisors rather than line bundles.)  
In fact, since $dD$ and $-dD$ cannot be both linearly equivalent to effective divisors, either $R\subset K[u]$ or $R\subset K[u^{-1}]$.
Now if $v:K^\times\rightarrow \Gamma$ is a valuation of maximal rational rank equal to $n$,
we are in the situation of the preceding subsection, and we can
define the Newton--Okounkov body of $D$:

\begin{definition}[Newton--Okounkov bodies of section rings] \label{defn:NOb I}
	Let $X$ be a projective variety of dimension $n$, $D$ a Cartier divisor, $R\deq R(X,\mathcal{O}_X(D))$ the section ring of $D$ defined in \ref{eq:section-ring}, and $v$ a valuation of $K$ of maximal rational rank $n$.
	Then the Newton--Okounkov set $\Delta_v(D)$ of $D$ with respect to $v$ is defined as the Newton--Okounkov set of $R$ with respect to $v$.
	We also use the notation $\Sigma_v(D)=\Sigma_{v}(R)$ for the corresponding semigroup.
\end{definition} 
\begin{remark}\label{rem:counting}
	By Theorem \ref{thm:Hilbert-function-algebra-semigroup},
	$\dim H^0(X,\O_X(dD))=H_{\Sigma_v(D)}(d)$.
\end{remark}

The fact that $\Delta_v(D)$ is a body for $D$ big follows from the following basic result:

\begin{proposition}[Kaveh--Khovanski, Lazarsfeld--Musta\c{t}\u{a}]\label{pro:body}
	As a semigroup graded by first component, $\Sigma_v(D)$ is linearly bounded. If moreover $D$ is big, then $\Sigma_v(D)$ has rank ${n +1}$.
\end{proposition}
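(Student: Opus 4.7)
My plan is to handle the two assertions separately, with linear boundedness first reduced to the big case and then argued via the flag-valuation machinery, and the rank assertion deduced from the volume asymptotic.

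For linear boundedness I would first reduce to the case when $D$ is big. Writing $D=A_1-A_2$ as a difference of very ample divisors and fixing a nonzero $t\in H^0(X,\mathcal{O}_X(A_2))$, multiplication by $t^d$ induces an injection $H^0(X,\mathcal{O}_X(dD))\hookrightarrow H^0(X,\mathcal{O}_X(dA_1))$, which at the level of value semigroups is the affine shift $(d,v(s))\mapsto(d,v(s)+d\,v(t))$. Thus $\Sigma_v(D)$ is carried into an affine shear of $\Sigma_v(A_1)$, and since affine shears in the degree-zero direction preserve linear boundedness, the big case suffices.

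Now assume $D$ is big. When $v$ has maximal rank $n$, I would use Remark~\ref{thm:flag-valuation} together with Example~\ref{ex:proper transform flag} to pass to a birational model $\pi\colon\tilde X\to X$ carrying an admissible flag $Y_\bullet$ with $v=v_{Y_\bullet}$ and $\Sigma_v(D)=\Sigma_{v_{Y_\bullet}}(\pi^*D)$. The standard flag bound then applies: sections of $\mathcal{O}_{\tilde X}(d\pi^*D)$ vanishing to order $\ge k$ along $Y_1$ belong to $H^0(\tilde X,\mathcal{O}_{\tilde X}(d\pi^*D-kY_1))$, which vanishes as soon as $d\pi^*D-kY_1$ leaves $\overline{\Eff}(\tilde X)$, so $v_1(s)\le \mu_1\,d$ with $\mu_1=\sup\{t\ge 0:\pi^*D-tY_1\in\overline{\Eff}(\tilde X)\}<\infty$. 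Restricting the residue along $Y_1$ and iterating on $Y_2,\ldots,Y_n$ yields componentwise bounds $v_i(s)\le \mu_i\,d$, placing $\Sigma_v(D)$ inside the simplicial cone over $\prod_{i=1}^n[0,\mu_i]\subset\{1\}\times\R^n$. For $v$ of maximal rational rank but rank strictly less than $n$ I would carry through the same reasoning after realising $v$ as a quasi-monomial (Abhyankar) valuation on a suitable birational model, or equivalently as a composite valuation as in Example~\ref{exa:composite}, so that local multiplicity bounds at the centre of $v$ translate into componentwise bounds for $v(s)$ in a full embedding of $\Gamma_v$ into $\R^n$.

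For the rank assertion when $D$ is big, I would combine the classical asymptotic $h^0(X,\mathcal{O}_X(dD))\sim\vol{}{D}\cdot d^n/n!$ with $\vol{}{D}>0$ and Theorem~\ref{thm:Hilbert-function-algebra-semigroup} to conclude that $H_{\Sigma_v(D)}(d)$ grows polynomially of degree exactly $n$. Applying Theorem~\ref{thm:nob-semigroup-general}(1) to the linearly bounded $\Sigma_v(D)$ then forces $\rrk\Sigma_v(D)=n+1$, matching the a priori upper bound coming from $\Sigma_v(D)\subset\Z\times\Gamma_v$. I expect the main obstacle to be the non-maximal-rank case of linear boundedness: Remark~\ref{thm:flag-valuation} does not directly apply, and one must invoke finer Abhyankar--Zariski local structure of the valuation ring in order to reduce to the flag setting.
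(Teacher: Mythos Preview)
Your reduction to the big case via $D=A_1-A_2$ and multiplication by a section of $A_2$ is exactly \cite[Lemma~1.10]{LM09}, and your flag argument for linear boundedness is \cite[Lemma~2.2]{LM09}; these match the paper's citations. Your use of Remark~\ref{thm:flag-valuation} to reduce maximal-rank valuations to flags also matches, though note that the paper explicitly restricts this step to characteristic zero. Your rank argument---combining $h^0(dD)\sim \vol{}{D}\,d^n/n!$, Theorem~\ref{thm:Hilbert-function-algebra-semigroup}, and Theorem~\ref{thm:nob-semigroup-general}---is likewise in the spirit of the paper's remarks.

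The real divergence is in how you handle the residual case (valuations of maximal rational rank $n$ but order rank strictly less than $n$, or arbitrary characteristic). You propose to realise $v$ as a quasi-monomial/Abhyankar valuation on a model and imitate the flag bounds, and you correctly flag this as the main obstacle---Remark~\ref{thm:flag-valuation} does not cover such valuations. The paper takes an entirely different route here: it cites the Kaveh--Khovanskii/Boucksom argument, which deduces linear boundedness from a \emph{global} input, namely that $h^0(X,\mathcal{O}_X(dD))$ grows at most like $d^n$. This comes from Hilbert--Serre, or, as the paper stresses, merely from the existence of $\vol{}{L}$ as a real limit---and the latter already follows by applying the Lazarsfeld--Musta\c{t}\u{a} flag argument to \emph{some} admissible flag on $X$ together with Theorem~\ref{thm:Hilbert-function-algebra-semigroup}. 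Since $H_{\Sigma_v(D)}(d)=h^0(dD)$ by Remark~\ref{rem:counting}, this count constrains $\Sigma_v(D)$ for every valuation $v$ of maximal rational rank simultaneously, with no local analysis of $v$ whatsoever. Your obstacle is therefore genuine for the route you chose, but the paper's route bypasses it entirely.
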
 

\begin{proof}[Remarks on proofs of \ref{pro:body}]
	If $v$ is a flag valuation, the claims are proved by Lazarsfeld-Mustata 
	in \cite[Lemma 1.10 and Lemma 2.2]{LM09}.
	In characteristic zero, if the valuation $v$ has maximal rank $n$, then there is a birational model $\pi\colon\tilde X\rightarrow X$ and an admissible flag 
	$Y_\bullet$ on $\tilde X$ such that $v$ is the valuation
	determined by the flag $Y_\bullet$ (Remark \ref{thm:flag-valuation}). 
	Moreover, $\Sigma_v(D))=\Sigma_v(\pi^*(D))$, and the claims reduce to the flag case.
	The general case of a valuation of rank less or equal to $n$ is proved by
	Kaveh-Khovanskii \cite{KK12} and Boucksom \cite{Bou14}
	using the Hilbert-Serre Theorem. 
	We remark that the full strength of the Hilbert-Serre
	Theorem is not needed in those proofs, for which the
	existence of the volume
	$\vol{}{L}=\lim H^0(X,\O_X(dD))/(k^{d}/d!)$ as a real number
	is enough. Since every projective variety $X$ supports
	admissible flags, the Lazarsfeld-Mustata argument
	together with Theorem \ref{thm:Hilbert-function-algebra-semigroup} below suffice to 
	prove the existence of the volume. Hence 
	Proposition \ref{pro:body} can be proved without resource
	to Hilbert-Serre.
\end{proof}

\begin{rem}
	With notation as above, if $D'\sim D+\ddiv(f)$ for a rational function $f$ on $X$, then 
	\[
	\Delta_v(R(X,\mathcal{O}_X(D'))) \equ \Delta_v(R(X,\mathcal{O}_X(D))) + v(f)\ \subseteq \RR^{n}\ .
	\]
\end{rem}

\begin{exm}[Negative Newton--Okounkov bodies]\label{exa:negative-no}
	Let $X=\PP^1$, $P\neq Q$ point in $\PP^1$, and  $v=\ord_P\colon K(X)^\times\to\ZZ$ (cf. Remark~\ref{rem:valuation of sections of subsheaves}). Then
	\[
	\Delta_{\ord_P}(\OO_{\PP^1}(mP+(m-1)Q)) \equ [-m,-(m-1)]  \dsubseteq \RR^1
	\] 
	for every natural number $m$. 
	
	For an arbitrary variety $X$, the set $\{v(f), f\in K(X)\}$ is exactly the value group of $v$, so the bodies obtained in the previous remark by changing representatives $D'$ are exactly all the translates by integer vectors of $\Delta_{v}(R(X,\mathcal{O}_X(D)))$.
\end{exm} 

\begin{remark}
	It is natural to consider the case of graded subalgebras of $R(X,\mathcal{O}_X(D))$ as well (i.e., Newton--Okounkov bodies of graded linear series); we will briefly use this possibility in section \ref{sec:concave-transforms}. 
	This more general setup was studied, including conditions for the Newton--Okounkov set to be a body, by Lazarsfeld--Musta\c{t}\u{a} \cite{LM09}.
\end{remark}

Now we move on to the definition of Newton--Okounkov bodies for invertible sheaves as found in \cite{LM09} for instance. 
As observed in  Remark~\ref{rem:val_divisors}, the fact that the construction is well-defined and delivers a non-negative convex body relies on certain choices. 

\begin{definition}[Newton--Okounkov bodies of invertible sheaves]\label{defn:NOb II}
	Let $X$ be a projective variety, $L$ an invertible sheaf on $X$,  $v:K(X)^\times\rightarrow \Gamma$ a valuation of maximal rational rank equal to $n=\dim X$. Let $D$ be an arbitrary 
	Cartier divisor on $X$ such that $L\simeq \OO_X(D)$ and $\cent (v)\not\subseteq \Supp D$. The	Newton--Okounkov set $\Delta_v(L)$ is then defined to be $\Delta_v(D)$, where the latter is the convex set from \ref{defn:NOb I} (and it is a body if $L$ is big). 
\end{definition}

\begin{rem}
	It follows from Remark~\ref{rem:val_divisors} that $\Delta_v(L)$ is independent of the choice of $D$ and is contained in $\RR_{\geq 0}^n$. In particular $\Delta_{\ord_P}(\OO_{\PP^1}(1)) = [0,1]$ for an arbitrary point $P\in \PP^1$.
\end{rem} 

\begin{convention}[Newton--Okounkov bodies of invertible sheaves and line bundles]\label{conv:NOb}
	From now on when we talk about Newton--Okounkov bodies of invertible sheaves or line bundles we will mean the Newton--Okounkov bodies from Definition~\ref{defn:NOb II}. This is in line with \cite{LM09} and all subsequent research. 	
\end{convention} 

The main theorem of the theory of Newton--Okounkov bodies of line bundles, which we proceed to state, is then an immediate consequence of Theorem \ref{thm:nob-semigroup-general} and Remark \ref{rem:counting}.

\begin{theorem}[{\cite[Corollary 3.2]{KK12}}, {\cite[Theorem 2.3]{LM09}}]
	$\vol{X}{L}=n!\, \vol{\RR^n}{\Delta_v(L)}$.
\end{theorem}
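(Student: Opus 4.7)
The proof is essentially a direct consequence of the machinery developed in the preceding subsections, so the plan is to chain together the relevant results in the right order and verify that the normalizations match up.

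First, I would unpack the definition of the volume of a big line bundle on an $n$-dimensional projective variety: choosing a Cartier divisor $D$ with $L\simeq\OO_X(D)$ and $\cent(v)\not\subseteq\Supp D$,
\[
\vol{X}{L} \equ \limsup_{d\to\infty}\frac{\dim H^0(X,\OO_X(dD))}{d^n/n!}\ .
\]
The key link to the semigroup picture is Remark~\ref{rem:counting}, which identifies $\dim H^0(X,\OO_X(dD))$ with the Hilbert function value $H_{\Sigma_v(D)}(d)$ of the value semigroup. This is itself an instance of Theorem~\ref{thm:Hilbert-function-algebra-semigroup} applied to the section ring $R(X,\OO_X(D))$ viewed as a subalgebra of $K[\Z]$, using that $k$ is algebraically closed and that $v$ has maximal rational rank so that $K_v=k$.

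Next, I would invoke Proposition~\ref{pro:body} to conclude that the graded semigroup $\Sigma_v(D)\subset\Z\times\Gamma_v$ is linearly bounded (when graded by the first component) and, since $D$ is big, has rational rank equal to $n+1$. At this point Theorem~\ref{thm:nob-semigroup-general}(1) applies directly to the abstract graded cancellative torsion-free semigroup $\Sigma_v(D)$: its associated group $(\Sigma_v(D))_\Z$ is finitely generated of rational rank $n+1$, hence
\[
H_{\Sigma_v(D)}(d) \equ \vol{}{\Delta(\Sigma_v(D))}\,d^n + o(d^n)\ ,
\]
where the volume on the right-hand side is exactly $\vol{\R^n}{\Delta_v(L)}$ according to Definition~\ref{def:nob-abstract-semigroup} and Convention~\ref{conv:NOb} (the asymptotic formula in particular ensures the $\limsup$ is a genuine limit).

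Dividing by $d^n/n!$ and passing to the limit gives
\[
\vol{X}{L} \equ n!\,\vol{\R^n}{\Delta_v(L)}\ ,
\]
as claimed. The only subtlety I expect is bookkeeping: one must check that the $\R^n$ in which $\Delta_v(L)$ sits corresponds to the degree-$1$ slice of a full embedding of the $(n+1)$-dimensional semigroup $\Sigma_v(D)$ (so that the implicit volume form is the standard one and the exponent $n = (n+1)-1$ matches), and that the choice of $D$ representing $L$ only translates the body by an integer vector (cf.\ Example~\ref{exa:negative-no}), leaving its Euclidean volume unchanged. Both points are immediate from the definitions, so no real obstacle arises.
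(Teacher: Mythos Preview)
Your proof is correct and follows exactly the approach the paper indicates: it states that the theorem ``is then an immediate consequence of Theorem~\ref{thm:nob-semigroup-general} and Remark~\ref{rem:counting}'', and your argument simply unpacks this implication, correctly invoking Proposition~\ref{pro:body} along the way to verify the hypotheses of Theorem~\ref{thm:nob-semigroup-general}(1).
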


\begin{remark}\label{rem:slices-geometric}
 As was seen in section \ref{sec:ordered-semigroups}, slices of $\Delta_v(L)$ corresponding to isolated semigroups are invariant under order isomorphisms of the value group of $v$, and they are Newton--Okounkov bodies of the corresponding restricted semigroups.
 Hence, equivalent valuations $v$, $v'$ of maximal rank determine Newton--Okounkov bodies $\Delta_v(L)$, $\Delta_{v'}$, whose ``vertical'' slices of all dimensions are equal up to unipotent $\Z$-linear isomorphisms.
 If $v$ hais a flag valuation, it was shown by Lazarsfeld-Musta\c{t}\u{a} that these ``vertical'' slices have a geometric meaning, namely they are Newton--Okounkov bodies of restricted linear series.
 On the other hand, for equivalent valuations $v, v'$ of rank smaller than their rational rank, vertical slices may differ, as the corresponding subgroups of the value group are not invariant by automorphisms of the value group.
\end{remark}

\begin{remark}[Global Newton--Okounkov body]\label{rem:global}
	Given $D_1, \dots, D_r$ effective divisors whose classes form a $\Z$-basis of $N^1(X)$, and a valuation of maximal rank $v$, the Newton--Okounkov body $\Delta_v(R)$ of the algebra
	\[R=\bigoplus_{d\in \Z^r} H^0(X,\mathcal{O}_X(d_1D_1+\dots +d_rD_r))
	\subset K[\Z^r]\]
	is essentially the global Newton--Okounkov body of \cite[Theorem 4.5]{LM09}, and the fact that its slices are the Newton--Okounkov bodies of all numerical divisor classes follows at once from Theorem \ref{thm:restricted_body_semigrp}.
\end{remark}

\section{Concave functions on Newton--Okounkov bodies}
\label{sec:concave-transforms}

Here we recall the construction of concave transforms of filtrations (also known as Okounkov functions), which yields an interesting class of examples of functions on Newton--Okounkov bodies. The first three subsections  are   mostly  expository, and follow the original works and \cite[Section 4]{KMSwB} quite closely. There exist two different points of view regarding the construction of such functions, due to Boucksom--Chen \cite{BC} via partial Newton--Okounkov bodies, and  Witt-Nystr\"om  \cite{WN14} using concave envelopes. The two give rise to the same function.
In subsection \ref{sec:ct-via-graded-linear-series} we show how the approach of \cite{BC} links with our results of section \ref{sec:convex-objects-semigroups}, and in particular that subgraphs of concave transforms are Newton--Okounkov bodies of suitable semigroups.

\subsection{Filtrations}

Let $A$ be a ring and $R$ an $A$-algebra, and denote
$f:A\rightarrow R$ the structure map.
Denote $\sigma(R)$ the set of additive subgroups of $R$, partially ordered by inclusion.

\begin{dfn}
A filtration of $R$ indexed by the
ordered abelian group $\Gamma$ is an order-reversing map
\begin{align*}
\Gamma \overset{F}{\longrightarrow} & \,\sigma(R)\\
\gamma \longmapsto & \,F_\gamma
\end{align*}
Unless otherwise specified, in this work all filtrations $F$ will be 
\begin{enumerate}
\item \emph{complete}, i.e., $\bigcup_{\gamma \in \Gamma} F_\gamma=R$,
\item \emph{multiplicative}, i.e., $F_\gamma \cdot F_\eta \subseteq F_{\gamma+\eta}$ for all 
$\gamma,\eta \in \Gamma$,
\item \emph{$A$-filtrations}, i.e., $f(A)\subset F_0$.
\end{enumerate}
Usually $A=k$ will be the algebraically closed base field, and $R\subseteq K[\Gamma']$ will be a graded subalgebra of a group algebra.
\end{dfn}

\begin{rem}
If $F$ is a $k$-filtration of $R$, then $F_0$ is a sub-$k$-algebra of $R$, and each $F_\gamma$ is a sub-$F_0$-module of $R$. 
\end{rem}

\begin{exm}[Filtrations by ideals] \label{ideal_filtrations}
The multiplicative $R$-filtrations of $R$ satisfy that $F_\gamma=R$ for all $\gamma\le 0$ and
	$F_\gamma$ is an ideal of $R$ for all $\gamma$. 
	As particular instances we have:
	\begin{enumerate}
        \item
        Given an ideal $I\subset R$, 
        \[F_t=
        \begin{cases}
        R &  \text{ if }t\le 0\\
        I & \text{ if }t\ge 1
        \end{cases}\]
        defines a filtration of $R$ by ideals, indexed by $t\in\Z$.
        \item 
        Given an ideal $I\subset R$, 
		\[F_t=
		\begin{cases}
		R &  \text{ if }t\le 0\\
		I^{t} & \text{ if }t\ge 1
		\end{cases}\]
        defines a filtration of $R$ by ideals, indexed by $t\in\Z$.		
		\item If $R$ is a domain, and 
		$v:R\setminus \{0\} \rightarrow \Gamma$ is a 
		nonnegative valuation on $R$, with value group $\Gamma$, then
		\[F_\gamma=
		\{a \in R | v(a)\ge \gamma \}
		\]
        defines a filtration of $R$ by ideals, indexed by $\Gamma$.		
	\end{enumerate}
\end{exm}

\begin{dfn}
	Given a filtration $F$ on $R$, indexed by $\Gamma$,  denote for every $\gamma\in\Gamma$
\[F_\gamma^{+}=\bigcup_{\eta > \gamma} F_\gamma.\]
Clearly $F_\gamma^{+}\subset F_\gamma$, and if $F$ is an 
$k$-filtration, then $F_\gamma^{+}$ is an $k$-vector subspace of $R$.
The quotients $\overline{F}_k=F_\gamma/F_\gamma^{+}$
will be called \emph{components} of the filtration.
The support of the filtration $F$ is the subset of $\Gamma$
defined as $\Supp F=\{\gamma\in \Gamma \mid \overline{F}_k\ne 0\}$.
\end{dfn}

In many cases of interest, such as the last two examples 
in \eqref{ideal_filtrations}, the support is a 
subsemigroup of $\Gamma$, but this is not always the case,
as shown by the first of the examples.

\begin{definition}\label{graded_filtration}
	Let $K=K(X)$ be the field of rational functions on some projective variety $X$ of dimension $n$, 
	let $R\subset K[\Gamma]$ be a $\Gamma$-graded $K$-algebra for some \emph{graded} abelian group $\Gamma$, and let $F$ be a filtration on $R$ indexed by the group $\Gamma_F$.
	Fix the notations 
	$ F_{\tilde{\gamma}}(\gamma)= F_{\tilde{\gamma}}\cap R_{\gamma}$,
	$ F_{\tilde{\gamma}}^+(\gamma)= F_{\tilde{\gamma}}^+\cap R_{\gamma}$, where $\gamma\in\Gamma$, $\tilde{\gamma}\in \Gamma_{F}$.
	We say that ${F}$ is a \emph{homogeneous} filtration if
	${F}_{\tilde{\gamma}}=\bigoplus_{\gamma\in\Gamma}  F_{\tilde{\gamma}}(\gamma)$
	for every $\tilde{\gamma}\in \Gamma_F$.
	We say that a homogeneous filtration ${F}$ on $R$ is \emph{linearly bounded} if the  semigroup generated by the \emph{graded support}
	\[ 
	\left\{ (\gamma,\tilde{\gamma})\in \Gamma\times\Gamma_F \,\left|
	\frac{{F}_{\tilde{\gamma}}(-\gamma)}
	{{F}_{\tilde{\gamma}}^+(-\gamma)} \ne 0\right.\right\}\]
	(which inherits the grading from $\Gamma$) is linearly bounded.
	
	In the next sections we will deal with the case $\Gamma=\Z$ (so $R$ is a subalgebra of the polynomial ring $K[\Z]\cong K[u]$) and $\Gamma_F\subset \R$.
	The \emph{jumping numbers} of a homogeneous filtration ${F}$ on $K[u]$ in degree $d$ are the following $N=\dim R_d$ real numbers:
	\[e_{\min} ({F}(d)) = e_N ({F}(d)) \le \dots \le e_1 ({F}(d)) = e_{\max} ({F}(d))\]
	defined by
	\[e_j ({F}(d)) \deq \sup\{t \in \Gamma_F\subset\R\,|\, \dim {F}_t(d) \ge j\} .\]
	The mass of ${F}(d)$ is $\operatorname{mass}({F}(d))\deq\sum e_j({F}(d))$, and its positive mass is $\operatorname{mass}_+({F}(d))\deq\sum_{e_j({F}(d))>0} e_j({F}(d))$.
	We also denote
	\[ e_{\min}({F})\deq \inf\left\{\frac{t}{d} \,|\, {F}_t(d)\ne R_d\right\}, \qquad
e_{\max}({F})\deq \sup\left\{\frac{t}{d} \,|\, {F}_t(d)\ne 0\right\}.
\]
	It is easy to see that these are finite real numbers if and only if $F$ is linearly bounded.
\end{definition}

Note that
\[e_{\min}( F)=\inf\left\{\frac{e_{\min}({F}(d))}{d}\right\}=
\inf\left\{\left.\frac{t}{d}\,\right| F_t(d)/{F}_t^+(d)\ne 0\right\}.\]
If $e_{\min}({F})=e_{\max}({F})$ then the filtration is trivial; we henceforth assume that $e_{\min}({F})<e_{\max}({F})$.

\begin{dfn}\label{def:bounded-rees}
	The \emph{Rees algebra} of the filtration $F$, indexed by $\Gamma_F$, is defined as the following graded subalgebra of $R[\Gamma_F]$:
	\[
	\Rees(F) = \bigoplus_{t \in \Gamma_F} F_t u^t 
	\subset R[\Gamma_F] \subset K[\Z\times \Gamma_{ F}].\]
	Every subsemigroup of $\Gamma_F$ determines a subalgebra of the Rees algebra; in the sequel we shall be interested in \emph{bounded} Rees algebras. 
	For every $B\in \R$, we define the \emph{Rees algebra of $F$ bounded by $B$} as
\[
\Rees_{B}( F) = \bigoplus_{\substack{t\in \Gamma_{{F}}\\t \ge Bd}}  F_t(d) u^t \subset \Rees(F)
\subset K[\Z\times\Gamma_{ F}].
\]
\end{dfn}

\begin{example}
	If $Z\subseteq X$ is a smooth subvariety contained in the smooth locus of $X$, then the $\ord_Z$ is a discrete valuation on $K$, and therefore it determines a filtration on $R$ as in Example  \ref{ideal_filtrations}, which is homogeneous and linearly bounded. As explained in \cite{WN} (see also \cite{RT1,RT2,Szek}), test configurations also give rise to such filtrations on section rings of ample (or at least big and nef) line bundles. 
\end{example}

\subsection{Definitions of concave transform}
The first approach to concave transform functions is the one taken in \cite{WN14} (see also \cite[Subsection 4.1]{KMSwB}), namely as concave envelopes, which can be used in concrete computations to some extent. The functions are defined in two steps, first on the dense set of valuative points in $\Delta_v(L)$, then on the whole of $\Delta_v(L)$ via convex geometry. 

\begin{defi}[Concave transform I]
With notation as above, let $\alpha\in \Delta_v(L)$  be a valuative point. We define
\[
\widetilde{\phi_{ F}}(v) \deq \lim_{d\to\infty} \frac{1}{d} \sup\{t\in\RR\mid \exists\, s\in  F_t(d) \text{ such that } v(s) = d\alpha\}\ .
\]
The existence of the limit follows from Fekete's Lemma \cite{Fekete} (cf. the Appendix in \cite{KMSwB}). The function $\widetilde{\phi}_{ F}$  is defined on a dense subset of $\Delta_v(L)\cap\QQ^n$, so we can use a concave envelope (Definition \ref{def:concave-envelope}) to pass to the whole Newton--Okounkov body. Thus, 
the \emph{concave transform} $\phi_{\Finv}\colon \Delta_v(L)\rightarrow\R$ of the filtration $ F$ on $\Delta_v(L)$ is defined to be the closed convex envelope $(\widetilde{\phi}_{ F})^c$. 
\end{defi}

\begin{rem}
Whenever we believe that it does not lead to confusion, we will use $\ord_Z$ for the concave transform of the filtration arising from the valuation $\ord_Z$ as well. 
Assume that $X$ is smooth, and $L$ is a big line bundle.
 A quick consequence of the definition is that 
\[
\inf_{\Delta_v(L)} \ord_Z \dgeq \ord_Z (\| L\| )\ ,
\] 
where the right-hand side is the asymptotic order of vanishing of $L$ along $Z$. Equality is not expected to hold in general (cf. \cite[Example 2.7]{KL_noninf}). 
\end{rem}

\noindent
Concrete examples computed via this definition can be found in \cite[Subsection 4.4]{KMSwB} for instance.

\medskip
\label{sec:ct-via-graded-linear-series}

The second approach, via graded linear series, is due to 
Boucksom--Chen \cite{BC} (see also \cite{BKMS}). With notation as so far, let $t\in\RR$ be arbitrary. and set 
\[
V_t(d) \deq  F_{td}(d) \ .
\]
It is immediate that $V_t(\bullet)$ form a graded subalgebra of $R(X,L)$, corresponding to a graded linear series $\bigsqcup_{d\ge 0} \P(V_t(d))$, and the Newton--Okounkov bodies $\Delta_v(V_t(\bullet))$ are a non-increasing collection of compact convex subsets of $\Delta_v(L)$. 

\begin{definition}[Concave transform II]
With notation as above, the \emph{concave transform} of the filtration $ F$ is defined to be 
\[
\phi_{ F} (\alpha) \deq \sup \{ t\in\RR\mid \alpha\in\Delta_v(V_t(\bullet)) \}\ .
\]
\end{definition}

\begin{remark}\label{rmk:concave-transforms}
	It is known that the two definitions of concave transforms agree (see \cite[Remark 1.10]{BC} and \cite[Lemma 4.9]{KMSwB}). We give below a semigroup-theoretic proof.
\end{remark}

\subsection{Properties of concave transforms}

We collect most of the known properties of concave transforms of filtrations, with special attention to filtrations given by order of vanishing along some subvariety. 

\begin{theorem}[Continuity of concave transforms, \cite{KMSwB}, Theorem 1.1,  \cite{BKMS}, Theorem B]
	\begin{enumerate}
		\item Let $X$ be an $n$-dimensional projective variety over $K$, $L$ a $\QQ$-effective line bundle on $X$, $v$ a valuation of $K(X)$ of rational rank $n$, $ F$ a linearly bounded filtration on $R(X,\mathcal{O}_X(L))$. If the Newton--Okounkov body $\Delta_v(L)$ is a polytope (not necessarily rational), then $\phi_{ F}$ is continuous on the whole of $\Delta_v(L)$. 
		\item There exists a projective variety $X$, a big line bundle $L$, an admissible flag $\ybul$ on $X$, and a divisorial valuation $\ord_Z$ of $K(X)$ such that the concave transform $\ord_Z$ is \emph{not} continuous on $\Delta_v(L)$.   
	\end{enumerate}
\end{theorem}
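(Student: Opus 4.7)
The plan is to handle the two parts separately, both exploiting the semigroup-theoretic framework from Section 2.

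For part (1), the strategy is two-step. First, $\phi_F$ is concave and upper semi-continuous by its construction as a closed concave envelope (see Definition \ref{def:concave-envelope}), which immediately yields continuity on the relative interior of $\Delta_v(L)$. The only obstruction is a possible downward jump at boundary faces. I would rule this out by induction on the dimension of the polytope, showing that for each face $F$ of $\Delta_v(L)$ the restriction $\phi_F|_F$ agrees with the concave transform of a naturally induced filtration on a restricted graded linear series. To set this up, apply Theorem B to identify the subgraph of $\phi_F$ with a Newton--Okounkov body $\Delta_{\widehat{v}}(\widehat{L})$ in dimension $n+1$, and then apply Theorem \ref{thm:restricted_body_semigrp} to identify the vertical slice of that subgraph over the face $F$ with a restricted Newton--Okounkov body. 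Since faces of polytopes are polytopes of strictly smaller dimension, the inductive hypothesis applies to the restriction, so $\phi_F$ is continuous on every face, and hence everywhere on $\Delta_v(L)$.

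The main obstacle is ensuring that the restriction to a face commutes with the concave envelope construction, i.e. that $(\widetilde{\phi_F})^c|_F = (\widetilde{\phi_F}|_F)^c$. Semigroup-theoretically this reduces to an asymptotic convexity statement (Definition \ref{def:asymptotically-convex}) for the subgraph semigroup along every face, and this is precisely where the polytope hypothesis enters: when $\Delta_v(L)$ is a polytope, its cone has only finitely many facets, so Khovanskii's Theorem \ref{thm:approx_semigroup} can be applied uniformly in each direction transverse to a face. The base case of the induction is the zero-dimensional situation, where the concave transform reduces to a single real number and continuity is automatic.

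For part (2), the approach is to exhibit an explicit counter-example, namely a big line bundle $L$ on a smooth projective variety $X$ (most naturally a surface) for which $\Delta_v(L)$ is not a polytope and some divisorial valuation detects the non-polyhedral boundary. A promising candidate is a surface carrying infinitely many negative curves, such as a suitable blow-up of $\PP^2$ at very general points, together with a big line bundle $L$ whose Newton--Okounkov body with respect to a flag $\ybul$ has a genuinely curved portion of its boundary. Taking $Z$ to be one of the exceptional curves (so that its asymptotic order of vanishing depends delicately on boundary-chamber data of $L$), one should be able to arrange that the filtered bodies $\Delta_v(V_t(\bullet))$ jump discontinuously in size at some critical value $t_0$, producing a jump discontinuity of $\widetilde{\phi_F}$. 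The delicate step is to verify that this jump survives the closed concave envelope — i.e., that concavity from nearby points does not fill in the gap — so that $\phi_F = \ord_Z$ genuinely fails to be continuous at the corresponding boundary point of $\Delta_v(L)$.
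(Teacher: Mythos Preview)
The paper does not actually prove this theorem; it is quoted from \cite{KMSwB} and \cite{BKMS} without proof, so there is no ``paper's own proof'' to compare against. That said, your proposal has genuine problems in both parts.

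For part~(1), your plan to invoke Theorem~B of the present paper is illegitimate: Theorem~\ref{thm:integrals are volumes} identifies the subgraph of $\phi_{\ord_Z}$ as a Newton--Okounkov body only for \emph{divisorial} filtrations, whereas the statement you are proving is for an arbitrary linearly bounded filtration $F$. More seriously, the inductive scheme of restricting to faces is not well grounded: a face of $\Delta_v(L)$ need not be the Newton--Okounkov body of any natural restricted linear series (Theorem~\ref{thm:restricted_body_semigrp} produces slices over \emph{interior} rational points, not over boundary faces), so there is no evident ``naturally induced filtration'' on the face to which you could apply the inductive hypothesis. The actual argument in \cite{KMSwB} is far more direct: $\phi_F$ is concave and upper semicontinuous by construction, and a classical result of Gale--Klee--Rockafellar states that any upper semicontinuous concave function on a polytope is automatically continuous. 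No induction, no semigroup machinery, and no restriction on the filtration are needed.

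For part~(2), your approach cannot work. You propose to find the counterexample on a surface, but the remark immediately following the theorem in the paper recalls from \cite{KLM} that on a smooth surface $\Delta_{Y_\bullet}(L)$ is \emph{always} a polygon, so by part~(1) the concave transform is always continuous in dimension two. The counterexample in \cite{BKMS} is necessarily in dimension at least three; it is built so that $\Delta_v(L)$ has a curved boundary portion along which the concave transform genuinely jumps. Your instinct that the discontinuity must occur at a non-polyhedral boundary point is correct, but the search space has to be threefolds or higher.
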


\begin{rem}
	According to \cite[Theorem B]{KLM}, the Newton--Okounkov body $\dyl$ will always be a polygon provided $X$ is a smooth surface. Hence concave transforms are always continuous in dimension two. 
\end{rem}

Concave transforms exhibit the formal properties expected of asymptotic invariants. 

\begin{theorem}[Formal properties]\label{thm:formal properties}
	With notation as above, 
	\begin{enumerate}
		\item $($Homogeneity$)$ For each $a\in\NN$ let $ F_a$ be the filtration defined by $( F_a)_{\gamma}= F_{a\gamma}$. Then $\phi_{ F_{a}}(a\alpha) \equ a\cdot \phi_{ F}(\alpha)$  for all $\alpha\in\Delta_v(L)$.  
		\item $($Numerical invariance $)$ If $L'\equiv L$ are two numerically equivalent big line bundles, then $\phi_{ F^L} \equ \phi_{ F^{L'}}$ as functions on $\Delta_v(L)=\Delta_v(L')$.  
	\end{enumerate}	
\end{theorem}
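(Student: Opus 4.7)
My plan is to reduce both statements to the Boucksom--Chen description
$\phi_F(\alpha) \equ \sup\{t\in\RR\mid \alpha\in\Delta_v(V^F_t(\bullet))\}$
given in Definition II, together with the scaling of Newton--Okounkov bodies under Veronese embeddings and their numerical invariance on big line bundles. Throughout, the existence and agreement of the two definitions of $\phi_F$ (Remark \ref{rmk:concave-transforms}) means I can switch freely between formulations.

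For part (1), unwinding the definition of $F_a$ yields
\[
V^{F_a}_t(d) \equ (F_a)_{td}(d) \equ F_{atd}(d),
\]
so, read inside the appropriate Veronese subring of $R(X,L)$, the graded subalgebra $V^{F_a}_t(\bullet)$ is identified, up to a Veronese rescaling by $a$, with $V^F_{t/a}(\bullet)$. Since Veronese embeddings dilate Newton--Okounkov bodies by the corresponding factor, I would deduce
\[
\Delta_v\bigl(V^{F_a}_t(\bullet)\bigr) \equ a\cdot\Delta_v\bigl(V^F_{t/a}(\bullet)\bigr).
\]
Substituting into the Boucksom--Chen formula evaluated at $a\alpha$ and making the change of variable $s=t/a$,
\[
\phi_{F_a}(a\alpha) \equ \sup\{t\mid a\alpha\in a\,\Delta_v(V^F_{t/a}(\bullet))\} \equ a\sup\{s\mid \alpha\in\Delta_v(V^F_s(\bullet))\} \equ a\phi_F(\alpha).
\]

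For part (2), with $L\equiv L'$ big and numerically equivalent, the first observation is that $\Delta_v(L)=\Delta_v(L')$ as convex bodies, by the numerical invariance of Newton--Okounkov bodies for big divisors (Lazarsfeld--Musta\c{t}\u{a} \cite[Proposition 4.1]{LM09}, extended to general valuations of maximal rational rank using the methods of \cite{KK12, Bou14}). Next, I would show that for every $t\in\RR$,
\[
\Delta_v\bigl(V^{F^L}_t(\bullet)\bigr) \equ \Delta_v\bigl(V^{F^{L'}}_t(\bullet)\bigr).
\]
For the filtrations of interest here (valuative ones such as $\ord_Z$, test-configuration filtrations, and their multiplicative generalisations) the filtration depends only on the numerical class of $L$, so $V^{F^L}_t(\bullet)$ and $V^{F^{L'}}_t(\bullet)$ have the same asymptotic structure. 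Applying the Lazarsfeld--Musta\c{t}\u{a} numerical-invariance argument, suitably adapted to truncated graded linear series via Fujita-type approximation, gives the equality of their Newton--Okounkov bodies. Taking suprema in the Boucksom--Chen formula then yields $\phi_{F^L}=\phi_{F^{L'}}$ pointwise on $\Delta_v(L)$.

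The main obstacle is the second step of part (2): one must formulate precisely in what sense $F^L$ and $F^{L'}$ are the ``same'' filtration on the two section rings, and verify that the numerical invariance of \cite{LM09} descends to every truncated subalgebra $V^F_t(\bullet)$. The bulk of the work goes into the latter, via an approximation by finitely generated subalgebras carrying compatible valuations of maximal rational rank; once established, both homogeneity and numerical invariance are straightforward consequences of the Boucksom--Chen formula.
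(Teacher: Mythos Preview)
The paper does not actually prove this theorem: its entire proof is the sentence ``This is \cite[Theorem 4.14 and Proposition 5.6]{KMSwB}.'' So there is no argument in the paper to compare against; you are supplying one where the authors chose to cite.

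Your overall strategy---reduce everything to the Boucksom--Chen formula $\phi_F(\alpha)=\sup\{t:\alpha\in\Delta_v(V^F_t(\bullet))\}$---is the natural one, but part~(1) contains a genuine slip. From the definition $(F_a)_\gamma=F_{a\gamma}$ as a filtration on the \emph{same} ring $R(X,L)$, one gets
\[
V^{F_a}_t(d)=(F_a)_{td}(d)=F_{atd}(d)=V^F_{at}(d),
\]
an equality of subspaces of $R_d$ with no Veronese involved. Plugging this into the Boucksom--Chen formula yields $\phi_{F_a}=\tfrac{1}{a}\phi_F$ on $\Delta_v(L)$, not the displayed identity. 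Your claimed Veronese identification with $V^F_{t/a}$ does not hold under this reading: $V^F_{t/a}(ad)=F_{td}(ad)$, which is not $F_{atd}(d)$. What \emph{would} make your computation correct is interpreting $F_a$ as the pullback filtration on $R(X,aL)$, i.e.\ $(F_a)_t(d)=F_t(ad)$; then indeed $V^{F_a}_t(d)=V^F_{t/a}(ad)$, the Veronese scaling gives $\Delta_v(V^{F_a}_t)=a\,\Delta_v(V^F_{t/a})$, and the identity $\phi_{F_a}(a\alpha)=a\phi_F(\alpha)$ follows on $\Delta_v(aL)=a\Delta_v(L)$, where evaluating at $a\alpha$ finally makes sense. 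Since the statement as printed is ambiguous on exactly this point (under the literal reading, $a\alpha$ need not even lie in $\Delta_v(L)$), you should say explicitly which interpretation you are using and why it is the intended one.

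For part~(2) your outline is reasonable, but the crucial step---that the Newton--Okounkov bodies of the truncated series $V^{F^L}_t(\bullet)$ and $V^{F^{L'}}_t(\bullet)$ coincide---is asserted rather than argued. The Lazarsfeld--Musta\c{t}\u{a} numerical invariance applies to complete linear series, and extending it to these graded subseries requires exactly the kind of argument carried out in \cite{KMSwB}; invoking ``Fujita-type approximation'' without saying how it interacts with the filtration is where the real content lies. You also need to pin down what $F^L$ and $F^{L'}$ mean for a general filtration (as you yourself flag), since for an abstract filtration on $R(X,L)$ there is no canonical transport to $R(X,L')$.
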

\begin{proof} 
	This is \cite[Theorem 4.14 and Proposition 5.6]{KMSwB}
\end{proof}

In lucky cases invariants of the functions $\phi_{ F}$ will not depend on  the domain $\Delta_v(L)$, or more precisely, the choice of $\ybul$ or $v$. In this case they give rise to asymptotic invariants of the line bundle $L$. 

\begin{theorem}[Local positivity invariants from concave transforms]\label{thm:local positivity invariants}
	Let $X$ be a smooth projective variety, $Z\subseteq X$ a smooth subvariety, $L$ a big line bundle on $X$, and $\ybul$ an arbitrary admissible flag on $X$. Then the numbers 
	\[
	\max_{\dyl} \ord_Z\ \ \ \text{and}\ \ \ \int_{\dyl} \ord_Z
	\]
	are independent of the choice of $\ybul$. 
\end{theorem}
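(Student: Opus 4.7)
My plan is to reduce both quantities to invariants of the filtration $F$ induced by $\ord_Z$ on the section ring $R(X,L)$, making independence from the admissible flag $\ybul$ manifest. The key device is the Boucksom--Chen description (Concave transform~II)
\[
\phi_F(\alpha)\;=\;\sup\bigl\{\,t\in\RR:\alpha\in\Delta_v(V_t(\bullet))\,\bigr\},\qquad V_t(d)\deq F_{td}(d),
\]
which exhibits $\phi_F$ via a decreasing family of graded linear series. Since each $V_t(\bullet)$ depends only on $F$ (not on $v$ or $\ybul$), and the volume of a graded linear series is a well-defined numerical invariant, both quantities we are after should admit a $v$-free expression.

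For the maximum I read off directly
\[
\max_{\Delta_v(L)}\phi_F\;=\;\sup\{t:\Delta_v(V_t(\bullet))\neq\emptyset\}\;=\;\sup\{t/d:F_t(d)\neq 0\}\;=\;e_{\max}(F),
\]
and $e_{\max}(F)=\sup\{\ord_Z(s)/d:0\neq s\in H^0(X,dL),\ d\geq 1\}$ is visibly intrinsic to $(L,Z)$. For the integral, since $\ord_Z\geq 0$ on sections one has $0\leq\phi_F\leq e_{\max}(F)$, so the layer-cake formula gives
\[
\int_{\Delta_v(L)}\phi_F\,d\lambda\;=\;\int_0^{e_{\max}(F)}\vol{\RR^n}{\{\phi_F\geq t\}}\,dt.
\]
Combining the definition of $\phi_F$ with the fact that $\{\Delta_v(V_t(\bullet))\}_t$ is a decreasing family of compact convex sets, one obtains $\{\phi_F\geq t\}=\Delta_v(V_t(\bullet))$ for all $t$ outside a countable set (the jump set of the monotone volume function $t\mapsto\vol{\RR^n}{\Delta_v(V_t(\bullet))}$). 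Applying the Kaveh--Khovanskii/Lazarsfeld--Musta\c{t}\u{a} volume theorem to each $V_t(\bullet)$---which is big for $t<e_{\max}(F)$ by linear boundedness of the divisorial filtration $F$---gives $\vol{\RR^n}{\Delta_v(V_t(\bullet))}=\vl{V_t(\bullet)}/n!$, hence
\[
\int_{\Delta_v(L)}\phi_F\,d\lambda\;=\;\frac{1}{n!}\int_0^{e_{\max}(F)}\vl{V_t(\bullet)}\,dt,
\]
whose right-hand side is determined entirely by the Hilbert functions $d\mapsto\dim F_{td}(d)$, hence by $(L,Z)$ alone.

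The main obstacle is the set-theoretic identification $\{\phi_F\geq t\}=\Delta_v(V_t(\bullet))$ up to measure zero. The inclusion $\supseteq$ is immediate from the definition. For the reverse inclusion I would use that $\Delta_v(V_t(\bullet))$ is the closed convex hull of the valuative points $v(s)/d$ with $s\in V_t(d)$, together with the equivalence of the two definitions of the concave transform (Remark~\ref{rmk:concave-transforms} / \cite[Remark 1.10]{BC}) and the upper-semicontinuity of $\phi_F$; away from the at-most-countably many jump points of the monotone volume function the equality is then exact, which is enough for the integral. Linear boundedness of $F$ (so that $e_{\max}(F)<\infty$) and bigness of $V_t(\bullet)$ for $t<e_{\max}(F)$ follow from $L$ being big and from standard linear boundedness of divisorial filtrations on section rings.
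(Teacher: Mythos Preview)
Your approach is correct and is essentially what the paper does by citation: the paper simply refers to \cite[Proposition~2.2, Theorem~2.4]{DKMS2} for the maximum (which identify it with $\mu(L;Z)=e_{\max}(F)$, exactly as you do) and to \cite[Corollary~1.13]{BC} for the integral (whose proof is precisely the layer-cake/volume-of-slices computation you sketch). So you are unpacking the references rather than taking a different route.

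One small inaccuracy worth fixing: the claim ``$V_t(\bullet)$ is big for $t<e_{\max}(F)$ by linear boundedness'' is not quite right and not quite what you need. Linear boundedness only gives $e_{\max}(F)<\infty$; it does not force $V_t(\bullet)$ to have positive volume for every $t<e_{\max}(F)$. What you actually use is the volume identity $\vol{\RR^n}{\Delta_v(V_t(\bullet))}=\vl{V_t(\bullet)}/n!$, and for this it suffices that $V_t(\bullet)$ contains an ample series (which it does here, since $V_t(d)\supseteq H^0(X,dL-\lceil td\rceil Z)$ and for small enough $t$ this contains an ample series, whence for all $t$ by the filtration structure) or, more cheaply, that the formula holds trivially when both sides vanish. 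Either way the conclusion is unaffected.
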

\begin{proof}
	The first number is independent of the choice of the flag by \cite{DKMS2}, Proposition 2.2 and Theorem 2.4 (note that both proofs go through verbatim in the current setting), the second  one is \cite[Corollary 1.13]{BC}. 
\end{proof}

\begin{rem}
	From the proof of \cite[Proposition 2.2]{DKMS2} we see that 
	\[
	\max_{\dyl} \ord_Z \equ \mu(L;Z) \deq \sup \{ t\geq 0\mid \pi^*L-tE \text{ is pseudoeffective} \}
	\]
	where $\pi\colon Y\to X$ is the blowing-up of $X$ along $Z$ with exceptional divisor $E$, therefore the function $\ord_Z$ recovers a piece of the birational geometry of $X$. 
	On the other hand the integral of $\ord_Z$ is not known to be expressable in such terms.  
\end{rem}

\begin{definition}
	Let $X$ be a projective variety, $L$ a line bundle on $X$, $v$ a valuation on $K(X)$ of maximal rational rank, and $w$ a divisorial valuation on $K(X)$. We define 
	\[
	\iota(L;w) \deq \frac{1}{\vol{X}{L}}\cdot \int_{\Delta_v(L)} \phi_w\ .
	\]
	If $w$ is order of vanishing along a smooth subvariety $Z\subseteq X$ then we write $\iota(L;Z)$ for $\iota(L;w)$.
\end{definition}

\begin{lemma}
	With notation as above, if $\pi\colon \tX\to X$ is a proper birational morphism, then $\iota(\pi^*L;w)=\iota(L;w)$. 
\end{lemma}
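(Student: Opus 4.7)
The plan is to show that all ingredients in the definition of $\iota(L;w)$ match up under pullback, so that we are integrating the same function over the same body and dividing by the same volume.

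First I would observe that since $\pi\colon\tX\to X$ is proper and birational, pullback induces an isomorphism of fields $\pi^*\colon K(X)\to K(\tX)$, and for every $d\ge 0$ an isomorphism $\pi^*\colon H^0(X,L^{\otimes d})\to H^0(\tX,(\pi^*L)^{\otimes d})$ (after fixing a representative Cartier divisor $D$ with $\cent(v)\not\subset\Supp D$, we may also arrange that the center of $v$ on $\tX$ avoids $\Supp\pi^*D$, as in Example~\ref{ex:proper transform flag}). Under this identification the section rings $R(X,\OO_X(D))$ and $R(\tX,\OO_{\tX}(\pi^*D))$ are canonically isomorphic as $\Z$-graded $k$-subalgebras of $K[u]$, via the identification $K(X)=K(\tX)$.

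Next I would transport the valuation $v$ from $K(X)$ to $K(\tX)$ via $\pi^*$; this is legitimate because the definitions in Subsection~2.D only use $v$ as a $k$-valuation of the function field. Applying Definition~\ref{defn:NOb II} to the two identified section rings with this common valuation of maximal rational rank $n$ yields $\Sigma_v(L)=\Sigma_v(\pi^*L)$, hence $\Delta_v(L)=\Delta_v(\pi^*L)$ as convex bodies in $\R^n$. Likewise, the filtration $F^L$ induced by $w$ on $R(X,\OO_X(D))$, defined by $F^L_t(d)=\{s\in H^0(X,L^{\otimes d})\mid w(s)\ge t\}$, corresponds under $\pi^*$ exactly to the analogous filtration $F^{\pi^*L}$ on $R(\tX,\OO_{\tX}(\pi^*D))$, since $w(s)=w(\pi^*s)$ for every section $s$. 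Therefore the graded linear series $V_t(\bullet)$ used in the Boucksom--Chen definition of the concave transform coincide on both sides, and consequently $\phi_w^L=\phi_w^{\pi^*L}$ as functions on the common body $\Delta_v(L)=\Delta_v(\pi^*L)$.

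Finally, birational invariance of the volume gives $\vol{X}{L}=\vol{\tX}{\pi^*L}$ (this is standard; it also follows from Theorem~\ref{thm:Hilbert-function-algebra-semigroup} together with the identification of semigroups above). Combining these three equalities,
\[
\iota(\pi^*L;w)=\frac{1}{\vol{\tX}{\pi^*L}}\int_{\Delta_v(\pi^*L)}\phi_w^{\pi^*L}=\frac{1}{\vol{X}{L}}\int_{\Delta_v(L)}\phi_w^{L}=\iota(L;w),
\]
which is the desired equality. The only delicate point is the bookkeeping around the centre of $v$ on $X$ versus $\tX$ and the choice of representative divisor $D$ ensuring the constructions of Definitions~\ref{defn:NOb II} and \ref{graded_filtration} are well defined on both sides; this is routine given Remark~\ref{rem:val_divisors} and the fact that $\pi$ is an isomorphism over a dense open.
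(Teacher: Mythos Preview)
Your proof is correct and is exactly the unpacking of the paper's one-line justification ``This is immediate from the definition'': you identify the section rings, the valuation $v$, and the filtration by $w$ under $\pi^*$, so that the body, the concave transform, and the volume all coincide. The only point worth making explicit (which you implicitly use) is that the isomorphism $H^0(X,L^{\otimes d})\cong H^0(\tX,(\pi^*L)^{\otimes d})$ relies on $X$ being normal, which is given by the standing hypothesis that $X$ is smooth.
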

\begin{proof}
	This is immediate from the definition. 	
\end{proof}

Using  this, we obtain a reasonably concrete formula for $\ilx$ via Fubini's theorem (cf. \cite[Theorem 2.24]{BKMS}).

\begin{proposition} \label{prop:integral formula}
	With notation as above, 
	\[
	\ilx \equ \frac{1}{\vol{\tX}{\pi^*L}} \cdot \int_{0}^{\infty} t\cdot \vol{\tX|E}{\pi^*L-tE}\, dt \ .
	\]
\end{proposition}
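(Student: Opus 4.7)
The plan is to combine the Boucksom--Chen description of the concave transform with a layer-cake identity and the derivative formula for the volume. First, by the preceding lemma one has $\iota(L;x) = \iota(\pi^*L;\ord_E)$, so that
\[
\ilx \cdot \vol{\tX}{\pi^*L} \equ \int_{\Delta_v(\pi^*L)} \phi_{\ord_E}\, d\alpha\ ,
\]
for any valuation $v$ on $K(\tX)$ of maximal rational rank (whose existence is ensured, perhaps after a further birational modification, by Remark~\ref{thm:flag-valuation}).

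Next, I would apply Remark~\ref{rmk:concave-transforms} to rewrite $\phi_{\ord_E}$ in the Boucksom--Chen form, $\phi_{\ord_E}(\alpha)=\sup\{t\mid\alpha\in\Delta_v(V_t(\bullet))\}$, with $V_t(d)\deq F_{td}(d)$. Since the filtration $F$ is induced by $\ord_E$, the graded piece $V_t(d)$ is exactly the image of $H^0\bigl(\tX,\sOX(d\pi^*L-\lceil td\rceil E)\bigr)$ in $H^0(\tX,\sOX(d\pi^*L))$. Consequently, up to a boundary set of measure zero,
\[
\{\alpha\in\Delta_v(\pi^*L)\mid \phi_{\ord_E}(\alpha)\geq t\}\equ \Delta_v(V_t(\bullet))\ ,
\]
and by Theorem~\ref{thm:Hilbert-function-algebra-semigroup} the Lebesgue measure of this slice is proportional to $\vol{\tX}{\pi^*L-tE}$. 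The linear boundedness of the filtration, inherited from $\mu(L;x)<\infty$, ensures the integrands are supported on $[0,\mu(L;x)]$.

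The third step is the layer-cake identity applied to the nonnegative function $\phi_{\ord_E}$:
\[
\int_{\Delta_v(\pi^*L)} \phi_{\ord_E}\, d\alpha \equ \int_0^{\mu(L;x)} \bigl|\{\phi_{\ord_E}\geq t\}\bigr|\, dt\ ,
\]
which by the previous step equals a dimensional constant times $\int_0^{\mu(L;x)}\vol{\tX}{\pi^*L-tE}\, dt$. Finally, invoking the derivative formula of Boucksom--Favre--Jonsson / Lazarsfeld--Musta\c{t}\u{a} (valid on $\tX$ since $E$ is not contained in $\mathbf{B}_+(\pi^*L)$ for $L$ ample on $X$),
\[
\tfrac{d}{dt}\vol{\tX}{\pi^*L-tE} \equ -n\,\vol{\tX|E}{\pi^*L-tE}\ ,
\]
or equivalently the integrated form $\vol{\tX}{\pi^*L-tE}=n\int_t^{\mu(L;x)}\vol{\tX|E}{\pi^*L-sE}\,ds$, swapping the order of integration produces $\int_0^{\mu(L;x)} t\cdot\vol{\tX|E}{\pi^*L-tE}\,dt$, yielding the claimed formula up to the normalising constants dictated by the paper's conventions.

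The main obstacle is the identification of $\{\phi_{\ord_E}\geq t\}$ with $\Delta_v(V_t(\bullet))$: strictly speaking $\phi_{\ord_E}$ is defined as a closed concave envelope, so agreement on the boundary is not automatic. This is handled by noting that the nested family $\{\Delta_v(V_t(\bullet))\}_t$ is upper semicontinuous in $t$ in the Hausdorff metric, so the sublevel sets differ from $\Delta_v(V_t(\bullet))$ only along a measure-zero set of values of $t$ and along measure-zero boundary pieces of the bodies themselves, neither of which affects the integrals. All remaining ingredients---the derivative formula and Fubini---apply routinely.
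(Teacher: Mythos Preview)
Your argument is correct (up to the constants you flag), but it takes a different route from the paper's. The paper exploits the freedom in choosing the valuation $v$: it picks an admissible flag $\ybul$ on $\tX$ with $Y_1=E$, so that the concave transform $\phi_{\ord_E}$ becomes simply the first coordinate function $(t_1,\dots,t_n)\mapsto t_1$ on $\Delta_{\ybul}(\pi^*L)$. Fubini in the $t_1$-direction together with the Lazarsfeld--Musta\c{t}\u{a} slicing theorem (identifying the slice at height $t$ with $\Delta_{\ybul|E}(\pi^*L-tE)$, whose volume is the restricted volume) then gives the formula in one line, with no need for the layer-cake identity or the differentiation formula for the volume.

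Your approach instead works for an arbitrary $v$: you use layer-cake to express $\int\phi_{\ord_E}$ as $\int_0^\infty\vol{}{\Delta_v(V_t(\bullet))}\,dt\propto\int_0^\infty\vol{\tX}{\pi^*L-tE}\,dt$, and then convert volumes to restricted volumes via the BFJ/LM derivative formula and an integration by parts. This is a legitimate alternative, and it has the conceptual advantage of showing the result is really about $\vol{\tX}{\pi^*L-tE}$ rather than any particular Newton--Okounkov realisation. The cost is an extra external input (the derivative formula) and the hypothesis you note, $E\not\subseteq\mathbf{B}_+(\pi^*L)$, which you justify only for $L$ ample whereas the proposition is stated for $L$ big; the paper's flag argument sidesteps this by appealing directly to the slicing theorem rather than differentiating the volume.
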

\begin{proof}
	Let $\pi\colon \tX\to X$ be a proper birational morphism for which $w=\ord_E$ for an irreducible prime divisor $E$. Consider an admissible flag $\ybul$ on $\tX$ with $Y_1=E$. 
	By Fubini's theorem and the Lazarsfeld--Musta\c t\u a slicing theorem we obtain 
	\begin{eqnarray*}
		\ilx  & \equ &   \frac{1}{\vol{X}{L}}\cdot \int_{\Delta_v(L)} \phi_w  \equ \frac{1}{\vol{\tX}{\pi^*L}}\cdot \int_{\Delta_{\ybul}(\pi^*L)} \phi_{\ord_E}   \\
		& = & \frac{1}{\vol{\tX}{\pi^*L}} \cdot  \int_{0}^{\infty} \left( \int_{\Delta_{\ybul|E}(\pi^*L-tE)} \phi_{\ord_E} \right) dt \\ 
		& = & \frac{1}{\vol{\tX}{\pi^*L}} \cdot \int_{0}^{\infty} t\cdot \vol{\tX|E}{\pi^*L-tE}\, dt\ .
	\end{eqnarray*}
\end{proof}

\subsection{Filtered Newton--Okounkov body}

Boucksom--Chen define in \cite[Definition 1.9]{BC} the \emph{filtered Newton--Okounkov body} determined by $ F$ as 
\[\widehat{\Delta}(L, F)\deq\{(\alpha,t)\in\Delta_v(L)\times\R \,|\, 0\le t \le \phi_{ F}(\alpha)\},\]
and it is not hard to see, using Remark \ref{rmk:concave-transforms}, that it is equal to
\begin{equation}\label{eq:slices-BC}
\{(\alpha,t)\in\Delta_v(L)\times\R \,|\, 0\le t, \text{ and }\alpha\in\Delta_v(V_t(\bullet))\}.
\end{equation}In other words, the Boucksom--Chen Newton--Okounkov body is built from its "horizontal" slices, which are, at each level $t$, the Newton--Okounkov body of the graded linear series $V_t(\bullet)$.
Note that this definition throws away the possible regions where the concave transform takes negative values; this makes sense in the arithmetic setting which was the main motivation of \cite{BC}, because then the integral of the positive part of the concave transform turns out to be equal to the arithmetic volume. 
It also makes sense in our case of interest when the filtration comes from $\ord_Z$ which is nonnegative.
However, the integral of the concave transform over the whole Newton--Okounkov body is meaningful as well; at least in the arithmetic toric case it equals the height of $X$ (see \cite{BGMPS} where both integrals are considered).
In general it is worthwile to extend the filtered Newton--Okounkov body towards the negative-$t$ halfspace, replacing the lower bound $0\le t$ in the definition of $\widehat{\Delta}(L, F)$ by $e_{\min}( F)\le t$, to keep all the information encoded by $\phi_{{F}}$.
To fix notation, if $B\in \R$ equals either $0$ or $e_{\min}({{F}})$ we denote
\[\widehat{\Delta}(L, F)_B\deq\{(\alpha,t)\in\Delta_v(L)\times\R \,|\, B\le t \le \phi_{ F}(\alpha)\}.\]

Let now $\Gamma_{{F}}=\langle\Supp  F \rangle_\Z \subset\R$ be the group generated by the support of $ F$ (which we recall need not even be a semigroup itself) and define
\[\Sigma_{v, F,B}\deq\{ (m,x,t) \in \Z^{n+1}\times \Gamma_{ F} \,|\,
 t\ge Bm, x\in v({F}_t(m))\} \subset \R^{n +2}. \]

\begin{proposition}\label{pro:bc-semigroup}
	The semigroup $\Sigma_{v, F,B}$, graded by first component, is linearly bounded and asymptotically convex. 
\end{proposition}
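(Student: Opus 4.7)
The proof proposal splits naturally into the two assertions.

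\textbf{Linear boundedness.} First I would verify that $\Sigma_{v,F,B}$ is indeed a semigroup: given $(m_i,x_i,t_i)\in\Sigma_{v,F,B}$ for $i=1,2$, pick $s_i\in F_{t_i}(m_i)$ with $v(s_i)=x_i$; then multiplicativity of $F$ yields $s_1s_2\in F_{t_1+t_2}(m_1+m_2)$ with $v(s_1s_2)=x_1+x_2$, and $t_1+t_2\ge B(m_1+m_2)$. For linear boundedness, I would bound each coordinate linearly in the degree $m$. The $(m,x)$-projection of $\Sigma_{v,F,B}$ lands in $\Sigma_v(L)$, which is linearly bounded by Proposition~\ref{pro:body}, so the $v$-coordinates $x$ sit in a bounded multiple of $m$. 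The $t$-coordinate satisfies $Bm\le t$ by definition, and $t\le (e_{\max}(F)+o(1))m$ since $F$ is linearly bounded; combined these pin $\Sigma_{v,F,B}$ in a positive orthant.

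\textbf{Asymptotic convexity.} Let $\sigma\in\langle\Sigma_{v,F,B}\rangle_\Z$ lie in the interior of $\overline{\cone(\Sigma_{v,F,B})}$. The idea is to reduce to Khovanskii's Theorem~\ref{thm:approx_semigroup}, which does not apply directly because $\Gamma_F$ may fail to be finitely generated. I would first choose finitely many elements $\tau_1,\dots,\tau_l\in\Sigma_{v,F,B}$ such that $\sigma$ is a strictly positive rational combination of them, their $\R$-span agrees with $\langle\Sigma_{v,F,B}\rangle_\R$, and $\sigma$ lies in the interior of their conic hull $C=\cone(\tau_1,\dots,\tau_l)$; possibly enlarging the list I may further assume $\sigma\in\langle\tau_1,\dots,\tau_l\rangle_\Z$.

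Next let $\Gamma_0\subset\Gamma_F$ be the finitely generated subgroup spanned by the $t$-coordinates of $\sigma$ and the $\tau_j$, and set $\Sigma_0\deq \Sigma_{v,F,B}\cap(\Z^{n+1}\times\Gamma_0)$. Choosing a $\Z$-basis of $\Gamma_0$ lifts $\Sigma_0$ to a subsemigroup of $\Z^{n+1+\rrk\Gamma_0}$, whose generated group is therefore a lattice in its real span, and transports $\sigma$ and each $\tau_j$ to corresponding lifts. By construction the lift of $\sigma$ belongs to $\langle\Sigma_0\rangle_\Z$ and sits in the interior of the lift of the strongly convex cone $C\subseteq\overline{\cone(\Sigma_0)}$ (strong convexity follows from the grading by first component, since all $\tau_j$ have positive degree). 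Khovanskii's Theorem~\ref{thm:approx_semigroup}, applied to $\Sigma_0$ inside its own ambient lattice space and to the cone $C$, then yields a positive integer $k$ with $k\sigma\in\Sigma_0\subseteq\Sigma_{v,F,B}$, establishing asymptotic convexity.

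\textbf{Main obstacle.} The principal subtlety is precisely the one just circumvented: $\Gamma_F$ can have arbitrary rational rank and need not be finitely generated, so one cannot invoke Khovanskii directly on $\Sigma_{v,F,B}$. The bookkeeping needed to verify that the lift of $\sigma$ really does sit in the topological interior of $\overline{\cone(\Sigma_0)}$ in the higher-dimensional lattice space, and that the degree condition of Theorem~\ref{thm:approx_semigroup} is satisfied along the ray $\N\sigma$, is the main technical point that one has to verify with care.
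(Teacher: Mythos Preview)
Your treatment of linear boundedness is essentially the paper's argument: project $(m,x,t)\mapsto(m,x)$ into $\Sigma_v(L)$ and use Proposition~\ref{pro:body}, then bound $t$ between $Bm$ and $e_{\max}(F)\,m$.

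For asymptotic convexity the approaches diverge. The paper never lifts $\Gamma_F$ to a higher-dimensional lattice. Instead, given $\sigma=(d,x,t)$ in the interior of the closed cone, it \emph{projects out the $t$-coordinate}: one first finds finitely many $(m_i,y_i,s_i)\in\Sigma_{v,F,B}$ with $ds_i\ge m_it$ whose $(m_i,y_i)$-parts generate $\Z^{n+1}$ and contain $(d,x)$ in the interior of their cone. The key trick is then to use that $F$ is multiplicative and decreasing to replace each $s_i$ by the \emph{common} value $m_it/d$: from $dy_i\in v(F_{ds_i}(dm_i))\subseteq v(F_{m_it}(dm_i))$ one gets $(dm_i,dy_i,m_it)\in\Sigma_{v,F,B}$. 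Now all the auxiliary points have $t$-coordinate proportional to the fixed $t$, so Khovanskii's Theorem~\ref{thm:approx_semigroup} applied in $\Z^{n+1}$ (to the projections) produces a relation $b(d,x)=\sum a_i(dm_i,dy_i)$ with nonnegative integers $a_i$, and the $t$-coordinates automatically match: $\sum a_i m_i t = bt$. Hence $b\sigma\in\Sigma_{v,F,B}$.

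Your route --- restrict to a finitely generated $\Gamma_0\subset\Gamma_F$ and lift to $\Z^{n+1+\rrk\Gamma_0}$ --- runs into exactly the obstacle you flag, and it is genuine rather than cosmetic. When $\rrk\Gamma_0>1$, the lifts of $\tau_1,\dots,\tau_l$ need not span $\R^{n+1+\rrk\Gamma_0}$ (you chose only enough $\tau_j$ to span $\R^{n+2}$), and even if one enlarges the list, showing that $\tilde\sigma$ sits in the \emph{interior} of $\overline{\cone(\tilde\Sigma_0)}$ in the higher-dimensional space requires a separate argument. One would essentially have to prove that $\overline{\cone(\tilde\Sigma_0)}$ equals the full preimage $p^{-1}(\overline{\cone(\Sigma_0)})$ under the projection $p\colon\R^{n+1+\rrk\Gamma_0}\to\R^{n+2}$; this is plausible (and uses that the filtration is decreasing, so the $t$-fibres are intervals), but it is a nontrivial step that your sketch does not supply. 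The paper's projection-plus-multiplicativity manoeuvre sidesteps the whole issue by never leaving $\Z^{n+1}$.
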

\begin{proof}
Let $(d,x,t)\in \Sigma_{v, F,B}$.
Since ${F}_t(d)\ne 0$, it follows that $B\le t/d\le e_{\max}({F})$.
On the other hand, $x\in v({F}_t(d))\subset v(R_d)$,
so $(d,x)\in \Sigma_v(L)$ which by Proposition \ref{pro:body} is linearly bounded.
So $\Sigma_{v, F,B}$, graded by first component, is linearly bounded.

To see that it is asymptotically convex, let $(d,x,t)\in\langle\Sigma_{v, F,B}\rangle_\Z$ belong to the interior of $\overline{\cone(\Sigma_{v, F,B})}$.
Then $(d,x)$ belongs to the interior, in $\R^{n +1}$, of the image by the projection $(m,y,s)\mapsto(m,y)$ of 
\[\overline{\cone(\Sigma_{v, F,B})} \cap \{(m,y,s) | ds\ge mt\}.\]
Thus there exist $(m_1,y_1,s_1),\dots,(m_r,y_r,s_r)\in \Sigma_{v, F,B}$ with $ds_i\ge m_it$ for each $i$, such that $(d,x)$ belongs to the interior of $\cone((m_1,y_1),\dots,(m_r,y_r))$ and $\langle(m_1,y_1),\dots,(m_r,y_r)\rangle_\Z=\Z^{n+1}$.
Since ${F}$ is a multiplicative filtration, we have 
\[dy_i\in v({F}_{ds_i}(dm_i))\subset v({F}_{m_i t}(dm_i)),\]
so $(dm_i,dy_i,m_i t)\in\Sigma_{v, F,B}$.
Now applying Khovanskii's Theorem \ref{thm:approx_semigroup} to the semigroup 
\[
\Sigma'\deq\langle(dm_1,dy_1),\dots,(dm_r,dy_r)\rangle_\Z\subset\Z^{n +1},
\]
since $(d,x)$ belongs to the interior of $\overline{\cone(\Sigma')}$, it follows that there exist nonnegative integers $a_1,\dots,a_r,$ and $b$ such that 
\[
b(d,x)=a_1(dm_1,dy_1)+\dots+a_r(dm_r,dy_r),
\]
and therefore $a_1m_1t+\dots a_rm_rt=bt$. So 
\[
b(d,x,t)=a_1(dm_1,dy_1,m_1t)+\dots+a_r(dm_r,dy_km_rt),
\]
i.e., $b(d,x,t)\in \Sigma_{v, F,B}$, and this semigroup is asymptotically convex.

\end{proof}

\begin{corollary}
	 $\Delta(\Sigma_{v,{F},B})\cap (\R^{n}\times \{t\})=\Delta_v(V_t(\bullet))$ for every $t\in \langle\Gamma_{{F}}\rangle_\Q$, and $\Delta(\Sigma_{v, F,B})=\widehat \Delta(L,{F})_B$.
\end{corollary}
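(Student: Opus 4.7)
The plan is to deduce both equalities from Theorem~\ref{thm:restricted_body_semigrp}, which identifies slices of a Newton--Okounkov body in a rational direction with Newton--Okounkov bodies of the corresponding restricted semigroups. The semigroup $\Sigma_{v,F,B}$ lives in $\R\times\R^n\times\R$, graded by the first component, and by Proposition~\ref{pro:bc-semigroup} it is asymptotically convex, linearly graded, and linearly bounded. Consider the subspace $W\deq\{0\}\times\R^n\times\{0\}\subset L_0$. For every fixed $(m, x, t_0)\in\Sigma_{v,F,B}$ such that $F_{t_0}(m)$ has positive dimension (which produces many elements by varying $x\in v(F_{t_0}(m))$), differences of such elements generate lattice vectors in $W$, so $W$ is $\Sigma_{v,F,B}$-rational.

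Fix a rational $t\in\langle\Gamma_F\rangle_\Q$ and suppose $(1,t)$ is in the relative interior of $\Delta(\Sigma_{v,F,B}/W)\subset\overline L_1$. Pick $\sigma=(m,x,mt)\in\Sigma_{v,F,B}$ with $\pi(\sigma)=(1,t)$; such $\sigma$ exists because the semigroup is asymptotically convex and $W$ is rational. Then
\[
\Sigma_{v,F,B}\big|_{W+\sigma}\equ\Sigma_{v,F,B}\cap\langle W+\sigma\rangle_\R\equ\bigl\{(m',y,m't)\,\big|\,m'\in\Z,\ y\in v(F_{m't}(m'))\bigr\}\ ,
\]
and since $F_{m't}(m')=V_t(m')$, the third coordinate is determined by the first, so this restricted semigroup is canonically identified with $\Sigma_v(V_t(\bullet))$. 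Applying Theorem~\ref{thm:restricted_body_semigrp}(2) gives
\[
\Delta(\Sigma_{v,F,B})\cap L_{(1,t)}\equ\Delta\bigl(\Sigma_{v,F,B}\big|_{W+\sigma}\bigr)\equ\Delta_v(V_t(\bullet))\ ,
\]
which, under the identification $L_{(1,t)}\cong\R^n\times\{t\}$, is the first equality. For non-interior rational $t$ with $t=e_{\min}(F)$ or $t=e_{\max}(F)$ the slice is either a limit point or empty, and in both cases one checks by hand that it still coincides with $\Delta_v(V_t(\bullet))$ by linear boundedness.

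For the second statement, by Definition~3.2.3 of the concave transform (and using Remark~\ref{rmk:concave-transforms} to match it with the Boucksom--Chen point of view) we have
\[
\widehat\Delta(L,F)_B\equ\bigl\{(\alpha,t)\in\Delta_v(L)\times\R\,\big|\,B\dleq t,\ \alpha\in\Delta_v(V_t(\bullet))\bigr\}\ ,
\]
exactly as in \eqref{eq:slices-BC}. The defining inequality $t\ge Bm$ of $\Sigma_{v,F,B}$ descends at degree $1$ to the lower bound $t\ge B$, and the horizontal slices of $\Delta(\Sigma_{v,F,B})$ coincide with $\Delta_v(V_t(\bullet))$ by the first part. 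Since a convex body in $\R^{n+1}$ equipped with a bounded linear functional is reconstructed from its horizontal slices, the two sets agree.

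I expect the main obstacle to be the careful bookkeeping at the boundary of $\Delta(\Sigma_{v,F,B}/W)$, where $t$ equals $e_{\min}(F)$ or $e_{\max}(F)$, and the argument needs to be supplemented by an approximation from the interior; a secondary subtlety is making sure $W$ is genuinely $\Sigma_{v,F,B}$-rational, which ultimately rests on the bigness of $L$ ensuring that $\Sigma_v(L)$ has maximal rank $n+1$ and hence generates enough of $\Z^n$ in the middle coordinates.
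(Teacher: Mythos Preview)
Your proof is correct and follows essentially the same route as the paper's: both apply Theorem~\ref{thm:restricted_body_semigrp} with $W=\{0\}\times\R^{n}\times\{0\}$ to identify the horizontal slices of $\Delta(\Sigma_{v,F,B})$ with the Newton--Okounkov bodies $\Delta_v(V_t(\bullet))$, and then read off the equality with $\widehat\Delta(L,F)_B$ from the slice description \eqref{eq:slices-BC}. Your version is more detailed (explicitly verifying that $W$ is $\Sigma_{v,F,B}$-rational and computing the restricted semigroup), whereas the paper's proof is a two-line sketch; your brief invocation of Remark~\ref{rmk:concave-transforms} mirrors the paper's own forward reference and is not a genuine circularity given the remark's independent justification from \cite{BC}.
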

\begin{proof}
	The equality of the slices of $\Delta(\Sigma_{v, F,B})$ with the Newton--Okounkov bodies of the graded series $V_t(\bullet)$ follows from Theorem \ref{thm:restricted_body_semigrp} applied to $W=\{0\}\times\R^{n}\times\{0\}$, and from this equality it follows that $\Sigma_{v, F,0}=\widehat \Delta(L,{F})$.
\end{proof}

\begin{remark}
	The equality $\Delta(\Sigma_{v, F,0})=\widehat \Delta(L,{F})$ can also be derived observing that the $\Gamma_{{F}}$-rational restricted semigroups of $\Sigma_{v, F,B}$ in the direction $W=\{0\}\times\R^{n}\times\{0\}$ have Newton--Okounkov bodies equal to the slices \eqref{eq:slices-BC}, since we already know from \cite{BC} that these slices form a convex body. 
	However, since our proof does not depend on \cite{BC}, it allows to ``reverse'' the construction of the filtered Newton--Okounkov body: starting from $\Delta(\Sigma_{v,{F},B})$, we have shown that its slices are the Newton--Okounkov bodies of the $V_t(\bullet)$.
	
	Other consequences of Proposition \ref{pro:bc-semigroup} are the equivalence of Remark \ref{rmk:concave-transforms} and the volume formula \cite[Corollary 1.13]{BC}:
\end{remark}

\begin{corollary}
	The two definitions of concave transform of a filtration $F$ as above coincide, i.e., $\phi_{ F}(\alpha)=\phi_{\Finv}(\alpha)$ for all $\alpha\in \Delta_{v}(R)$.
\end{corollary}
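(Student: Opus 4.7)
The plan is to show that both definitions of the concave transform coincide with the upper envelope of the convex body $\widehat{\Delta}(L,F)_B = \Delta(\Sigma_{v,F,B})$, which by the preceding corollary realizes the Boucksom--Chen filtered Newton--Okounkov body as a genuine Newton--Okounkov body of the asymptotically convex semigroup $\Sigma_{v,F,B}$. Write temporarily $\phi_F^{BC}$ for Definition II and $\phi_F^{WN}$ for Definition I, and set
\[\psi(\alpha) \deq \sup\{t\in\R : (\alpha,t)\in \widehat\Delta(L,F)_B\}.\]
Since $\widehat\Delta(L,F)_B$ is a compact convex subset of $\Delta_v(L)\times\R$, the function $\psi$ is concave and upper-semicontinuous on $\Delta_v(L)$.

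First I would identify $\phi_F^{BC}=\psi$ directly: by definition $\phi_F^{BC}(\alpha)=\sup\{t: \alpha\in\Delta_v(V_t(\bullet))\}$, and the preceding corollary tells us that $\Delta_v(V_t(\bullet))$ is exactly the horizontal slice $\Delta(\Sigma_{v,F,B})\cap(\R^n\times\{t\})$. Then for any valuative point $\alpha=y/d_0\in\Delta_v(L)$ with $y\in v(R_{d_0})$, I would show $\phi_F^{WN}(\alpha)=\psi(\alpha)$ as well. The quantities $M_d(\alpha)=\sup\{t:\exists s\in F_t(d), v(s)=d\alpha\}$ are finite for $d\in d_0\N$, and the triples $(d,d\alpha,M_d(\alpha))$ are precisely the maximum-height points of $\Sigma_{v,F,B}$ in the fiber over $(d,d\alpha)$. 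Hence the Fekete limit $\widetilde{\phi_F}(\alpha)=\lim_d M_d(\alpha)/d$ equals the supremum of $t$ such that $(\alpha,t)$ is accumulated by rescaled semigroup points; by the asymptotic convexity of $\Sigma_{v,F,B}$ established in Proposition~\ref{pro:bc-semigroup}, this in turn equals the supremum of $t$ with $(\alpha,t)\in\Delta(\Sigma_{v,F,B})=\widehat\Delta(L,F)_B$, i.e., $\psi(\alpha)$.

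Finally I would upgrade the pointwise equality on valuative points to equality of $\phi_F^{WN}=(\widetilde{\phi_F})^c$ with $\phi_F^{BC}=\psi$ everywhere on $\Delta_v(L)$. Since $\psi$ is itself concave and usc and agrees with $\widetilde{\phi_F}$ on the dense set of valuative points, $\psi$ is a candidate in the infimum defining the closed concave envelope, so $(\widetilde{\phi_F})^c\le\psi$. For the reverse inequality, let $g$ be any concave usc function with $g\ge\widetilde{\phi_F}$ on valuative points. Along any line from the interior of $\Delta_v(L)$, concavity of $\psi$ forces $\liminf\psi(\alpha_n)\ge\psi(\alpha_0)$, while upper semicontinuity gives $\limsup\psi(\alpha_n)\le\psi(\alpha_0)$, so $\psi$ is continuous from the interior. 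Choosing a sequence $\alpha_n\to\alpha_0$ of interior valuative points along such a line, $g(\alpha_0)\ge\limsup g(\alpha_n)\ge\limsup\widetilde{\phi_F}(\alpha_n)=\lim\psi(\alpha_n)=\psi(\alpha_0)$, hence $(\widetilde{\phi_F})^c\ge\psi$.

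The main obstacle is the second step: correctly reading Fekete's limit off the semigroup $\Sigma_{v,F,B}$, since one must check that the approximation of arbitrary interior points of $\Delta(\Sigma_{v,F,B})$ by elements of the semigroup, guaranteed only up to scaling by Khovanskii's theorem, actually forces equality of $\widetilde{\phi_F}(\alpha)$ with the upper envelope value $\psi(\alpha)$ at \emph{every} valuative $\alpha$ (including those on the boundary of $\Delta_v(L)$, where a priori the sup could be strictly smaller than the semigroup-theoretic sup). This is exactly what asymptotic convexity of $\Sigma_{v,F,B}$ buys us, and is the reason Proposition~\ref{pro:bc-semigroup} is the crucial input.
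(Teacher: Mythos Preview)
Your proposal is correct and follows the same strategy as the paper: both definitions are identified with the upper envelope of the convex body $\Delta(\Sigma_{v,F,B})$. The paper's proof is a terse two-liner --- it says that $\Delta(\Sigma_{v,F,e_{\min}(F)})$ is the subgraph of $\phi_F$ ``by definition'' (i.e.\ via the preceding corollary, horizontal slicing $W=\{0\}\times\R^n\times\{0\}$) and is the subgraph of $\phi_{\Finv}$ ``by Theorem~\ref{thm:restricted_body_semigrp} applied to $W=\{0\}^{n+1}\times\R$'' (vertical slicing) --- and leaves the reader to unpack the second claim. Your Steps~2 and~3 are exactly that unpacking: Theorem~\ref{thm:restricted_body_semigrp} with vertical $W$ identifies the fiber over each interior valuative $\alpha$ with $[B,\widetilde{\phi_F}(\alpha)]$, and the concave-envelope argument then extends the identification to all of $\Delta_v(L)$.

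One small comment on your ``main obstacle'' paragraph: you do \emph{not} need the equality $\widetilde{\phi_F}(\alpha)=\psi(\alpha)$ at boundary valuative points, only the inequality $\widetilde{\phi_F}(\alpha)\le\psi(\alpha)$ (which is immediate, since each $M_d(\alpha)/d$ gives a point of $\Delta(\Sigma_{v,F,B})$ over $\alpha$). Your Step~3 correctly uses equality only at \emph{interior} valuative points, and passes to the boundary via the fact that a concave usc function on a convex body is continuous along segments from the interior. So asymptotic convexity is indeed the key input, but only through Theorem~\ref{thm:restricted_body_semigrp} for interior slices, not for any boundary statement.
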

\begin{proof}
	By definition $\Delta(\Sigma_{v, F,e_{\min}(F)})$ is the subgraph of $\phi_{F}$.
	By Theorem \ref{thm:restricted_body_semigrp} applied to $W=\{0\}^{n+1}\times\R$,
	it is also the subgraph of $\phi_{\Finv}$.
\end{proof}

\begin{corollary}[Boucksom--Chen {\cite[Corollary 1.13]{BC}}]
	$$\vol{}{\widehat{\Delta}(L,F)_0}=\int_{t=0}^{\infty} \vol{}{\Delta_{v}(V_t(\bullet))}dt
	= \lim_{d\to\infty}\frac{\operatorname{mass}_+(F(d))}{d^{n+1}}.$$
\end{corollary}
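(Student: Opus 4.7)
My plan is to establish the two equalities separately: the first by Fubini's theorem, the second via a layer-cake identity combined with dominated convergence.

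For the first equality, the corollary immediately preceding the statement identifies $\widehat\Delta(L,F)_0$ with $\Delta(\Sigma_{v,F,0})$ and shows that its horizontal slice at level $t$ equals $\Delta_v(V_t(\bullet))$ for $t\in\langle\Gamma_F\rangle_\Q$. Since $F$ is linearly bounded the body sits inside $\Delta_v(L)\times[0,e_{\max}(F)]$, and the slice description holds on a dense subset of this bounded interval of $t$-values. Fubini's theorem then immediately delivers
\[
\vol{}{\widehat\Delta(L,F)_0} \equ \int_0^\infty \vol{}{\Delta_v(V_t(\bullet))}\, dt\, .
\]

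For the second equality, the starting point is the layer-cake identity
\[
\operatorname{mass}_+(F(d)) \equ \int_0^\infty \dim F_t(d)\, dt\, ,
\]
which follows because $\dim F_t(d)$, viewed as a function of $t\geq 0$, is a non-increasing step function whose downward jumps occur precisely at the positive jumping numbers $e_j(F(d))$. Substituting $u = t/d$ and using $F_{ud}(d)=V_u(d)$ transforms it into
\[
\frac{\operatorname{mass}_+(F(d))}{d^{n+1}} \equ \int_0^\infty \frac{\dim V_u(d)}{d^n}\, du\, .
\]
I would then pass to the limit $d\to\infty$ under the integral sign. Theorem \ref{thm:Hilbert-function-algebra-semigroup} applied to the graded $k$-subalgebra $V_u(\bullet)\subset R(X,L)$ and the valuation $v$ yields the pointwise limit
\[
\lim_{d\to\infty}\frac{\dim V_u(d)}{d^n} \equ \vol{}{\Delta_v(V_u(\bullet))}
\]
for every $u\in\langle\Gamma_F\rangle_\Q$, hence almost everywhere. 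Combining with the first equality completes the argument.

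The only delicate point is the justification of the interchange of limit and integral. Three ingredients are available: linear boundedness of $F$ confines the integrand to the compact interval $[0,e_{\max}(F)]$; the inclusion $V_u(d)\subseteq R_d$ supplies the uniform bound $\dim V_u(d)/d^n \leq \dim R_d/d^n$; and since $L$ is big the sequence $\dim R_d/d^n$ is bounded as $d\to\infty$. Dominated convergence therefore applies, and this is essentially the only step in the argument requiring genuine care.
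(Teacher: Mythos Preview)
Your argument is correct and follows the same route the paper has in mind. The paper's own proof is a single sentence invoking Corollary~\ref{cor:volume-and-slices}; that corollary is precisely a packaged Fubini/slicing statement, so your direct Fubini argument for the first equality simply unwinds it. For the second equality the paper gives no further detail (the result is attributed to Boucksom--Chen), and your layer-cake identity plus dominated convergence is exactly the standard way to supply it.

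One small gap worth flagging: you assert that the pointwise limit $\dim V_u(d)/d^n\to\vol{}{\Delta_v(V_u(\bullet))}$ holds ``for every $u\in\langle\Gamma_F\rangle_\Q$, hence almost everywhere''. But if, say, $\Gamma_F\cong\Z$, then $\langle\Gamma_F\rangle_\Q=\Q$ has Lebesgue measure zero, so the implication fails as written. The repair is easy: Theorem~\ref{thm:Hilbert-function-algebra-semigroup} (together with Theorem~\ref{thm:nob-semigroup-general}) applies to the graded linear series $V_u(\bullet)$ for \emph{every} $u\in\R$, because its value semigroup lies inside $\Z^{n+1}$ and therefore generates a finitely generated group regardless of whether $u$ is rational. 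Hence the pointwise limit in fact holds for all $u$, and dominated convergence goes through exactly as you describe.
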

\begin{proof}
	Follows from Corollary \ref{cor:volume-and-slices},  applied to $W=\{0\}^{n+1}\times\R$.
\end{proof}

We close this section proving that the filtered Newton--Okounkov body with respect to a filtration of rank 1 is in fact the Newton--Okounkov body of a suitable bounded Rees algebra.

\begin{proposition}
	Let $X$ be an $n$-dimensional projective variety, $v:K(X)^\times \rightarrow\Gamma$ a valuation of maximal rational rank, $L$ a big line bundle on $X$, and ${F}$ a homogeneous, linearly bounded, and complete multiplicative filtration on $R=R(X,L)$ indexed by a subgroup of $\R$. Let $\Rees_B( F)$ the bounded Rees algebra (Definition \ref{def:bounded-rees}) and 
	let $\chi:\R^{n}\times \R \rightarrow \R\times\R^n$ be the isomorphism that exchanges both factors. Then $\chi(\widehat{\Delta}(L,{F})_B)=\Delta_v(\Rees_B({F}))$.
\end{proposition}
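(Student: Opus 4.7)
The approach is to realize both $\widehat{\Delta}(L,F)_B$ and $\Delta_v(\Rees_B(F))$ as Newton--Okounkov sets of semigroups that agree under a single coordinate swap, and then invoke the identification $\Delta(\Sigma_{v,F,B}) = \widehat{\Delta}(L,F)_B$ established just after Proposition~\ref{pro:bc-semigroup}.

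First I would view $\Rees_B(F) \subset K[\Z \times \Gamma_F]$ as a graded subalgebra with the grading $\deg(d,t) = d$ induced from the first factor; this matches the original $\Z$-grading of $R$, makes each homogeneous piece finite-dimensional over $k$, and allows Definition~\ref{def:NOB-algebra} to apply with $\Gamma = \Z \times \Gamma_F$. Since the trivial extension of $v$ sends a monomial $s\,u^{(d,t)}$ with $s\in K^\times$ to $v(s)$, unwinding the definition of $\Sigma_v$ yields
\[\Sigma_v(\Rees_B(F)) \;=\; \bigl\{(d,t,\gamma_v) \in \Z\times\Gamma_F\times\Gamma_v \;\bigm|\; \gamma_v \in v(F_t(d)),\ t \ge Bd\bigr\}.\]
Comparing with
\[\Sigma_{v,F,B} \;=\; \bigl\{(d,\gamma_v,t) \in \Z\times\Gamma_v\times\Gamma_F \;\bigm|\; t \ge Bd,\ \gamma_v \in v(F_t(d))\bigr\}\]
from Proposition~\ref{pro:bc-semigroup}, the two semigroups coincide under the linear isomorphism $\tau$ of $\R\times\R\times\R^n$ swapping the second and third factors; in particular $\tau$ preserves the first (degree) coordinate.

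To conclude, the restriction of $\tau$ to the degree-$1$ affine hyperplane is precisely the swap $\chi\colon \R^n \times \R \to \R\times \R^n$ of the statement. Combining $\tau(\Sigma_{v,F,B}) = \Sigma_v(\Rees_B(F))$ with the equality $\Delta(\Sigma_{v,F,B}) = \widehat{\Delta}(L,F)_B$ (proved immediately after Proposition~\ref{pro:bc-semigroup}) gives $\chi(\widehat{\Delta}(L,F)_B) = \Delta_v(\Rees_B(F))$. No serious obstacle is anticipated: the only item needing care is that the grading chosen on $\Z\times\Gamma_F$ is the correct one, since the alternative grading by the $\Gamma_F$-factor would not yield finite-dimensional graded pieces or match the ambient $\R\times\R^n$ of the statement.
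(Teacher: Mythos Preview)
Your proposal is correct and follows essentially the same approach as the paper: compute $\Sigma_v(\Rees_B(F))$ from Definition~\ref{def:NOB-algebra}, observe it coincides with $\Sigma_{v,F,B}$ under the swap of the valuation and filtration factors, and then invoke the identification $\Delta(\Sigma_{v,F,B})=\widehat{\Delta}(L,F)_B$ established in the corollary after Proposition~\ref{pro:bc-semigroup}. Your write-up is in fact more careful than the paper's terse version (which even contains the typo $\Rees_0$ for $\Rees_B$), particularly in spelling out the grading convention and the restriction of the swap to the degree-one hyperplane.
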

\begin{proof}
The semigroup 
\begin{gather*}
\Sigma_v(\Rees_0(F))=\{(d,t,x)\in(\Z\times\Gamma_{F})\times\Gamma \mid
\exists s\in K(X), su_1^d u_2^t\in\Rees_0(F), v(s)=x \}=\\
\{(d,t,x)\in(\Z\times\Gamma_{F})\times\Gamma \mid
\exists s\in K(X), su_1^d \in F_t(d), v(s)=x \}
\end{gather*}
isomorphic to $\Sigma_{v, F,B}$, where the isomorphism switches the factors $\Z^{n}$, coming from the valuation, and $\Gamma_{{F}}$ coming from the filtration.
\end{proof}

\section{Rationality of Seshadri constants on surfaces via integrals of concave transforms}
Let $X$ be a smooth projective variety, $\ord_Z$ a divisorial valuation of $K(X)$, $v$ a valuation of maximal rational rank on $K=K(X)$, and $L$ a big line bundle on $X$. 
This section is devoted to a concrete realization of integrals of concave transforms as volumes, and an application to the rationality of Seshadri constants. 

\subsection{Subgraphs of concave transforms as Newton--Okounkov bodies of line bundles.}
Given $X$,$L$, and ${\rm ord}_Z$ as above, we explicitly construct a big 
line bundle whose volume equals the integral of $\phi_{\ord_Z}$ over any 
Newton--Okounkov body of $L$. 

\begin{theorem}\label{thm:integrals are volumes}
	With notation as above, there exists a projective variety $\xh$, a valuation of maximal rank $\widehat{v}$ on $K(\xh)$ and a big divisor $\lh$ on $\xh$ such that 
	\[
	\Delta_{\widehat{v}} \equ \text{\rm inverted subgraph of $\phi_{{\rm ord}_Z}\colon \Delta_v(L) \lra \RR_{\geq 0}$} \ .
	\]
	In particular, 
	\[
	\int_{\Delta_{v}(L)} \phi_{\ord_Z }\equ \vol{\xh}{\lh}\ .
	\]
\end{theorem}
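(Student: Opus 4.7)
The plan is to realize the subgraph of $\phi_{\ord_Z}$ as the Newton--Okounkov body of a single big line bundle on a $\PP^1$-bundle over a birational model of $X$. The key preparation is the last Proposition of Section~\ref{sec:concave-transforms}, which identifies $\widehat\Delta(L,{F})_0$ (where ${F}$ is the $\ord_Z$-filtration on $R(X,L)$) with the coordinate swap $\chi^{-1}\bigl(\Delta_v(\Rees_0({F}))\bigr)$. It therefore suffices to exhibit a projective variety $\xh$ of dimension $n+1$ together with a big line bundle $\lh$ for which $R(\xh,\lh)\cong\Rees_0({F})$ as graded algebras, and a valuation $\widehat v$ of maximal rational rank on $K(\xh)$ whose valuative semigroup attached to $\lh$ coincides with $\Sigma_{v,{F},0}$.

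First I would replace $X$ by a birational model $X'$ on which $\ord_Z=\ord_{\tilde Z}$ for a smooth prime divisor $\tilde Z$, with the centre $\cent_{X'}(v)$ a closed point disjoint from $\tilde Z$; this is harmless because both $\Delta_v(L)$ and $\phi_{\ord_Z}$ are birationally invariant (cf.\ the Lemma following Theorem~\ref{thm:local positivity invariants}). Then set
\[
E\deq\OO_{X'}\oplus\OO_{X'}(-\tilde Z),\qquad \xh\deq\PP_{X'}(E),\qquad \lh\deq\OO_{\xh}(1)\otimes\pi^*L,
\]
where $\pi\colon\xh\to X'$ is the structure map. The decomposition $\Sym^d E=\bigoplus_{j=0}^{d}\OO_{X'}(-j\tilde Z)$ and the projection formula give
\[
H^0(\xh,d\lh)\equ\bigoplus_{j=0}^{d}H^0\bigl(X',\,dL-j\tilde Z\bigr)\equ\bigoplus_{j=0}^{d}{F}_j(d),
\]
which assembles into an isomorphism $R(\xh,\lh)\cong\Rees_0({F})$ of graded $k$-algebras; since the $j=0$ summand alone grows like $\vol{X'}{L}\,d^n/n!$, the line bundle $\lh$ is big.

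For the valuation, let $\sigma_2\subset\xh$ be the section corresponding to the quotient $E\twoheadrightarrow\OO_{X'}$, so that $\sigma_2\cong X'$ and $\sigma_2^*\OO_{\xh}(1)\simeq\OO_{X'}$. Define $\widehat v$ to be the composite valuation of Example~\ref{exa:composite} whose first component is $\ord_{\sigma_2}$ and whose remaining $n$ components are those of $v$, transferred via $\sigma_2\cong X'$; this yields a valuation of rational rank $1+n=\dim\xh$. A local computation with the tautological frames $e_0,e_1$ of $\OO_{\xh}(1)$ coming from the two summands of $E$ shows that for $s=\sum_{j=0}^{d}s_j\,e_0^{d-j}e_1^{j}$ with $s_j\in{F}_j(d)$, one has $\ord_{\sigma_2}(s)=\min\{j\mid s_j\neq 0\}$, call it $j_0$, and after dividing by $e_1^{j_0}$ and restricting to $\sigma_2$ the leading term is precisely $s_{j_0}\in H^0(X',dL-j_0\tilde Z)$; hence $\widehat v(s)=(j_0,v(s_{j_0}))$. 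Consequently $\Sigma_{\widehat v}(\lh)=\Sigma_{v,{F},0}$ modulo $\chi$, and taking closures yields $\Delta_{\widehat v}(\lh)=\chi(\widehat\Delta(L,{F})_0)$, the inverted subgraph of $\phi_{\ord_Z}$. The volume identity then follows from the Kaveh--Khovanskii volume formula applied to $\lh$ (with the usual normalization absorbing the factor $(n+1)!$).

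The main obstacle is the valuation-theoretic verification in the last step: one must check that the composite $\widehat v$ defines a genuine valuation on $K(\xh)$ and that the local leading-term computation is intrinsic, i.e., independent of the choice of the frames $e_0,e_1$, which are only determined up to units over trivializing open sets of $\OO_{X'}(-\tilde Z)$. The hypothesis $\cent_{X'}(v)\not\subset\tilde Z$ is essential here, since otherwise the rational section $s/e_1^{j_0}$ need not restrict regularly to $\sigma_2$ near the centre of $v$, and the value $v(s_{j_0})$ would not correctly match the grading of $\Sigma_{v,{F},0}$.
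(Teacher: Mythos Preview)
Your overall strategy---realize the bounded Rees algebra $\Rees_0({F})$ as the section ring of a line bundle on a $\PP^1$-bundle, then invoke the last Proposition of Section~\ref{sec:concave-transforms}---is sound, and indeed it is essentially what the paper does (though the paper carries out the comparison of the two bodies directly via the sets $S_1(d)$, $S_2(d)$ rather than appealing to that Proposition). The valuation $\widehat v=(\ord_{\sigma_2},v)$ you describe is the same composite valuation the paper uses, and your leading-term computation is correct.

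However, there is a genuine gap in your choice of $\lh$. With $E=\OO_{X'}\oplus\OO_{X'}(-\tilde Z)$ and $\lh=\OO_{\xh}(1)\otimes\pi^*L$ you obtain
\[
H^0(\xh,d\lh)\;=\;\bigoplus_{j=0}^{d}H^0(X',dL-j\tilde Z)\;=\;\bigoplus_{j=0}^{d}{F}_j(d),
\]
whereas the degree-$d$ piece of $\Rees_0({F})$ is $\bigoplus_{j\ge 0}{F}_j(d)$. These coincide only when ${F}_j(d)=0$ for all $j>d$, i.e.\ only when $e_{\max}({F})=\mu(L;\tilde Z)\le 1$, which is certainly not automatic. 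In general your construction yields only the truncation $\widehat\Delta(L,{F})_0\cap\{\alpha\le 1\}$ of the inverted subgraph, and the claimed isomorphism $R(\xh,\lh)\cong\Rees_0({F})$ is false.

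The fix is exactly the twist the paper introduces: take $\lh=\OO_{\xh}(b)\otimes\pi^*L$ for an integer $b>\mu(L;\tilde Z)$ (equivalently, in the paper's notation, $\lh=\pi^*L+bX_1$). Then $H^0(\xh,d\lh)=\bigoplus_{j=0}^{db}{F}_j(d)=\bigoplus_{j\ge 0}{F}_j(d)$, since ${F}_j(d)=0$ once $j>d\,\mu(L;\tilde Z)$. Your valuation computation goes through unchanged (the section $\sigma_2$ and $\widehat v$ do not depend on $b$), and the rest of your argument then gives the full inverted subgraph.
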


Here by inverted subgraph of a function $f$ defined on $A$ we mean the set of 
all points
\[ \left\{ (\alpha, x)| 0\leq \alpha\leq f(x)\, ,x\in A\right\}.\]
Note that the integral of $\phi_{\ord_Z}$ is independent of the choice of the valuation $v$. 

\begin{lemma}\label{lem:birational}
	Let $X$ be a normal projective variety, $L$ a big divisor on $L$, $ord_Z$ a divisorial valuation of the function field $K(X)$, and $\pi\colon X'\to X$ a proper
	birational morphism. Then
	
	\[
	\int_{\Delta_{v_1}} \phi_{\ord_Z} \equ \int_{\Delta_{v_2}}(\pi^*L) \phi_{\ord_Z}\ ,
	\]
	where $v_1$ and $v_2$ are arbitrary valuations of maximal rational rank on $K(X)=K(X')$.
\end{lemma}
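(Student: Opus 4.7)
The plan is to combine two ingredients that are already in place in the paper: the previously stated equality $\iota(\pi^*L;\ord_Z)=\iota(L;\ord_Z)$, together with the observation that the integral defining $\iota(L;\ord_Z)$ is genuinely independent of the maximal-rank valuation used to build $\Delta_v(L)$. I will make both ingredients fully explicit and then chain the resulting equalities.

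First I would fix a single valuation $v$ of maximal rational rank on $K\deq K(X)=K(X')$, exploiting that the two function fields are canonically identified by $\pi^*$ since $\pi$ is birational. Normality of $X$ together with the projection formula gives $\pi_*\mathcal{O}_{X'}=\mathcal{O}_X$, hence a canonical isomorphism of graded algebras
\[
\pi^*\colon R(X,L)\;\xrightarrow{\sim}\; R(X',\pi^*L)
\]
whose degree-$d$ piece is the identity on a single subspace of $K$. Consequently $\Sigma_v(L)=\Sigma_v(\pi^*L)$ as graded subsemigroups, and therefore $\Delta_v(L)=\Delta_v(\pi^*L)$. Because $\ord_Z$ is intrinsic to $K$, the two filtrations it induces on $R(X,L)$ and on $R(X',\pi^*L)$ agree term by term, so the concave transforms $\phi_{\ord_Z}$ coincide on the common Newton--Okounkov body. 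This already yields the equality of integrals \emph{for this specific common choice of valuation}.

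The second step is to remove the special choice of $v$ on each side by invoking the Boucksom--Chen integral formula established in the Corollary immediately preceding this lemma. Applied to the nonnegative filtration induced by $\ord_Z$ (so that $\mass=\mass_+$), it reads
\[
\int_{\Delta_{v_1}(L)}\phi_{\ord_Z}\;=\;\lim_{d\to\infty}\frac{\mass(\ord_Z(d))}{d^{n+1}},
\]
whose right-hand side depends only on the filtration of $R(X,L)$ induced by $\ord_Z$, not on $v_1$. The same holds on $X'$, so chaining
\[
\int_{\Delta_{v_1}(L)}\phi_{\ord_Z}=\int_{\Delta_v(L)}\phi_{\ord_Z}=\int_{\Delta_v(\pi^*L)}\phi_{\ord_Z}=\int_{\Delta_{v_2}(\pi^*L)}\phi_{\ord_Z}
\]
gives the claim.

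The only (mild) point requiring care is the identification of the two section rings in step one: it rests on $\pi_*\mathcal{O}_{X'}=\mathcal{O}_X$, which is precisely where the normality hypothesis on $X$ enters; everything else is formal bookkeeping around the definitions and results already set up earlier in the paper.
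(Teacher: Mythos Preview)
Your proof is correct and follows essentially the same two-step strategy as the paper: first argue that the integral is independent of the chosen maximal-rank valuation, then verify the equality for one common choice. The paper phrases independence via the Hilbert function of the bounded Rees algebra (Theorem~\ref{thm:Hilbert-function-algebra-semigroup}) rather than the mass formula, and for the second step picks an explicit flag $Y_\bullet$ on $X$ with $Y_n$ outside the exceptional locus together with its proper transform on $X'$, whereas you work directly with an abstract valuation on the common function field; these are cosmetic differences, and your use of $\pi_*\mathcal{O}_{X'}=\mathcal{O}_X$ to identify the section rings is exactly where the paper's normality hypothesis enters as well.
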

\begin{proof}
We know that these two integrals are independent of the choice of valuations $v_i$ because they are volumes of $\Delta_{v_i}(\Rees_0(F_{\ord_Z}))$, and by Theorem \ref{thm:Hilbert-function-algebra-semigroup} these only depend on the Hilbert function of $\Rees_0(F_{\ord_Z})$.
It will therefore be enough to find one example of flag valuations $v_{\ybul}$ and $v_{\ybul'}$ for which the two integrals coincide.\\ \\
Let $\ybul$ be a flag on $X$ such that the point $Y_n$ is contained in the open set $U$ over which $\pi$ is an isomorphism and let $\ybul'$ be the proper
transform of $\ybul$ on $X'$. Since $X$ is normal the pullback map
\[ \pi^*: H^0(dL)\rightarrow H^0(d\pi^*(L))\] 
is an isomorphism for every $d$. Since the valuations $v_{\ybul}$ and $v_{\ybul'}$ can be calculated over the isomorphic open sets $U$ and $\pi^{-1}(U)$ we have that 
for any $\sigma \in H^0(dL)$ \[v_{\ybul'}(\pi^*(\sigma))= v_{\ybul}(\sigma).\]
It follows that for this choice of $\ybul$ and $\ybul'$  we have that 
$\Delta_{\ybul}(L)=\Delta_{\ybul'}(\pi^*(L))$ and
$\phi_{\ord_Z}= \phi_{\ord_Z}$. This completes the proof of Lemma \ref{lem:birational}
\end{proof}

By Lemma~\ref{lem:birational} we may assume after possibly blowing up $X$ that $\ord_Z= \ord_D$, with $D$ a smooth effective Cartier divisor on $X$.

We set 
\[
\xh \deq \PP_X(\sO_X\oplus\sO_X(D))\ .
\]
where here we have used the Grothendieck convention for projective bundles. 
The natural surjections $\sO_X\oplus\sO_X(D)\to \sO_X$ and $\sO_X\oplus\sO_X(D)\to \sO_X(D)$ give rise to embeddings $\iota_1\colon X\hookrightarrow \xh$ and 
$\iota_2\colon X\hookrightarrow \xh$, whose respective images we will denote by $X_1$ and $X_2$. Note that $X_1\cap X_2=\emptyset$. 

In addition we have the natural projection $\pi\colon \xh\to X$ whose restriction to the $X_i$'s is the identity of $X_i\simeq X$.  The construction also gives rise to  the 
linear equivalence $X_2\sim X_1 + \pi^*D$, and we have isomorphisms
\[
\sO_{\xh}(X_1)|_{X_1} \simeq \sO_X(-D)\ \ \text{ and }\ \ \sO_{\xh}(X_2)|_{X_2} \simeq \sO_{X_2}(D)\ . 
\]
We will set $\lh \deq \pi^*L+bX_1$, for some rational number $b$ such that $b> \sup \{s>0\mid  L-sD\mbox{ is big  }\}$. We consider the composite valuation (Example \ref{exa:composite}) obtained from $Y_1=X_2$, $\bar{v}=v$, which we denote $\widehat{v}$.

We denote the subgraph of the function $\phi_{\ord_D}$ on $\Delta_{v}(L)$ by $\dht$, i.e.
\[
\dht \deq \st{(\alpha, (t_1,\dots,t_n))\mid (t_1,\dots,t_n)\in \delta_v(L)\, ,\, 0\leq \alpha\leq \phi_{\ord_D}(t_1,\dots,t_n)} \dsubseteq \RR^n\times\RR\ .
\]

\begin{proof}[Proof of Theorem~\ref{thm:integrals are volumes}]
It will be enough to prove that 
\[
	\dht \equ \Delta_{\widehat v}(\widehat{L})\ ,
\]
By definition we have that 
\begin{eqnarray*}
\dht & = &  \text{topological closure of}\  \bigcup_{d=1}^{\infty} \st{(\alpha,
(\frac{1}{d}\cdot v(s))) \mid s\in \HH{0}{X}{\sOX(dL)}\, ,\, \ord_D(s)\geq \alpha} \dsubseteq \QQ^n\times \QQ\\
& = &   \text{topological closure of}\  \bigcup_{d=1}^{\infty} \st{(\alpha,
\frac{1}{d}\cdot v(s)) \mid s\in \HH{0}{X}{\sOX(dL-\alpha D)}} \dsubseteq \QQ^n\times \QQ  \ .
\end{eqnarray*}
Let us write 
\[
S_1(d) \deq \st{(\frac{1}{d}\cdot v(s),\alpha) \mid s\in \HH{0}{X}{\sOX(d(L-\alpha D))}} \ .
\]
Next, we look at  the convex body $\Delta_{\widehat{v}}(\widehat L)$. By definition
\[
\Delta_{\widehat{v}}(\widehat L) \equ  \text{topological closure of }\ \bigcup_{d=1}^{\infty} \st{\frac{1}{d}\cdot \widehat{v}(\sigh)\mid \sigh\in\HH{0}{\xh}{\sO_{\xh}(d\lh)}} \ .
\]
We set
\[ S_2(d)= \st{\frac{1}{d}\cdot \widehat{v}(\sigh)\mid \sigh\in\HH{0}{\xh}{\sO_{\xh}(d\lh)}}\]
By the construction of the composite valuation $\widehat v$ we have that 
$\widehat{v}(\sigh) \equ (\ord_{X_2}\sigh, v(\sigh_1))$, where 
\[\sigh_1 \deq \frac{\sigh}{f^{\ord_{X_2}\sigh}}\big|_{X_2},\]  
the function $f$ begin a local equation of $X_2$ in $\xh$ in a neighbourhood of
$\yh_{n+1}$. It follows that
\[S_2(d)=
\st{\frac{1}{d}\cdot \widehat{v}(\sigh)\mid \sigh\in\HH{0}{\xh}{\sO_{\xh}(d\lh)}} \equ  
\st{ (\alpha,\frac{1}{d}\cdot v(\sigh_1)) \mid \sigh\in \HH{0}{\xh}{\sO_{\xh}(d\lh)}\, ,\, \ord_{X_2}\sigh = d\alpha}\ .
\]
We have a  natural injection $j_2\colon \HH{0}{\xh}{\sO_{\xh}(d(\lh-\alpha X_2))} \hookrightarrow \HH{0}{\xh}{\sO_{\xh}(d\lh)}$ and for any global section
$\sigh\in j_2\colon \HH{0}{\xh}{\sO_{\xh}(d(\lh-\alpha X_2))}$ we have that
 $\sigh_1=j_2^{-1}(\sigh)|_{X_2}$. It follows that 
\[ 
S_2(d) \deq  \left\{ (\alpha, \frac{1}{d}\cdot v(\sigh|_{X_2})) \mid \sigh\in \HH{0}{\xh}{d(\lh-\alpha X_2)} \right\} \ .
\]

We will now prove that $S_1(d)=S_2(d)$ are the same set, which completes the
proof of the theorem.
\begin{eqnarray*}
(d(\lh-\alpha X_2))|_{X_2} & \equ &  \iota_2^*(d(\lh-\alpha X_2)) \equ \iota^*_{2} (d(\pi^*L+bX_1 -\alpha X_2))  \\
&  \equ  &   (\iota_2^*\pi^*)(dL)  + \iota_2^*(dbX_1) - \iota_2^*(d\alpha X_2)    \\
& = & d(L-\alpha D)\ , 
\end{eqnarray*}
since $\iota_2\circ\pi=\id_X$, $\iota_2^*\sO_{\xh}(X_1)=\sO_{X}$, and $\iota_2^*\sO_{\xh}(X_2)=\sO_{X_2}(D)$ by construction.  
We will therefore have that $S_1(d)=S_2(d)$ for all $d\geq 1$ if the 
restriction map 
\[ 
H^0(\xh,\sO_{\xh}(d(\lh-\alpha X_2)))\stackrel{\res_{X_2}}{\lra }  H^0(X_2,\sO_{X_2}(d(L-\alpha D)))
\]
is surjective 
for any rational $a$ and integral $d$ such that $d\alpha D$ is an 
integral divisor.  
	
This is immediate for any $\alpha$ for which $L-\alpha D$ is not effective, so we may assume that $\alpha \leq \mu(L;D)$.
Observe that  
\[ 
d(\lh-\alpha X_2) \equ  d(\pi^*L+ bX_1-\alpha X_2) \equ  d(\pi^*(L-\alpha D)+ (b-\alpha) X_1)\ ,
\]
and since we have chosen $b> \mu(L;D)$,  it follows that $(b-\alpha)X_1$ is effective. Since  $X_1\cap X_2=\varnothing$ and 
$H^0(\pi^*(L-\alpha D)) = \pi^*(H^0(L-\alpha D))$ we deduce from the commutative diagram 
\[
\xymatrix{
	H^0(\xh,\OO_{\xh}(d(\lh-\alpha X_2))) \ar[rr]^{\res_{X_2}} & & H^0(X_2,\OO_{X_2}(d(L-\alpha D))) \\ 
	H^0(X,\OO_X(d(L-\alpha D))) \ar[u]_{\pi^*} \ar[urr]^{\sim} & 
	}
\]
that 
\[ 
H^0(d(\lh-\alpha X_2))\twoheadrightarrow  H^0(d(L-\alpha D))\ ,
\]
so $S_1(d)$ and $S_2(d)$ are equal. This completes the proof of the theorem.
\end{proof}

\subsection{Link with Seshadri constants.}
We start by defining Seshadri constants.  
Let $X$ be a smooth projective variety, $L$ an ample line bundle, $x\in X$ an
arbitrary point. We denote the blow-up of $x\in X$ with exceptional
divisor $E$  by $\pi:\tilde{X}\to X$.
\\ \\
The Seshadri constant of $L$ at $x$ is defined as
\[
 \epsilon(L;x) \deq \sup\st{t>0\,|\, \pi^*L-tE \text{ is nef }}\ .
\]
We also consider the invariant
\[
 \mu(L;x) \deq \sup\st{t>0\,|\, \pi^*L-tE \text{ is pseudo-effective}} \equ \sup\st{t>0\,|\, \pi^*L-tE \text{ is big}} \ .
\]
 Note that if $\ybul$ is a flag on $\tilde{X}$ whose first member is $E$
and $\Delta$ is the Newton--Okounkov body of $L$ with respect to this flag then
the projection of $\Delta$ onto its first coordinate is an interval of the form
\[[\beta(L,p), \mu(L,p)].\]
This invariant is sometimes denoted by $\mu_E(\pi^*L)$ or $\mu(\pi^*L,E)$. 
\\ \\
The 
basic link between rationality of Seshadri constants on surfaces 
and the invariant $\mu$
is the following, taken 
from  \cite[Remark 2.3]{KLM}.
\begin{remark}\label{rmk:KLM}
Let $X$ be a smooth projective surface and let $L$ be a line bundle and $x$ a 
point on $X$. 
If $\epsilon(L;p)$ is irrational, then
\[
 \epsilon(L;x) \equ \mu(L;x)\ .
\]
In particular, if $\mu(L;x)$ is rational, then so is $\epsilon(L;x)$.
\end{remark}
We now link this rationality to that of a third invariant, the integral of the
concave transform. For any variety $X$, any  big line bundle $L$ and any point
$x\in X$ we define 
\[\ilx= \int_{\Delta_{\ybul}(L)} \phi_{v_x}\]
where $v_p$ is the order at  $p$ valuation. 
We have the following proposition. 

\begin{proposition}\label{prop:integral estimate}
Let $X$ be a smooth projective variety of dimension $n$, 
$L$ an ample Cartier divisor on $X$, $x\in X$ arbitrary. Then 
\[
\ilx \dgeq  \frac{\elx^{n+1}}{(n+1)!\, (L^n)} 
\]
with equality if  $\elx=\mlx$. 
\end{proposition}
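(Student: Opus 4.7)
The plan is to apply the integral formula of Proposition~\ref{prop:integral formula} and estimate the integrand on the interval $[0,\elx]$, where nefness of $\pi^*L-tE$ determines the slices of the Newton--Okounkov body exactly.

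First, I would let $\pi\colon \tX\to X$ be the blowup of $x$ with exceptional divisor $E\cong \PP^{n-1}$, and fix an admissible flag $\ybul$ on $\tX$ with $Y_1=E$. Proposition~\ref{prop:integral formula} together with the Lazarsfeld--Musta\c{t}\u{a} slicing theorem rewrites
\[
\ilx\cdot (L^n) \equ \int_0^{\mlx} t\cdot V_t\, dt,
\]
where $V_t$ denotes the Euclidean $(n-1)$-volume of the slice $\Delta_{\ybul|E}(\pi^*L - tE)$, related to the geometric restricted volume by $(n-1)!\,V_t = \vol{\tX|E}{\pi^*L-tE}$. Outside $[0,\mlx]$ the integrand vanishes.

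Next, for $t\in[0,\elx]$ the divisor $\pi^*L - tE$ is nef by the definition of the Seshadri constant, and big for $t<\mlx$. For such a divisor the restricted volume along $E$ equals the top self-intersection of its restriction, and since $\OO_E(E)=\OO_E(-1)$ we have $(\pi^*L - tE)|_E = tH$ with $H$ the hyperplane class on $\PP^{n-1}$. Hence $\vol{\tX|E}{\pi^*L-tE}=t^{n-1}$ and $V_t = t^{n-1}/(n-1)!$ on $[0,\elx]$ (extended to the endpoint by continuity if $\elx=\mlx$).

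Combining these observations with $V_t\ge 0$ and restricting the integration to $[0,\elx]$ produces
\[
\ilx\cdot(L^n) \;\dgeq\; \int_0^\elx \frac{t^n}{(n-1)!}\, dt \;=\; \frac{\elx^{n+1}}{(n+1)!},
\]
which yields the stated inequality. For the equality case $\elx=\mlx$, the upper limit of the integral coincides with $\elx$ and the formula $V_t=t^{n-1}/(n-1)!$ holds on the entire support of the integrand, so the inequality is tight. I expect the only delicate point to be the use of the intersection-theoretic identity for the restricted volume of a nef divisor at the boundary of the nef (or big) cone when $\elx=\mlx$; this is handled by continuity of restricted volumes on the big cone.
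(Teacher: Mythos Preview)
Your approach is essentially the same as the paper's: both pass to the blowup, pick a flag with $Y_1=E$ so that $\phi_{\ord_E}$ becomes the first coordinate, apply Fubini/Proposition~\ref{prop:integral formula} to get an integral of $t$ times the restricted volume, evaluate the restricted volume as a power of $t$ on the nef interval $[0,\elx]$, and drop the remaining nonnegative piece.

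One arithmetic slip to fix: the evaluation
\[
\int_0^{\elx}\frac{t^n}{(n-1)!}\,dt \;=\; \frac{\elx^{n+1}}{(n-1)!\,(n+1)} \;=\; \frac{n\,\elx^{n+1}}{(n+1)!}
\]
does not equal $\dfrac{\elx^{n+1}}{(n+1)!}$ as you wrote; the factor $n$ is missing. (The paper's own proof has a matching normalization inconsistency between its use of $\vol{\tX|E}{\cdot}$ in Proposition~\ref{prop:integral formula} as the Euclidean slice volume and its substitution $\vol{\tX|E}{f^*L-tE}=t^{n-1}$ here, so the stated constant $(n+1)!$ in the proposition appears to be a typo for $(n+1)(n-1)!$.) This does not affect the inequality direction, but it does affect the equality claim, so be explicit about which normalization of restricted volume you are using throughout.
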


\begin{proof}
We consider a flag $\ybul$ on $\tilde{X}$ whose first member is $E$, and denote by $\Delta$ the associated Newton--Okounkov body $\Delta_{\ybul}(L)$. 
By definition, the function $\phi_{v_p}$ on this body is given by
\[ \phi_{v_p}(t_1,\ldots,t_n)= t_1.\]
By Fubini's theorem we therefore have that 
\[ \ilx=\int_\Delta t_1= \int_0^{\mu(L,x)} s {\rm vol}(\Delta\cap(s\times \mathbb{R}^{n-1}) ds\]
and applying \cite{LM09}, Lemma 6.3 which states that
\[ {\rm vol}(\Delta\cap(s\times \mathbb{R}^{n-1})= \vol{\tX|E}{f^*L-tE}\]
we get that
\begin{eqnarray*}
\ilx  & \equ &  \frac{1}{\vol{X}{L}} \int_{0}^{\elx} t\cdot \vol{\tX|E}{f^*L-tE} dt \ +\ \frac{1}{\vol{X}{L}} \int_{\elx}^{\mlx} t\cdot \vol{\tX|E}{f^*L-tE} dt \\ 
& \dgeq & \frac{1}{\vol{X}{L}} \int_{0}^{\elx} t\cdot \vol{\tX|E}{f^*L-tE} dt \ ,
\end{eqnarray*}
with equality if $\elx=\mlx$. We will determine the expression on the right. Since $L$ is ample, $\vol{X}{L} = (L^n)$. By definition of 
$\epsilon(L,x)$  the divisor $f^*L-tE$ is ample if $0<t<\elx$, hence 
\[
\vol{\tX|E}{f^*L-tE} \equ \vol{E}{f^*L-tE|_E} \equ \vol{\PP^{n-1}}{f^*L-tE|_E} \equ t^{n-1}\ .
\]  
Consequently,
\[
\frac{1}{\vol{X}{L}} \int_{0}^{\elx} t\cdot \vol{\tX|E}{f^*L-tE} dt \equ \frac{1}{(L^n)} \int_{0}^{\elx} t^n dt \equ \frac{\elx^{n+1}}{(n+1)!\, (L^n)}\ ,
\]
which is what we wanted. 
\end{proof}

\begin{corollary}\label{cor:irrational Seshadri on surfaces}
Let $X$ be a smooth projective surface, $x\in X$, and $L$ an ample Cartier divisor on $X$. Then $\elx$ is rational if $\ilx$ is. 
\end{corollary}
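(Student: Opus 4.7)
My plan is to prove the contrapositive: assuming $\elx$ is irrational, I will show that $\ilx$ must also be irrational. The first step is to invoke Remark~\ref{rmk:KLM}, which immediately reduces the problem to the case $\elx\equ\mlx$.

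The key geometric observation is that on a \emph{surface}, the equality $\elx\equ\mlx$ forces $\elx^2\equ(L^2)$. Indeed, since $\pi^*L-tE$ is nef for every $t\in[0,\elx]$ and the nef cone of $\tX$ is closed, the $\RR$-divisor $\pi^*L-\elx E$ is nef; meanwhile, the assumption $\elx\equ\mlx$ places $\pi^*L-\elx E$ on the boundary of the big cone, so $\vol{\tX}{\pi^*L-\elx E}\equ 0$. Since the volume of a nef $\RR$-divisor on a surface equals its top self-intersection, one computes
\[
0 \equ (\pi^*L-\elx E)^2 \equ (L^2)-\elx^2,
\]
yielding $\elx^2\equ(L^2)$.

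With this in hand, the rest of the argument is a short calculation using the equality case of Proposition~\ref{prop:integral estimate}: since $\elx\equ\mlx$,
\[
\ilx \equ \frac{\elx^3}{6\,(L^2)} \equ \frac{\elx\cdot\elx^2}{6\,(L^2)} \equ \frac{\elx}{6}.
\]
Because $\elx$ is irrational, $\ilx\equ\elx/6$ is irrational as well, which completes the contrapositive.

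The only step requiring genuine care is the surface-specific linearization of the cubic relation $\ilx\equ\elx^3/(6(L^2))$ to the linear relation $\ilx\equ\elx/6$ via the identity $\elx^2\equ(L^2)$; this is where the hypothesis $\dim X\equ 2$ is essential, as the identification of volume with self-intersection holds only for nef divisors on surfaces. In higher dimensions the strategy breaks down, since $\elx\equ\mlx$ need not imply $\elx^n\equ(L^n)$, and even with that identity in hand the $(n+1)$st-power relation would not, by itself, transfer rationality from $\ilx$ to $\elx$ without additional arithmetic input.
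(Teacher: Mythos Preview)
Your proof is correct and follows exactly the paper's approach: assume $\elx$ irrational, use Remark~\ref{rmk:KLM} to get $\elx=\mlx$, deduce $\elx^2=(L^2)$, and then apply the equality case of Proposition~\ref{prop:integral estimate} to obtain $\ilx=\elx/6$. You have simply spelled out the derivation of $\elx=\sqrt{(L^2)}$ that the paper states without proof.

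One small caveat about your closing paragraph (which is extraneous to the proof): the claim that ``the identification of volume with self-intersection holds only for nef divisors on surfaces'' is inaccurate---$\vol{}{D}=D^n$ for nef $D$ in every dimension, and in fact $(\pi^*L-tE)^n=(L^n)-t^n$ for a point blow-up in any dimension, so $\elx=\mlx$ would still yield $\elx^n=(L^n)$ and hence $\ilx=\elx/(n+1)!$. The genuinely surface-specific ingredient is Remark~\ref{rmk:KLM} itself (irrationality of $\elx$ forces $\elx=\mlx$), which relies on Zariski decomposition and has no straightforward analogue in higher dimensions.
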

\begin{proof}
Suppose  that $\elx$ is irrational. Then necessarily $\elx=\mlx$ and $\elx=\sqrt{(L^2)}$. From Proposition~\ref{prop:integral estimate} it follows that 
$\ilx=\elx/(n+1)!$ so  $\ilx$ is also irrational.
\end{proof} 

We will now apply  Theorem~\ref{thm:integrals are volumes} to calculating 
$\ilx$. Let $X$ be  a smooth projective surface, let $x$ be a point in $X$, 
 and let $L$ be an ample Cartier  on $X$. Let $\eta\colon \tX\to X$ be the blowing-up of $X$ at the point $x\in X$ with exceptional divisor $E$, write 
\[
\pi\colon \xh \deq \PP_{\tX}(\OO_{\tX}\oplus\OO_{\tX}(E))) \lra \tX\ 
\]
for the natural projection, and set $f\deq \pi\circ\eta $. We consider sub varieties $X_1$ and $X_2$ of $X$ as in the proof of 
Theorem~\ref{thm:integrals are volumes}. We then have that 
\[ 
\ilx \equ  \vol{\xh}{\lh}
\]
where $\lh$ is a line bundle on $\xh$ of the form 
\[
\lh \equ f^*L + bX_1 \equ \pi^*(\eta^*L) + b(X_2-\pi^*E)\ ,
\]
for some $b> \mu(L,x)$.
Since $X_2 = \xi$, where $\OO_{\xh}(\xi)=\OO_{\xh}(1)$ we obtain on rearranging 
\[
\lh \equ \pi^*(\eta^*L) + b(\xi-\pi^*E) \equ b\xi + \pi^*(\eta^*L-bE)\ .
\]	
Corollary~\ref{cor:irrational Seshadri on surfaces} then gives us the 
following
\begin{corollary}\label{cor:fg implies Sehsadri rtl}
With notation as above, if $\vol{\xh}{\lh}\in \QQ$ (in particular, if the section ring $R(\xh,\lh)$ is finitely generated), then $\elx$ is a rational number. 
\end{corollary}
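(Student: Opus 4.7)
The plan is to stitch together the ingredients that have been assembled in the preceding paragraphs. The identity $\ilx = \vol{\xh}{\lh}$ is already in hand, having been derived immediately before the corollary by invoking Theorem~\ref{thm:integrals are volumes} for the divisorial valuation $\ord_E$ on $\tX$ (and using Lemma~\ref{lem:birational} to pass from the original valuation $v_x$ on $X$ to the one on the blow-up, without changing the value of the integral). So under the hypothesis $\vol{\xh}{\lh}\in\QQ$, we immediately obtain $\ilx\in\QQ$.

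Next, I would feed this rationality into Corollary~\ref{cor:irrational Seshadri on surfaces}, which states that for a smooth surface $X$, an ample Cartier divisor $L$, and a point $x\in X$, rationality of $\ilx$ forces rationality of $\elx$. This yields the main claim. The argument for Corollary~\ref{cor:irrational Seshadri on surfaces} itself, which the plan above delegates, relies on Remark~\ref{rmk:KLM}: if $\elx$ were irrational then $\elx=\mlx=\sqrt{(L^2)}$, and Proposition~\ref{prop:integral estimate} applied with equality would give $\ilx=\elx/6$ (the surface case $n=2$), also irrational. The chain rational $\vol{\xh}{\lh} \Rightarrow$ rational $\ilx \Rightarrow$ rational $\elx$ is thus essentially automatic.

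For the parenthetical ``in particular'' statement, I would argue that finite generation of $R(\xh,\lh)$ implies $\vol{\xh}{\lh}\in\QQ$. This is a well-known consequence of finite generation: choosing a sufficiently divisible $m$ so that $R(\xh,\lh)^{(m)}$ is generated in degree one, one resolves the base locus of $|m\lh|$ by a birational model $\mu\colon\xh'\to\xh$ on which $\mu^*(m\lh)=M+F$ with $M$ base-point free (hence nef) and $F$ the fixed part. Standard volume computations then give $\vol{\xh}{\lh}=(M^{n+1})/m^{n+1}$ with $M$ an integral nef Cartier divisor, so the volume is rational. I would cite this as a standard fact (e.g. \cite{Laz04I}) rather than reprove it.

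The only step that is not entirely mechanical is verifying the compatibility between Theorem~\ref{thm:integrals are volumes} and the present geometric setup, which is precisely what the paragraph preceding the corollary checks; the rest of the argument is a two-line deduction. Accordingly, I do not expect any genuine obstacle, and the proof of Corollary~\ref{cor:fg implies Sehsadri rtl} will be only a few sentences long, consisting of the explicit invocation of Theorem~\ref{thm:integrals are volumes}, the equality $\ilx=\vol{\xh}{\lh}$, and Corollary~\ref{cor:irrational Seshadri on surfaces}.
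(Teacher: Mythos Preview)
Your proposal is correct and matches the paper's approach exactly: the paper does not even write out a separate proof, simply stating that Corollary~\ref{cor:irrational Seshadri on surfaces} together with the identity $\ilx=\vol{\xh}{\lh}$ established immediately before yields the result. Your unpacking of the parenthetical (finite generation $\Rightarrow$ rational volume) and of the argument behind Corollary~\ref{cor:irrational Seshadri on surfaces} is accurate and matches what the paper does elsewhere.
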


\begin{remark}\label{rmk:nef cone of projective bundle}
The nef cone of  $\xh$ equals the closed convex subcone of  $N^1(\xh)_\RR$ generated by the classes $\pi^*\Nef(\tX)$ and the classes $\xi+\pi^*H$ such that both $H$ and $H+E$ are nef on $\tX$.  Recalling that $b>\mlx \geq \elx$ needs to be satisfied, we see  $\lh$ cannot be ample since $\eta^*L-bE$ never is. 
\end{remark} 

By \cite{KKM}  (see also \cite{BCHM,CL})
we know that $R(\xh,\lh)$ is finitely generated whenever 
$(\lh-K_{\xh})$ is big and nef. Since  
\[
K_{\xh} \equ - 2\xi +  \pi^*(K_{\tX} + \det (\OO\oplus\OO(E))) \equ -2\xi + \pi^*(\eta^*K_X+2E)\ ,  
\]
this amounts to verifying that  
\[
\lh - K_{\xh}  \equ  (b+2)\xi + \pi^*( \eta^*(L-K_X) - (b+2)E) 
\]
is big and nef. 

\begin{corollary}\label{cor:rationality of Seshadri}
With notation as above, assume that there exists a positive integer $b$ satsifying $\elx < b < \epsilon(L-K_X;x) -2$. Then $\elx$ is rational. 
\end{corollary}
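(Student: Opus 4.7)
The plan is to verify the hypothesis of Corollary~\ref{cor:fg implies Sehsadri rtl} by showing that the section ring $R(\xh,\lh)$ is finitely generated, which immediately gives $\elx\in\QQ$. By the result recalled just before the corollary (see \cite{KKM}, cf.\ \cite{BCHM,CL}), finite generation will follow once $\lh - K_{\xh}$ is shown to be big and nef on the smooth projective threefold $\xh$. Writing $M \deq L - K_X$, the identity
\[
\lh - K_{\xh} \equ (b+2)\xi + \pi^*\!\big(\eta^*M - (b+2)E\big)
\]
established just before the corollary is our starting point.

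For \emph{nefness} I would apply Remark~\ref{rmk:nef cone of projective bundle}. Set
\[
H \deq \tfrac{1}{b+2}\,\eta^* M - E \in N^1(\tX)_\RR,
\]
so that $\lh - K_{\xh} = (b+2)(\xi + \pi^* H)$. The hypothesis $b+2 < \epsilon(M;x)$ is exactly the statement that $\eta^* M - (b+2)E$ is nef, equivalently that $H$ is nef; the same hypothesis (applied with any small $t>0$ in place of $b+2$) also forces $\eta^*M$, and hence $H + E = \tfrac{1}{b+2}\eta^*M$, to be nef. Thus $\xi + \pi^* H$ is one of the generators of the nef cone of $\xh$ listed in Remark~\ref{rmk:nef cone of projective bundle}, so $\lh - K_{\xh}$ is nef.

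For \emph{bigness}, since the class is already nef it suffices to show $(\lh - K_{\xh})^3 > 0$. Using the Grothendieck relation $\xi^2 = \xi\cdot\pi^* E$ on the $\PP^1$-bundle $\xh$, the equality $E^2 = -1$ on $\tX$, and the projection-formula identity $\xi\cdot\pi^*(N^2)=N^2$ for any $N\in N^1(\tX)_\RR$, a direct computation yields
\[
(\lh - K_{\xh})^3 \equ (b+2)\big(3\,M^2 - (b+2)^2\big).
\]
The elementary Seshadri bound on the surface $X$, namely $\epsilon(M;x)\le\sqrt{M^2}$ (an immediate consequence of $(\eta^*M - tE)^2 = M^2 - t^2 \ge 0$ whenever $\eta^*M - tE$ is nef), together with $b+2 < \epsilon(M;x)$, yields $(b+2)^2 < M^2 \le 3M^2$, so the above self-intersection number is strictly positive. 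Combining nefness with $(\lh - K_{\xh})^3 > 0$ gives bigness, and invoking Corollary~\ref{cor:fg implies Sehsadri rtl} finishes the proof.

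I do not foresee any real obstacle beyond spotting the decomposition $H = \tfrac{1}{b+2}\eta^*M - E$; the one subtle point is to ensure that the chosen integer $b$ also satisfies $b > \mu(L;x)$, which is what is actually needed for the construction of $\lh$ preceding the corollary to make sense (this is automatic in the sharper range $\mlx < b < \epsilon(L-K_X;x)-2$ prescribed by Theorem~C).
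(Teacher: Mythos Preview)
Your approach is essentially the same as the paper's: show $\lh - K_{\xh}$ is big and nef, invoke \cite{KKM}/\cite{BCHM,CL} for finite generation, then apply Corollary~\ref{cor:fg implies Sehsadri rtl}. Your nefness check via Remark~\ref{rmk:nef cone of projective bundle} with $H=\tfrac{1}{b+2}\eta^*M-E$ is exactly what the paper does.

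Two remarks. First, the ``subtle point'' you flag at the end is not optional and must be dealt with \emph{inside} the proof: the construction of $\lh$ requires $b>\mu(L;x)$, whereas the hypothesis only gives $b>\elx$. The paper resolves this by the case split coming from Remark~\ref{rmk:KLM}: if $\elx<\mlx$ then $\elx$ is already rational, so one may assume $\elx=\mlx$, and then $b>\elx=\mlx$ as needed. You should insert this reduction at the start.

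Second, your bigness argument differs from the paper's: you compute $(\lh-K_{\xh})^3=(b+2)\big(3M^2-(b+2)^2\big)>0$ directly (the computation is correct), while the paper instead appeals to \cite[Lemma~2.3.2]{Laz04I} together with Remark~\ref{rmk:nef cone of projective bundle}, reducing to the statement that $\eta^*(L-K_X)-(b+2)E$ and $\eta^*(L-K_X)$ are both big and nef on $\tX$. Both routes work; yours is a bit more hands-on, the paper's avoids the intersection calculus on the bundle.
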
 
\begin{proof}
To begin with note that the condition includes the assumptions that both $L$ and $L-K_X$ are ample. 
If $\elx<\mlx$ then $\elx$ is automatically rational, therefore we can assume 
$\elx=\mlx$. We then have that $\ilx= {\rm vol}(\lh)$ with since $b> \mlx$ by  	
Theorem~\ref{thm:integrals are volumes}. It remains only to check that
\[
\lh-K_{\xh} \equ (b+2)\xi + \pi^*( \eta^*(L-K_X) - (b+2)E) \equ (b+2)\left(\xi + \frac{1}{b+2}\pi^*(\eta^*(L-K_X)-(b+2)E)\right)
\]
is big and nef, which,  by Remark~\ref{rmk:nef cone of projective bundle} and \cite[Lemma 2.3.2]{Laz04I} is certainly implied if 
$\eta^*(L-K_X) - (b+2)E$  and   $\eta^*(L-K_X) - (b+2)E + (b+2) E= \eta^*(L-K_X)$ are both big and nef.

By definition of $\epsilon(L- K_X, x)$, the condition $b+2< \epsilon(L-K_X,x)$ implies that the former is ample and the latter is big and nef. 
But then $R(\xh,\lh)$ is finitely generated by \cite{KKM} (see also \cite{BCHM,CL}), therefore $\elx\in\QQ$ by Corollary~\ref{cor:fg implies Sehsadri rtl}. 
\end{proof}

An immediate consequence is the rationality of Seshadri constants on surfaces with positive anticanonical class. The result below is not new, however, we obtain it without any specific knowledge about negative curves on the blow-up of $X$. 

\begin{remark}
Keeping the notation assume that $\epsilon(-K_X) \geq 3$. Then
\[
\epsilon(L-K_X;x) -2 \dgeq \elx + \epsilon(-K_X;x) + 2 \,>\, \elx +1\ ,
\]
hence there will exist an integer $b$ as in Corollary~\ref{cor:rationality of Seshadri}. Consequently, $\elx\in\QQ$. 
\end{remark}


{\footnotesize
	\bibliographystyle{plain}
	\bibliography{NOB}{}}

\end{document}